\theoremstyle{plain}\newtheorem{theorem}{Th\'eor\`eme}
\theoremstyle{plain}\newtheorem{theoreme}{Th\'eor\`eme}[section]
\theoremstyle{plain}\newtheorem{theo}{Th\'eor\`eme}
\theoremstyle{plain}\newtheorem{prop}[theoreme]{Proposition}
\theoremstyle{plain}\newtheorem{lem}[theoreme]{Lemme}
\theoremstyle{plain}
\theoremstyle{plain}
\theoremstyle{plain}
\theoremstyle{definition}\newtheorem{defi}[theoreme]{D\'efinition}
\theoremstyle{definition}\newtheorem{remarque}[theoreme]{Remarque}
\newcommand{\BibTeX}{{\scshape Bib}\kern-.08em\TeX}
\newcommand{\T}{\S\kern .15em\relax }
\newcommand{\AMS}{$\mathcal{A}$\kern-.1667em\lower.5ex\hbox
	{$\mathcal{M}$}\kern-.125em$\mathcal{S}$}
\title[MUM, congruences "à la Lucas" et indépendance algébrique]{Monodromie unipotente maximale, congruences "à la Lucas" et indépendance algébrique}
\date {}
\author{Daniel Vargas Montoya}
\address{Institut Camille Jordan, Universit\'e Claude Bernard Lyon 1, Batîment Braconnier, 21 Avenue Claude Bernard, 69100 Villeurbanne\\
}
\email{vargas@math.univ-lyon1.fr}
\keywords{Structure de Frobenius forte (Strong Frobenius structure), reduction modulo $p$ (reduction modulo $p$), indépendance algébrique (algebraic independence).\\
\textbf{Mathematics Subject Classification(2020)} 11E95, 11J85, 11T95, 12H25, 34M99.
}
\thanks{This project has received funding from the European Research Council (ERC) under the European Union's Horizon 2020 research and innovation programme under the Grant Agreement No 648132. }
\begin{document}

\begin{abstract}
 Soient $f(z)\in 1+z\mathbb{Q}[[z]]$ et $\mathcal{S}$ un ensemble infini de nombres premiers tels que, pour tout $p\in\mathcal{S}$, nous pouvons réduire $f(z)$ modulo $p$. 
 Lorsque $f(z)$ est holonome, on obtient g\'en\'eralement que $f(z)_{\mid p}$  est alg\'ebrique sur $\mathbb F_p(z)$. Si de plus  $f(z)_{\mid p}$  annule un polyn\^ome 
 de la forme $X-A_p(z)X^{p^l}$, on peut utiliser ces \'equations pour obtenir des r\'esultats de transcendance et d'ind\'ependance alg\'ebrique sur $\mathbb Q(z)$. 
 Dans cet article, nous cherchons des conditions sur les op\'erateurs diff\'erentiels annulant $f(z)$ qui  garantissent l'existence de ces \'equations particuli\`eres. 
 Supposons que $f(z)$ annule un opérateur différentiel $\mathcal{H}\in\mathbb{Q}(z)[d/dz]$ muni d'une structure de Frobenius forte pour tout $p\in\mathcal{S}$  ainsi qu'un opérateur différentiel fuchsien $\mathcal{D}\in\mathbb{Q}(z)[d/dz]$ tel que zéro est un point singulier régulier de $\mathcal{D}$ et les exposants en zéro de $\mathcal{D}$ sont tous égaux à zéro. Notre résultat principal établit que pour presque tout $p\in\mathcal{S}$, $f(z)_{\mid p}$  annule un polynôme de la forme $X-A_p(z)X^{p^l}$, où $A_p(z)$ est une fraction rationnelle à coefficients dans $\mathbb{F}_p$ de hauteur inférieure ou égale à $Cp^{2l}$ et $C$ est une constante strictement positive indépendante de $p$. Nous étudions aussi l'indépendance algébrique sur $\mathbb{Q}(z)$ de ces séries. 
\end{abstract}

\maketitle
\tableofcontents

\section{Introduction}

Soit $p$ un nombre premier. 
Dans cet article, nous suivrons \cite{allouche} en disant qu'une série formelle $f(z)=\sum_{n\geq0}a(n)z^n$ à coefficients dans $\mathbb{Q}$ est \emph{$p$-Lucas} si $f(z)\in\mathbb{Z}_{(p)}[[z]]$, où $\mathbb{Z}_{(p)}$ est la localisation de $\mathbb{Z}$ en l'idéal $(p)$, $a(0)=1$ et pour tout entier positif $m$ et tout $r\in\{0,\ldots, p-1\}$ on a 
$$a(r+mp)\equiv a(r)a(m)\mod p\mathbb{Z}_{(p)} \,.$$ 
Une observation importante est que $f(z)=\sum_{n\geq0}a(n)z^n\in1+z\mathbb{Z}_{(p)}[[z]]$ est $p$-Lucas si et seulement si 
\begin{equation}\label{eq: plucas}
f_{\mid p}(z)=A_p(z)f_{\mid p}(z^p)\,,
\end{equation} 
où $f_{\mid p}(z)=\sum_{n\geq0}(a(n)\mod p)z^n$ d\'esigne la r\'eduction modulo $p$ de $f(z)$ et $A_p(z)=\sum_{n=0}^{p-1}(a(n)\mod p)z^n$. 
Puisque $f_{\mid p}(z^p)=f_{\mid p}(z)^p$, la s\'erie formelle $f_{\mid p}(z)$ est alors alg\'ebrique sur $\mathbb F_p(z)$ de degr\'e au plus $p-1$. 
On trouve dans la litt\'erature de nombreux exemples de s\'eries 
formelles v\'erifiant de telles congruences pour tout nombre premier $p$, ou au moins pour une infinit\'e d'entre eux.  
Par exemple, le th\'eor\`eme de Lucas sur les coefficients binomiaux implique que, pour tout entier $r\geq 1$, la s\'erie formelle 
$$
\mathfrak{g}_r(z)= \sum_{n\geq0}\binom{2n}{n}^r z^n
$$
est $p$-Lucas pour tout nombre premier $p$. 
Gessel \cite{gessel} a montr\'e que c'est \'egalement le cas de la série génératrice des nombres d'Apéry $$\mathfrak{t}(z)=\sum_{n\geq0}\left(\sum_{k=0}^n\binom{n}{k}^2\binom{n+k}{k}^2\right)z^n\,.$$
Nous renvoyons \`a \cite{apery,lucas,Borisgfonct} pour davantage d'exemples.  
Notons que l'approche utilisée dans \cite{gessel}, \cite{apery} et \cite{lucas} est combinatoire, 
tandis que les auteurs de \cite{Borisgfonct} \'etudie la valuation $p$-adique des coefficients des séries hypergéométriques généralisées et de quotients de factorielles de plusieurs variables.  

En 1989, Sharif et Woodcock \cite{transcedencia} ont remarqu\'e que les \'equations~\eqref{eq: plucas} peuvent \^etre utilis\'ees pour obtenir la transcendance 
sur $\mathbb Q(z)$ de certaines s\'eries $p$-Lucas, en montrant que leurs r\'eductions modulo $p$ sont des s\'eries formelles alg\'ebriques sur $\mathbb F_p(z)$ dont le degr\'e n'est pas born\'e en fonction de $p$. Ils d\'emontrent ainsi la transcendance des s\'eries $\mathfrak g_r(z)$ pour $r\geq 2$\footnote{Notons que $\mathfrak g_1(z)$ est alg\'ebrique.}.  
Cette approche a ensuite \'et\'e \'etendue par Allouche, Gouyou-Beauchamps et Skordev dans \cite{allouche}.  Plus r\'ecemment, Adamczewski et Bell \cite{AB13} ont 
montr\'e comment utiliser ces m\^emes \'equations pour d\'emontrer l'ind\'ependance alg\'ebrique sur $\mathbb Q(z)$ de certaines s\'eries $p$-Lucas. 
Cette approche a ensuite \'et\'e d\'evelopp\'ee par Adamczewski, Bell et Delaygue dans \cite{Borisgfonct}. Ces auteurs ont  introduit de nouveaux ensembles de s\'eries formelles, les ensembles $\mathcal{L}(\mathcal{S})$, qui g\'en\'eralisent les ensembles de s\'eries $p$-Lucas, et ils ont donn\'e un critère d'indépendance algébrique pour les \'el\'ements de $\mathcal{L}(\mathcal{S})$. Ce crit\`ere permet par exemple de d\'emontrer que les s\'eries $\mathfrak{g}_r$, $r\geq 2$, et $\mathfrak{t}(z)$ 
sont alg\'ebriquement ind\'ependantes sur $\mathbb Q(z)$. 

\begin{defi}\label{pluca}
	Pour un ensemble $\mathcal{S}$ infini de nombres premiers, $\mathcal{L}(\mathcal{S})$ est l'ensemble des séries $f(z)\in 1+z\mathbb{Q}[[z]]$ telles que pour tout $p\in\mathcal{S}$:
	\begin{enumerate}[label=\alph*)]
		\item $f(z)\in\mathbb{Z}_{(p)}[[z]]$.
		\item Il existe un entier $l_p$ strictement positif et une fraction rationnelle $A_p(z)$ dans $\mathbb{F}_p(z)\cap\mathbb{F}_p[[z]]$ tels que
		\begin{equation}\label{àdeux}
		f_{\mid p}(z)=A_p(z)f_{\mid p}(z^{p^{l_p}}),
		\end{equation}
		où $f_{\mid p}(z)$ est la réduction de $f(z)$ modulo $p$.  
		\item  La hauteur\footnote{Si nous écrivons $A_p(z)=P(z)/Q(z)$, où $P(z),Q(z)$ sont des polynômes premiers entre eux, la hauteur de $A_p(z)$ est le maximum des degrés de $P(z)$ et $Q(z)$.} de $A_p(z)$ est inférieure ou égale à $Cp^{l_p}$, où $C$ est une constante qui ne dépend pas de $p$. 
	\end{enumerate}
\end{defi}
Il est clair que si, pour tout $p\in\mathcal{S}$, la série $f(z)$ est $p$-Lucas, alors $f(z)\in\mathcal{L}(\mathcal{S})$. Le crit\`ere d'ind\'ependance alg\'ebrique 
d'Adamczewski, Bell et Delaygue s'\'ennonce de la fa\c con suivante. 

\begin{theorem}[Théorème 1.3 de \cite{Borisgfonct}]\label{5.1}
	Soit $\mathcal{S}$ un ensemble infini de nombres premiers. Des s\'eries formelles $f_1(z),\ldots, f_r(z)\in\mathcal{L}(\mathcal{S})$ 
	sont algébriquement dépendantes sur $\mathbb{Q}(z)$ si et seulement si il existe $m_1,\ldots, m_r\in\mathbb{Z}$, non tous nuls, tels que $f_1(z)^{m_1}\cdots f_r(z)^{m_r}\in\mathbb{Q}(z).$ 	
\end{theorem}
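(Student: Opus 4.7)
L'implication "si" est immédiate: si $f_1^{m_1}\cdots f_r^{m_r}=R(z)\in\mathbb{Q}(z)$ pour des entiers $m_i$ non tous nuls, en séparant exposants positifs et négatifs on obtient une relation polynomiale non triviale entre les $f_i$ sur $\mathbb{Q}(z)$.

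Pour la réciproque, fixons un polynôme non nul $P(z,X_1,\ldots,X_r)=\sum_{\mathbf{k}\in I}c_{\mathbf{k}}(z)X_1^{k_1}\cdots X_r^{k_r}$, à support fini $I\subset\mathbb{Z}_{\geq 0}^r$, annulant $(f_1,\ldots,f_r)$, de degré total $d_P$. Mon plan est de plonger les $f_{i,\mid p}$ dans une extension de Kummer pour chaque $p\in\mathcal{S}$, d'en extraire une relation monomiale à exposants uniformément bornés, puis de remonter en caractéristique zéro par un principe des tiroirs et un critère de rationalité. Pour $p\in\mathcal{S}$, soit $L_p=\mathrm{ppcm}(l_{1,p},\ldots,l_{r,p})$, $q_p=p^{L_p}$, et $s_i=L_p/l_{i,p}$. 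En itérant la relation (b) de la Définition~\ref{pluca} et en utilisant $g(z^p)=g(z)^p$ pour $g\in\mathbb{F}_p[[z]]$, on obtient $f_{i,\mid p}^{q_p-1}=1/B_{i,p}(z)$ avec $B_{i,p}(z)=\prod_{j=0}^{s_i-1}A_{i,p}(z^{p^{jl_{i,p}}})$, de hauteur $O(q_p)$ grâce à la condition c) de la Définition~\ref{pluca}. En particulier, $K_p=\mathbb{F}_p(z)(f_{1,\mid p},\ldots,f_{r,\mid p})$ est une extension de Kummer de $\mathbb{F}_p(z)$ d'exposant divisant $q_p-1$, dans laquelle chaque monôme $M_{\mathbf{k}}=\prod_i f_{i,\mid p}^{k_i}$ est un élément de Kummer. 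Par indépendance linéaire des caractères d'Artin (appliquée après extension des constantes à $\mathbb{F}_{q_p}$, ce qui est licite puisque les éléments concernés restent à coefficients dans $\mathbb{F}_p$), deux tels monômes appartenant à des classes distinctes dans $K_p^{\ast}/\mathbb{F}_p(z)^{\ast}$ sont $\mathbb{F}_p(z)$-linéairement indépendants.

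Pour presque tout $p\in\mathcal{S}$, la réduction modulo $p$ de $P$ reste non nulle, fournissant une relation $\sum_{\mathbf{k}\in I}\overline{c_{\mathbf{k}}}(z)M_{\mathbf{k}}=0$ à coefficients non tous nuls. Par ce qui précède, les monômes $M_{\mathbf{k}}$ à coefficient non nul ne peuvent tous être dans des classes distinctes modulo $\mathbb{F}_p(z)^{\ast}$: il existe donc $\mathbf{k}\neq\mathbf{k}'$ dans $I$ tels que $\prod_i f_{i,\mid p}^{k_i-k_i'}\in\mathbb{F}_p(z)^{\ast}$. Le vecteur $\mathbf{k}-\mathbf{k}'\in\mathbb{Z}^r\setminus\{0\}$ a ses composantes majorées en valeur absolue par $d_P$. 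L'ensemble $[-d_P,d_P]^r\setminus\{0\}$ étant fini, le principe des tiroirs fournit un vecteur $(m_1,\ldots,m_r)$ tel que $F_p:=\prod_i f_{i,\mid p}^{m_i}\in\mathbb{F}_p(z)$ pour une infinité de $p\in\mathcal{S}$.

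Il reste à remonter en caractéristique zéro. L'identité $F_p^{q_p-1}=\prod_i B_{i,p}^{-m_i}$, combinée à la borne $O(q_p)$ sur la hauteur des $B_{i,p}$ et à $|m_i|\leq d_P$, donne une hauteur $O(q_p)$ pour $F_p^{q_p-1}$; comme $p\nmid q_p-1$, on en déduit que la hauteur de $F_p$ est $O(1)$, indépendamment de $p$. Puisque chaque $f_i\in 1+z\mathbb{Q}[[z]]$ est inversible dans $\mathbb{Q}[[z]]$, la série $F(z)=\prod_i f_i(z)^{m_i}$ est bien définie dans $1+z\mathbb{Q}[[z]]$, et $F_p$ coïncide avec $F_{\mid p}$. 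Un critère de rationalité classique (une série formelle à coefficients rationnels dont les réductions modulo une infinité de premiers sont des fractions rationnelles de hauteur uniformément bornée est elle-même rationnelle, ce que l'on établit par annulation des déterminants de Hankel) donne enfin $F(z)\in\mathbb{Q}(z)$. L'obstacle principal réside dans la mise en place de la structure de Kummer modulo $p$ et l'argument d'indépendance des caractères, qui converti la dépendance algébrique en caractéristique zéro en une relation multiplicative à exposants bornés; le contrôle uniforme de la hauteur et le critère de rationalité final sont plus standards mais essentiels pour conclure.
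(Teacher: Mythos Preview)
The paper does not prove this theorem: it is quoted verbatim as Th\'eor\`eme~1.3 of Adamczewski--Bell--Delaygue \cite{Borisgfonct} and used as a black box. Your sketch is essentially correct and recovers the approach of \cite{Borisgfonct}; one can see the same skeleton in the paper's proof of Proposition~\ref{indalg}, which invokes Proposition~5.3 of \cite{Borisgfonct} exactly where you run the Kummer/character argument to produce a bounded multiplicative relation modulo $p$, and Lemme~5.4 of \cite{Borisgfonct} where you appeal to the Hankel-determinant rationality criterion to lift to characteristic zero.

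One small point worth tightening: after extending constants to $\mathbb{F}_{q_p}$ you obtain $\prod_i f_{i,\mid p}^{k_i-k_i'}\in\mathbb{F}_{q_p}(z)^{\ast}$, and you then want it in $\mathbb{F}_p(z)^{\ast}$. Your parenthetical remark is right but could be made explicit: since each $f_{i,\mid p}\in 1+z\mathbb{F}_p[[z]]$, the quotient lies in $\mathbb{F}_p((z))\cap\mathbb{F}_{q_p}(z)=\mathbb{F}_p(z)$ (apply the arithmetic Frobenius fixing $z$). Likewise, the passage from a height bound $O(q_p)$ on $F_p^{\,q_p-1}$ to a height bound $O(1)$ on $F_p$ uses that if $F_p=P/Q$ is reduced then $P^{q_p-1}/Q^{q_p-1}$ is still reduced, so the height scales exactly by $q_p-1$; this is implicit in your argument but worth stating.
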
 

Bien que cela n'apparaisse pas dans les d\'efinitions correspondantes, le cadre naturel d'\'etude des s\'eries $p$-Lucas et des \'el\'ements de $\mathcal L(\mathcal S)$ est celui des 
s\'eries holonomes, c'est-\`a-dire des solutions d'\'equations diff\'erentielles lin\'eaires \`a coefficients polyn\^omes. En effet, les exemples connus d'\'el\'ements de  $\mathcal L(\mathcal S)$ sont dans une immense majorit\'e de ce type (voir toutefois l'exemple donn\'e \`a la fin de \cite{allouche} pour une exception \`a cette r\`egle). 
Soit $f(z)\in \mathbb{Q}[[z]]$ holonome telle que, pour tout $p\in\mathcal{S}$, la série $f(z)$ est dans $\mathbb{Z}_{(p)}[[z]]$. Motiv\'e par le th\'eor\`eme A, 
nous cherchons dans cet article des conditions sur les op\'erateurs diff\'erentiels annulant $f(z)$ qui garantissent  que $f(z)$ appartient à $\mathcal{L}(\mathcal{S})$. 

Notons d'abord que, d'apr\`es l'équation \eqref{àdeux}, si  $f(z) \in\mathcal{L}(\mathcal{S})$, alors $f_{\mid p}(z)$ est algébrique 
sur $\mathbb{F}_p(z)$ pour tout $p\in\mathcal{S}$. Cela nous conduit \`a faire appel à la notion, introduite par Dwork dans \cite{dworksFf}, 
de \emph{structure de Frobenius forte} associ\'ee \`a  un opérateur différentiel et un nombre premier $p$ (voir d\'efinition~\ref{def:SFF}). Rappelons que les opérateurs de \emph{Picard-Fuchs} sont des exemples d'opérateurs qui possèdent une structure de Frobenius forte pour presque tout nombre premier $p$, ce qui montre la pertinence de cette notion pour notre \'etude. 

\begin{defi}\label{FS}
Pour un ensemble $\mathcal S$ de nombres premiers,  $\mathcal{F}(\mathcal{S})$ est l'ensembles des séries $f(z)\in 1+z\mathbb{Q}[[z]]$ telles que: 
\begin{itemize}
	\item pour tout $p\in\mathcal{S}$, $f(z)\in\mathbb{Z}_{(p)}[[z]]$;
	\item $f(z)$ annule un opérateur différentiel $\mathcal{H}\in\mathbb{Q}(z)[\delta]$ muni d'une structure de Frobenius forte pour tout $p\in\mathcal{S}$, où $\delta=zd/dz$. 
\end{itemize}
\end{defi}

Les travaux de Christol \cite{Gillesalgebriques} 
(voir aussi \cite[Th\'eor\`eme 2.6]{vmsff}) implique que si $f(z)\in\mathcal{F}(\mathcal{S})$ alors la série $f_{\mid p}(z)$ est algébrique sur $\mathbb{F}_p(z)$ pour tout $p\in\mathcal{S}$. Nous cherchons donc une condition suppl\'ementaire \`a ajouter aux \'el\'ements de $\mathcal{F}(\mathcal{S})$ afin de garantir leur appartenance \`a 
$\mathcal L(\mathcal S)$. Dans cette direction, le  théorème~\ref{practico} établit que si $f(z)\in\mathcal{F}(\mathcal{S})$ annule un opérateur différentiel $\mathcal{D}\in\mathbb{Q}(z)[d/dz]$ MOM en zéro,\footnote{Le lecteur trouvera la définition d'opérateur différentiel MOM en zéro dans la partie~\ref{opérateurs différentiels}.} alors la série $f(z)$ vérifie une équation du type~\eqref{àdeux} pour presque tout $p\in\mathcal{S}$. 
Par contre, et de fa\c con assez surprenante, la condition $c$ de la d\'efinition des ensembles $\mathcal L(\mathcal S)$ n'est pas toujours v\'erifi\'ee. 
On obtient seulement que la hauteur de $A_p(z)$ est inférieure ou égale à $Cp^{2l_p}$, où $C$ est une constante indépendante de $p$.
% et $l_p$ est un entier strictement positif qui dépend de $p$. 
La deuxième partie du théorème~\ref{practico} donne ensuite une condition suffisante pour que $f(z)$ appartienne à $\mathcal{L}(\mathcal{S})$. 

Avant d'énoncer le théorème~\ref{practico}, nous introduisons l'ensemble $\mathcal{L}^2(\mathcal{S})$ et les op\'erateurs $\Lambda_p$.

\begin{defi}\label{L2}
	Soit $\mathcal{S}$ un ensemble infini de nombres premiers.  L'ensemble $\mathcal{L}^2(\mathcal{S})$ est constitué des séries $f(z)\in 1+z\mathbb{Q}[[z]]$ telles que, pour tout $p\in\mathcal{S}$, les conditions a) et b) de la définition~\ref{pluca} sont vérifiées et
	
	c') la hauteur de $A_p(z)$ est inférieure ou égale à $Cp^{2l_p}$, où $C$ est une constante qui ne dépend pas de $p$. 
\end{defi}
Il est clair que $\mathcal{L}(\mathcal{S})\subset\mathcal{L}^2(\mathcal{S})$.

\begin{defi}
	Soient $p$ un nombre premier et $K$ un corps quelconque. Nous notons $\Lambda_p$ l'opérateur de $K[[z]]$  
	défini par: $\Lambda_p(\sum_{n\geq0}a(n)z^n)=\sum_{n\geq0}a(np)z^n$. Pour un entier $k\geq1$, $\Lambda^k_p=\Lambda_p\circ\cdots\circ\Lambda_p$ $k$-fois et $\Lambda^0_p=Id$ est l'opérateur identité.
\end{defi}

Par abus de langage, les op\'erateurs $\Lambda_p$ seront appel\'es op\'erateurs de Cartier dans la suite\footnote{Voir la discussion dans \cite[Section 2]{AY} concernant cet abus de langage.}.

\begin{theo}\label{practico} Soient $\mathcal{S}$ un ensemble infini de nombres premiers et $f(z)$ une série qui appartient à $\mathcal{F}(\mathcal{S})$. 
	\begin{enumerate}
		\item[${\rm (1)}$] Si $f(z)$ annule un opérateur différentiel $\mathcal{D}\in\mathbb{Q}(z)[d/dz]$ MOM en zéro alors $f(z)\in\mathcal{L}^2(\mathcal{S}')$, où $\mathcal{S}'\subset\mathcal{S}$ et $\mathcal{S}\setminus\mathcal{S}'$ est fini.
		\item[${\rm (2)}$]  Si de plus, pour tout $p\in\mathcal{S}$, il existe un entier $l_p>0$ tel que $\Lambda^{l_p}_p(f_{\mid p}(z))=f_{\mid p}(z)$ alors $f(z)\in\mathcal{L}(\mathcal{S}')$, où $\mathcal{S}'\subset\mathcal{S}$ et $\mathcal{S}\setminus\mathcal{S}'$ est fini.
	\end{enumerate}	
\end{theo}

Malgr\'e son apparente complexit\'e, ce th\'eor\`eme fournit un moyen tr\`es efficace pour montrer que des s\'eries formelles appartiennent \`a $\mathcal{L}(\mathcal{S})$. 
On peut par exemple l'utiliser pour montrer que les s\'eries formelles $\mathfrak{g}_r(z)$, $r\geq 1$, et $\mathfrak{t}(z)$ sont dans $\mathcal{L}(\mathcal{P}\setminus\mathcal{J})$, où 
$\mathcal{P}$ est l'ensemble des nombres premiers et $\mathcal J$ est un ensemble fini de nombres premiers. De fa\c con plus remarquable, 
ce r\'esultat permet d'\'etudier les solutions d'op\'erateurs de type Calabi-Yau. Dans \cite{calabi}, les auteurs dressent une liste de plus de 400 opérateurs de type Calabi-Yau. 
Ces opérateurs vérifient certaines conditions algébriques dont: zéro est un point singulier régulier et les exposants en z\'ero sont tous égaux à zéro et chaque opérateur admet une solution dans $\mathbb{Z}[[z]]$ dont le terme constant est \'egal \`a 1. Notamment, tous ces opérateur sont MOM en zéro. 
Dans la partie~\ref{eqcalabi}, nous montrons que 242 de ces séries appartiennent à $\mathcal{L}(\mathcal{P}\setminus\mathcal J)$, où $\mathcal{J}$ est un ensemble fini de nombres premiers.

Dans la partie~\ref{exemple} nous montrons via un exemple que le fait que $f(z)$ annule un opérateur différentiel MOM en zéro n'est pas une condition suffisante pour que $f(z)$ soit dans $\mathcal{L}(S)$. Plus précisément, nous montrerons que la série hypergéométrique $$\mathfrak{f}_2(z)=\sum_{n\geq0}\frac{-1}{2n-1}\binom{2n}{n}^2z^n$$ n'appartient pas à $\mathcal{L}(\mathcal{S})$ quel que soit l'ensemble infini $\mathcal{S}$ de nombres premiers alors que $\mathfrak{f}_2(z)$ annule un opérateur différentiel MOM en zéro. On a toutefois que $\mathfrak{f}_2(z)\in\mathcal{L}^2(\mathcal{P}\setminus\{2\})$.  
Un autre problème intéressant à étudier est celui de l'indépendance algébrique sur $\mathbb{Q}(z)$ des séries qui appartiennent à $\mathcal{L}^2(\mathcal{S})$. Bien que nous ne donnions pas une réponse générale à ce problème, les résultats que nous obtenons dans la partie~\ref{independaalgebrique} nous permettent d'étudier l'indépendance algébrique de certaines séries qui sont dans $\mathcal{L}^2(\mathcal{S})$.  

\begin{theo}\label{independece}
	Les séries hypergéométriques $\{\mathfrak{f}_r\}_{r\geq2}$, où $$\mathfrak{f}_r(z)=\sum_{n\geq0}\frac{-1}{(2n-1)}\binom{2n}{n}^rz^n,$$ sont algébriquement indépendantes sur $\mathbb{Q}(z)$.
\end{theo}

La pertinence de ce r\'esultat vient du fait que le théorème~\ref{5.1} ne peut pas être appliqué aux séries $\mathfrak{f}_r(z)$.

Cet article est organisé de la manière suivante. Dans la partie~\ref{opérateurs différentiels} nous rappelons la définition d'opérateur différentiel fuchsien et définissons pour un opérateur différentiel à coefficients dans $K(z)$, $K$ un corps quelconque, la notion d'opérateur différentiel MOM en zéro. Dans la partie~\ref{210} nous montrerons, grâce au point 2 du théorème~\ref{practico}, que la série 210 de \cite{calabi} appartient à $\mathcal{L}(\mathcal{S})$, où $\mathcal{S}$ est un ensemble infini de nombres premiers. La pertinence de cet exemple provient du fait que les méthodes combinatoires et les techniques développées dans \cite[section 8]{Borisgfonct} ne parviennent pas encore à montrer que cette série appartient à $\mathcal{L}(\mathcal{S})$. La preuve du théorème~\ref{practico} repose sur le lemme ~\ref{ordre1}. Notre approche de la démonstration du lemme~\ref{ordre1} est fondée sur la théorie des équations différentielles $p$-adiques. Dans la partie~\ref{enonce}, nous démontrons le théorème~\ref{practico}, en admettant le lemme~\ref{ordre1}. Celui-ci est prouvé ultérieurement dans la partie~\ref{demordre1}. Également, le lecteur trouvera dans cette partie la définition de \emph{structure de Frobenius forte} (voir d\'efinition~\ref{def:SFF}). La partie~\ref{exemple} est consacrée à montrer que la série hypergéométrique $\mathfrak{f}_{2}(z)$ appartient à $\mathcal{L}^2(\mathcal{S})\setminus\mathcal{L}(\mathcal{S})$, quel que soit l'ensemble infini $\mathcal{S}$ de nombres premiers. Dans la partie~\ref{eqcalabi}, nous appliquons le  théorème~\ref{practico} \`a l'\'etude des solutions d'op\'erateurs diff\'erentiels de type Calabi-Yau. Enfin, la partie~\ref{independaalgebrique} est dédiée à l'étude de l'indépendance algébrique de certaines séries de $\mathcal{L}^2(\mathcal{S})$ et nous démontrerons le théorème~\ref{independece}. \medskip

\textbf{Remerciements.} L'auteur tient à remercier l'arbitre pour sa lecture attentive ainsi que pour ses divers commentaires qui ont clarifié l'exposition.

\section{Opérateurs différentiels}\label{opérateurs différentiels}
Dans cette partie nous allons rappeler quelques notions classiques des opérateurs différentiels, la lettre $K$ désigne un corps quelconque. Un opérateur différentiel à coefficients dans $K(z)$ est un objet de la forme
\begin{equation}\label{L}
L:=\frac{d^n}{dz^n}+a_{1}(z)\frac{d^{n-1}}{dz^{n-1}}+\cdots+a_{n-1}(z)\frac{d}{dz}+a_{n}(z),
\end{equation} 
où les $a_i(z)$ appartiennent à $K(z)$. \emph{L'ordre} de $L$ est $n$. Un point $\alpha$ dans la clôture algébrique de $K$ est un point \emph{singulier} de $L$ s'il existe $i\in\{1,\ldots,n\}$ tel que $\alpha$ est un pôle pour $a_i(z)$. Nous disons que $\alpha$ est un point \emph{singulier régulier} de $L$ si $\alpha$ est un point singulier de $L$ et, pour tout $i\in\{1,\ldots,n\}$, $\widetilde{a}_i(z):=(z-\alpha)^ia_i(z)$ n'a pas de pôle en $\alpha$. %Soit $\alpha$ un point singulier régulier de $L$. Nous définissons les \emph{exposants} en $\alpha$ de $L$ comme les racines du polynôme \emph{indiciel}  $$x(x-1)\cdots(x-n+1)+x(x-1)\cdots(x-n+2)\widetilde{a}_1(\alpha)+\cdots+x\widetilde{a}_{n-1}(\alpha)+\widetilde{a}_n(\alpha).$$
Soit $L_{\infty}$ l'opérateur différentiel obtenu après avoir appliqué le changement de variable $z\mapsto1/z$ à $L$. Nous disons que l'infini est un point \emph{singulier régulier} de $L$ si zéro est un point singulier régulier de $L_{\infty}$. %Si l'infini est un point singulier régulier de $L$ nous définissons les exposants en l'infini de $L$ comme les exposants en zéro de $L_{\infty}$. 
Remarquons que si, pour chaque $i\in\{1,\ldots,n\}$, nous écrivons $a_i(z)=\frac{b_{i,1}(z)}{b_{i,2}(z)}$, où $b_{i,1}(z)$ et $b_{i,2}(z)$ appartiennent à $K[z]$ et sont premiers entre eux alors, l'infini est un point singulier régulier de $L$ si et seulement si, pour tout $i\in\{1,\ldots,n\}$, $deg(b_{i,1}(z))\leq deg(b_{i,2}(z))-i$. 

Nous dirons que $\alpha$ est une \emph{singularité à distance finie} de $L$ si $\alpha$ est un point singulier de $L$ et $\alpha$ n'est pas l'infini. Soit $\delta=z\frac{d}{dz}$ l'opérateur d'Euler. Nous réécrivons $z^nL$ en fonction de $\delta$ et obtenons $$L_{\delta}:=\delta^n+b_1(z)\delta^{n-1}+\cdots+b_{n-1}(z)\delta+b_n(z),$$
où les $b_i(z)$ appartiennent à $K(z)$. Comme nous le verrons dans la proposition suivante, si zéro est un point singulier régulier de $L$ alors pour tout $i\in\{1,\ldots,n\}$, $b_i(z)\in K[[z]]$. Ainsi, lorsque zéro est un point singulier régulier de $L$ nous définissons les \emph{exposants} en zéro de $L$ comme les racines du polynôme $$x^n+b_1(0)x^{n-1}+\cdots+b_{n-1}(0)x+b_n(0).$$ 
Ce polynôme est connu comme le polynôme \emph{indiciel} en zéro de $L$. Si l'infini est un point singulier régulier de $L$, nous définissons les \emph{exposants} en l'infini de $L$ comme les exposants en zéro de $L_{\infty}$.

\begin{remarque}\label{rem_g_n}
Soit $A(z)$ la matrice compagnon de $L$ et soit $B(z)$ la matrice compagnon de $L_{\delta}$. Alors,  $A(z)G_n=\frac{d}{dz}G_n+G_n\frac{1}{z}B(z)$, où \[
	G_n=\begin{pmatrix}
	1 & 0 & 0 &  \dots  & 0\\
	0 & \frac{1}{z} & 0 &  \dots & 0\\
	0 & \frac{c_{1,2}}{z^{2}} & \frac{1}{z^{2}} &  \dots & 0\\ 
	\vdots & \vdots & \vdots &\ddots  &\vdots\\
	0 & \frac{c_{1,j-1}}{z^{j-1}} &  \frac{c_{2,j-1}}{z^{j-1}} &\dots & \frac{1}{z^{j-1}}
	\end{pmatrix}\in {\rm GL}_n(\mathbb{Z}[1/z])
	\]
	est la matrice exprimant $\{1,\dots,\frac{d^{n-1}}{dz^{n-1}}\}$ en fonction de $\{1,\delta,\ldots,\delta^{n-1}\}$. Le lecteur trouvera une preuve de cela dans \cite[Lemme 5.3]{vmsff}.
\end{remarque}

\begin{prop}\label{Fuchs}
	Soient $K$ un corps quelconque et $L$ défini comme en \eqref{L}. Le point zéro est un point singulier régulier de $L$ si et seulement si, pour tout $i\in\{1,\ldots,n\}$, $b_i(z)$ appartient à $K[[z]]$.
\end{prop}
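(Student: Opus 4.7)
The plan is to rewrite $z^n L$ explicitly in terms of $\delta = z\,d/dz$ and extract a triangular relation between the $b_k$'s and the $z^i a_i$'s. The backbone of the argument is the classical identity
\[
z^k \frac{d^k}{dz^k} \;=\; \delta(\delta-1)(\delta-2)\cdots(\delta-k+1) \;=:\; [\delta]_k,
\]
valid for every $k\geq 0$ (with $[\delta]_0 := 1$), and proved by an easy induction on $k$ using the commutation $\frac{d^k}{dz^k}\cdot z = z\frac{d^k}{dz^k} + k\frac{d^{k-1}}{dz^{k-1}}$ (a direct application of Leibniz).

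Setting $a_0(z) := 1$, multiplying $L$ by $z^n$ and factoring $z^n = z^i\cdot z^{n-i}$ in each summand, one obtains
\[
L_\delta \;=\; z^n L \;=\; \sum_{i=0}^{n} \bigl(z^i a_i(z)\bigr)\,[\delta]_{n-i}.
\]
Each $[\delta]_m$ is a monic polynomial in $\delta$ of degree $m$ with integer coefficients; writing $[\delta]_m = \sum_{j=0}^{m} c_{m,j}\,\delta^j$ with $c_{m,m}=1$, and collecting powers of $\delta$, one gets, for each $k \in \{1,\ldots,n\}$,
\[
b_k(z) \;=\; c_{n,n-k} \;+\; \sum_{i=1}^{k} c_{n-i,n-k}\,z^i a_i(z),
\]
with in particular $c_{n-k,n-k}=1$. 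Thus the tuple $(b_1,\ldots,b_n)$ is the image of $(za_1,z^2a_2,\ldots,z^n a_n)$ under a fixed unipotent lower-triangular matrix with integer entries (plus a fixed integer vector).

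The equivalence now follows at once. For the direct implication, if zero is a regular singular point of $L$, then by definition each $\widetilde{a}_i(z) = z^i a_i(z)$ is regular at $0$ and hence lies in $K[[z]]$; the displayed formula then forces $b_k \in K[[z]]$ for every $k$. Conversely, if every $b_k$ lies in $K[[z]]$, one recovers $z^i a_i \in K[[z]]$ by induction on $i$: the base case $i=1$ gives $z a_1(z) = b_1(z) - c_{n,n-1}$, and at stage $k>1$ the relation above can be solved for $z^k a_k(z)$ in terms of $b_k(z)$ and the previously treated $z^i a_i(z)$ with $i<k$, precisely because the coefficient of $z^k a_k(z)$ is $c_{n-k,n-k}=1$.

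There is no real obstacle: the only technical point is the bookkeeping leading to the explicit expression for $b_k$, and once the resulting linear system is seen to be triangular with $1$'s on the diagonal, inversion is routine. If one prefers, the same conclusion can be reached by reading off the remark on the change-of-basis matrix $G_n$, whose entries are polynomials in $1/z$ with integer coefficients and whose diagonal consists of $1/z^{j-1}$, but the direct computation above is more transparent.
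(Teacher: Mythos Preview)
Your argument is correct. The identity $z^k\,d^k/dz^k = \delta(\delta-1)\cdots(\delta-k+1)$ holds over any commutative ring (the coefficients are signed Stirling numbers, hence integers), and your extraction of the triangular relation
\[
b_k(z) \;=\; c_{n,n-k} \;+\; \sum_{i=1}^{k} c_{n-i,\,n-k}\,z^i a_i(z), \qquad c_{n-k,\,n-k}=1,
\]
is exactly right. Since this unipotent system is invertible over $\mathbb{Z}$, the equivalence between ``$z^i a_i(z)$ regular at $0$ for all $i$'' and ``$b_k(z)\in K[[z]]$ for all $k$'' follows without any hypothesis on the characteristic.

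As for the comparison: the paper does not actually supply its own proof of this proposition. It simply records that for $K=\mathbb{C}$ this is the classical Fuchs criterion and refers to \cite[corollaire~5.5]{Singer}. Your write-up therefore goes further than the paper here, giving a self-contained argument that is manifestly characteristic-free. The alternative you mention at the end --- reading the statement off the change-of-basis matrix $G_n$ of Remarque~\ref{rem_g_n} --- is in the same spirit: that matrix has entries in $\mathbb{Z}[1/z]$ with $1/z^{j-1}$ on the diagonal, and conjugating by it encodes exactly the same unipotent transformation between $(z a_1,\ldots,z^n a_n)$ and $(b_1,\ldots,b_n)$.

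One small caveat, which concerns the \emph{statement} rather than your proof: with the paper's convention that ``point singulier r\'egulier'' presupposes that the point is singular, the converse direction literally shows only that $0$ is either an ordinary point or a regular singular point. This is a standard abuse in the literature and does not affect any later use of the proposition.
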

 %Nous montrerons cette proposition à la fin de cette partie.
 Lorsque $K=\mathbb{C}$, ce résultat est le critère de Fuchs dont on trouve une démonstration dans \cite[corollaire~5.5]{Singer}.
Maintenant rappelons la définition d'opérateur fuchsien.
\begin{defi}[Opérateur fuchsien]
	Soient $K$ un corps quelconque et $L$ défini comme en \eqref{L}. L'opérateur $L$ est \emph{fuchsien} si l'infini est un point singulier régulier de $L$ et si toutes les singularités à distance finie de $L$ sont des points singuliers réguliers de $L$. Par abus de langage nous dirons que l'opérateur différentiel $L_{\delta}$ est fuchsien si, lorsque nous réécrivons l'opérateur $\frac{1}{z^n}L_{\delta}$ en fonction de $d/dz$, il est fuchsien.
\end{defi}

Dans la suite de l'article, nous utiliserons à plusieurs reprise la notion de MOM en zéro, notion algébrique définie ci-dessous qui généralise la notion de monodromie unipotente maximale lorsque $K=\mathbb{C}$.

\begin{defi}[Opérateur MOM en zéro]
	Soient $K$ un corps quelconque et $L$ défini comme en \eqref{L}. L'opérateur différentiel $L$ est MOM\footnote{L'abréviation MOM en anglais signifie maximal order multiplicity.} en zéro si: $L$ est fuchsien, zéro est un point singulier régulier de $L$ et tous les exposants en zéro de $L$ sont égaux à zéro.
\end{defi}

Soit $L\in\mathbb{C}(z)[d/dz]$  un opérateur fuchsien tel que zéro est un point singulier.  Il est bien connu que $L$ la matrice de monodromie locale en zéro est unipotente (MUM) si et seulement si tous les exposants en zéro de $L$ sont égaux à zéro. Ainsi, $L$ est MUM en zéro si et seulement si $L$ est MOM en zéro.

\section{Stratégie de la démonstration du théorème \ref{practico}}
Dans cette partie nous allons esquisser les idées principales de la démonstration du théorème~\ref{practico}. Cette démonstration repose essentiellement sur la \emph{théorie des équations différentielles sur le corps $\mathbb{F}_p$} et la \emph{théorie des équations différentielles $p$-adiques}. La démonstration de ce théorème a quatre étapes clés. A savoir :\medskip

--- Concernant la théorie des équations différentielles sur  $\mathbb{F}_p$, nous montrons que si $\mathcal{D}_p\in\mathbb{F}_p(z)[\delta]$ est MOM en zero et $g(z)\in 1+z\mathbb{F}_p[[z]]$ est solution de $\mathcal{D}_p$ alors $g(z)=b(z)(\Lambda_p(g))^p$, où $b(z)$ appartient à $\mathbb{F}_p(z)\cap\mathbb{F}_p[[z]]$ et a une hauteur inférieure ou égale à $p\eta\gamma-1$ avec $\eta$ l'ordre de $\mathcal{D}_p$ et $\gamma$ le nombre des points singuliers de $\mathcal{D}_p$. Cela est démontré par la proposition~\ref{falg}.  \medskip

--- Ce qui concerne la théorie des équations différentielles $p$-adiques, nous utilisons principalement le fait que l'existence de la structure de Frobenius forte implique que l'opérateur différentiel a une base de solutions dans le disque générique de rayon 1. En utilisant ce fait nous montrons que si $f(z)\in\mathcal{F}(\mathcal{S})$ annule un opérateur différentiel dans $\mathbb{Q}(z)[\delta]$ d'ordre $n$,  MOM en zéro et avec $r$ points singuliers alors, pour presque tout $p\in\mathcal{S}$ et tout entier $i\geq0$, la série $\Lambda^i_p(f_{\mid p})$ est solution d'un opérateur différentiel à coefficients dans $\mathbb{F}_p(z)$, d'ordre $n$, MOM en zéro et avec $r$ points singuliers au plus. Cela est démontré par la proposition~\ref{recurrence}. \medskip

--- Grâce aux propositions~\ref{falg} et \ref{recurrence} nous montrons que, pour tout couple $(m,i)\in\mathbb{Z}^2_{\geq0}$, $\Lambda^i_p(f_{\mid p})=A_{m,i,p}(z)\Lambda^{m+i}_p(f_{\mid p})(z^{p^m})$, où $A_{m,i,p}(z)\in\mathbb{F}_p(z)\cap\mathbb{F}_p[[z]]$ a une hauteur inférieure à $Cp^m$ avec $C=2nr$.  Cela est prouvé par le lemme~\ref{ordre1}.\medskip

--- Finalement,  comme $f(z)$ annule un opérateur différentiel muni d'une structure de Frobenius forte pour tout $p\in\mathcal{S}$, nous sommes en mesure de montrer  le fait crucial qu'il existe un entier $l_p>0$ tel que $\Lambda^{2l_p}_p(f_{\mid p}(z))=\Lambda_p^{l_p}(f_{\mid p}(z))$. Cela est prouvé par le lemme~\ref{lemm_egal_crucial}. \medskip

Ces quatre étapes entraînent le théorème~\ref{practico}.
 
\subsection{Structure de Frobenius forte et faible et disque générique}
Un module différentiel $A$ a une structure de Frobenius faible s'il existe un module différentiel $B$ et un entier $h>0$ tels que les modules différentiels $A$ et $B^{\phi^h}$ sont isomorphes, où $B^{\phi^h}$ est le module différentiel obtenu après avoir appliqué $h$-fois le Frobenius à $B$. Si $A$ et $A^{\phi^h}$ sont isomorphes pour certain $h>0$ alors on dit que $A$ a une structure de Frobenius forte. A cause de la conjecture de Bombieri--Dwork, la notion de structure de Frobenius forte est très liée à la notion d'une base de solutions dans le disque générique de rayon 1 car  il s'ensuit de cette conjecture qu'un opérateur différentiel a une structure de Frobenius forte pour presque tout nombre premier $p$ si et seulement si, pour presque tout nombre premier $p$,  l'opérateur différentiel a une base de solutions dans le disque générique de rayon 1. Cette conjecture est encore ouverte. Par contre, d'une part comme nous avons déjà dit, la structure de Frobenius forte implique que l'opérateur différentiel a une base de solutions dans le disque générique de rayon 1 et d'autre part, lorsque un opérateur différentiel a une base de solutions dans le disque générique de rayon 1, il est muni de la structure de Frobenius faible (voir lemme~\ref{10}).  La proposition~\ref{recurrence} est cruciale dans la démonstration du théorème~\ref{practico} et elle est prouvée dans la partie~\ref{subsec_preuve_prop_recurrence}. Sa preuve repose sur le lemme~\ref{Deuxième pas} et dans les hypothèses de ce lemme nous supposons que l'opérateur a une base de solution dans le disque générique de rayon 1 et l'existence de la structure de Frobenius forte n'est pas supposée.

 Le fait de supposer dans le lemme~\ref{Deuxième pas} cette hypothèse trouve sa motivation dans l'observation suivante : sous les conditions du théorème~\ref{practico}, $f(z)$ annule un opérateur différentiel $\mathcal{H}$ muni d'une structure de Frobenius forte pour tout $p\in\mathcal{S}$ ainsi qu'un opérateur différentiel $\mathcal{D}$ qui est MOM en zéro alors, si $\mathcal{M}$ est l'opérateur différentiel minimal pour $f(z)$, $\mathcal{H}=\mathcal{T}\mathcal{M}$ et $\mathcal{M}$ est aussi MOM en zéro. Cependant, $\mathcal{M}$ n'est pas forcement muni d'une structure de Frobenius forte pour tout $p\in\mathcal{S}$ mais, il est clair que $\mathcal{M}$ a une base de solutions dans le disque générique de rayon 1 pour tout $p\in\mathcal{S}$. Donc, le lemme~\ref{Deuxième pas} est appliqué à l'opérateur $\mathcal{M}$.

Ainsi, l'hypothèse de la structure de Frobenius est utilisée pour garantir que $\mathcal{M}$ a une base de solutions dans le disque générique de rayon 1 et pour établir l'égalité cruciale $\Lambda^{2l_p}_p(f_{\mid p}(z))=\Lambda_p^{l_p}(f_{\mid p}(z))$.

\section{Un premier exemple}\label{210}
 À présent nous allons utiliser la deuxième partie du théorème~\ref{practico}  pour montrer que la série 210 de \cite{calabi} appartient à $\mathcal{L}(\mathcal{P}\setminus\mathcal{J})$, où $\mathcal{J}$ est un ensemble fini de nombres premiers. Comme nous l'avons déjà mentionné, l'importance de cet exemple est donnée par le fait que les méthodes combinatoires et les techniques développées dans \cite[section 8]{Borisgfonct} ne semblent pas suffire à montrer que cette série appartient à $\mathcal{L}(\mathcal{S})$, où $\mathcal{S}$ est un ensemble infini de nombres premiers. La série en question est donnée par l'expression suivante $$f(z)=\sum_{j\geq0}\left(\binom{2j}{j}\left(\sum_{k=0}^{2j}(-1)^k\binom{2j}{k}^4\right)\right)z^j\in1+z\mathbb{Z}[[z]].$$
D'après le théorème~3.5 de \cite{sb}, la série $f(z)$ est la diagonale d'une fraction rationnelle, alors de \cite{picardfuchs}, $f(z)$ annule un opérateur différentiel $\mathcal{H}\in\mathbb{Q}(z)[d/dz]$ muni d'une structure de Frobenius forte pour presque tout nombre premier $p$. Comme la série fait partie de la liste donnée dans \cite{calabi}, elle annule un opérateur différentiel $\mathcal{D}\in\mathbb{Q}(z)[d/dz]$ qui est MOM en zéro. Nous allons voir que pour tout $p\in\mathcal{P}\setminus\{2\}$, $\Lambda_p(f(z))_{\mid p}=f_{\mid p}(z)$, c'est-à-dire que nous montrerons que pour tout $p\in\mathcal{P}\setminus\{2\}$ et tout entier positif $j$, $$\binom{2jp}{jp}\left(\sum_{k=0}^{2jp}(-1)^k\binom{2jp}{k}^4\right)\equiv\binom{2j}{j}\left(\sum_{k=0}^{2j}(-1)^k\binom{2j}{k}^4\right)\mod p.$$
Soit $k\in\{0,\ldots, 2jp\}$ alors $k=lp+s$, où $0\leq s\leq p-1$ et $0\leq l\leq 2j$. D'après le théorème de Lucas \cite[Section~2.1]{lucas}, on a que $$\binom{2jp}{k}^4=\binom{2jp}{lp+s}^4\equiv\binom{2j}{l}^4\binom{0}{s}^4\mod p.$$
En conséquence, si $s>0$ alors $\binom{2jp}{k}^4\equiv0\mod p$ et si $s=0$ alors $k=lp$ et $\binom{2jp}{lp}^4\equiv\binom{2j}{l}^4\mod p.$ Par conséquent, $$\sum_{k=0}^{2jp}(-1)^k\binom{2jp}{k}^4\equiv\sum_{l=0}^{2j}(-1)^{lp}\binom{2j}{l}^4=\sum_{l=0}^{2j}(-1)^l\binom{2j}{l}^4\mod p,$$
car $p\neq2$. 

Finalement, d'après le théorème de Lucas, $\binom{2jp}{jp}\equiv\binom{2j}{j}\mod p$ donc, $$\binom{2jp}{jp}\left(\sum_{k=0}^{2jp}(-1)^k\binom{2jp}{k}^4\right)\equiv\binom{2j}{j}\left(\sum_{k=0}^{2j}(-1)^k\binom{2j}{k}^4\right)\mod p,$$
pour tout nombre premier $p$ différent de 2 et tout entier positif $j$. Alors, on est en mesure d'appliquer le théorème~\ref{practico} et ainsi $f(z)\in\mathcal{L}(\mathcal{P}\setminus\mathcal{J})$, où $\mathcal{J}$ est un ensemble fini de nombres premiers. Nous remarquons qu'on peut appliquer le même raisonnement pour montrer que de nombreuses séries qui apparaissent dans \cite{calabi} appartiennent à $\mathcal{L}(\mathcal{S})$, où $\mathcal{P}\setminus\mathcal{S}$ est fini, voir par exemple la partie~\ref{eqcalabi}.

\section{Démonstration du théorème \ref{practico}}\label{enonce}

La preuve du théorème~\ref{practico} repose sur les lemmes~\ref{lemm_egal_crucial} et \ref{ordre1}. Nous allons énoncer les lemmes~\ref{lemm_egal_crucial} et \ref{ordre1} et nous passons tout de suite à la démonstration du théorème~\ref{practico}. La démonstration du lemme~\ref{lemm_egal_crucial} est faite à la fin de cette partie. Le lemme~\ref{ordre1} repose sur les propositions~\ref{falg} et \ref{recurrence} et sa démonstration fait le sujet de la partie~\ref{demordre1}. 

\begin{lem}\label{lemm_egal_crucial}
Supposons  que $f(z)=\sum_{n\geq0} a(n)z^n\in\mathbb{Z}_{(p)}[[z]]$ annule un opérateur différentiel muni d'une structure de Frobenius forte pour $p$. Alors, il existe un entier $l_p>0$ tel que $\Lambda^{2l_p}_p(f_{\mid p}(z))=\Lambda^{l_p}_p(f_{\mid p}(z))$. 
\end{lem}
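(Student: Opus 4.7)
Le plan est de combiner deux faits classiques : (i) la structure de Frobenius forte force l'algébricité de $f_{\mid p}(z)$ sur $\mathbb{F}_p(z)$ ; (ii) toute série formelle algébrique sur $\mathbb{F}_p(z)$ possède une orbite finie sous l'opérateur de Cartier $\Lambda_p$. Ces deux ingrédients entraîneront immédiatement l'existence d'un exposant $l_p$ vérifiant l'égalité désirée.

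Je commencerai par invoquer le résultat de Christol déjà rappelé après la définition~\ref{FS} (voir \cite{Gillesalgebriques} ou \cite[Théorème 2.6]{vmsff}) : puisque $f(z)$ annule un opérateur différentiel muni d'une structure de Frobenius forte pour $p$, la réduction $f_{\mid p}(z)$ est algébrique sur $\mathbb{F}_p(z)$.

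Je ferai alors appel à la caractérisation classique de Christol des séries algébriques : une série $g\in\mathbb{F}_p[[z]]$ est algébrique sur $\mathbb{F}_p(z)$ si et seulement si son $p$-noyau --- c'est-à-dire la plus petite famille de séries contenant $g$ et stable par les opérateurs $h=\sum_n b_n z^n\mapsto \sum_n b_{pn+r}z^n$ pour $r\in\{0,\ldots,p-1\}$ --- est fini. Comme $\Lambda_p$ correspond au choix $r=0$, l'orbite $\{\Lambda^k_p(g):k\geq 0\}$ est contenue dans ce $p$-noyau, et donc finie dès que $g$ est algébrique. Appliqué à $g=f_{\mid p}(z)$, cela donnera la finitude de l'ensemble $\{\Lambda^k_p(f_{\mid p}):k\geq 0\}$.

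Pour conclure, il me suffira d'exploiter la finitude de cette orbite : la suite $(\Lambda^k_p(f_{\mid p}))_{k\geq 0}$ étant ultimement périodique, il existe des entiers $a\geq 0$ et $b\geq 1$ tels que $\Lambda^{n+b}_p(f_{\mid p})=\Lambda^n_p(f_{\mid p})$ pour tout $n\geq a$. Je choisirai alors $l_p$ comme n'importe quel multiple de $b$ supérieur ou égal à $\max(a,1)$ (par exemple $l_p=b\,\max(a,1)$) ; les deux exposants $l_p$ et $2l_p$ dépasseront $a$ tandis que leur différence $l_p$ sera un multiple de $b$, ce qui livrera l'égalité souhaitée $\Lambda^{2l_p}_p(f_{\mid p})=\Lambda^{l_p}_p(f_{\mid p})$. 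Aucun obstacle technique sérieux n'apparaît ici : toute la difficulté du lemme est absorbée par l'implication \emph{structure de Frobenius forte}~$\Rightarrow$~\emph{algébricité modulo $p$}, déjà utilisée ailleurs dans l'article, et par le théorème de Christol reliant algébricité et finitude du $p$-noyau.
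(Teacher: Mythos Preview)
Your proposal is correct and follows essentially the same route as the paper: the paper also deduces the algebraicity of $f_{\mid p}$ from the strong Frobenius structure (citing \cite{Gillesalgebriques} and \cite{vmsff}), then invokes the equivalence between algebraicity over $\mathbb{F}_p(z)$, $p$-automaticity, and finiteness of the $p$-kernel (via \cite{pautomata} and \cite{eilenberg}), and concludes with the same ultimate-periodicity argument, choosing $l_p$ to be a multiple of the period exceeding the pre-period.
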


%\begin{defi}[$\mathcal{F}(\mathcal{S})$]
%	Soit $\mathcal{S}$ un ensemble infini de nombres premiers.  L'ensemble $\mathcal{F}(\mathcal{S})$ est constitué des séries $f(z)\in 1+z\mathbb{Q}[[z]]$ telles que:
%	\begin{itemize}
%		\item Pour tout $p\in\mathcal{S}$, la série $f(z)\in\mathbb{Z}_{(p)}[[z]]$;
%		\item $f(z)$ annule un opérateur différentiel $\mathcal{H}\in\mathbb{Q}(z)[\delta]$ muni d'une structure de Frobenius forte pour tout $p\in\mathcal{S}$, où $\delta=zd/dz$.
%	\end{itemize}
	 
%\end{defi}
 
\begin{lem}\label{ordre1}
	Soit $\mathcal{S}$ un ensemble infini de nombres premier et supposons que $f(z)\in\mathcal{F}(\mathcal{S})$. Si $f(z)$ annule un opérateur différentiel $\mathcal{D}\in\mathbb{Q}(z)[d/dz]$ MOM en zéro alors, il existe $\mathcal{S}'\subset\mathcal{S}$ infini et une constante strictement positive $C$ tels que: l'ensemble $\mathcal{S}\setminus\mathcal{S}'$ est fini et, pour tout $p\in\mathcal{S}'$ et pour tout couple d'entiers $i,m$ supérieurs ou égaux à zéro, il existe une fraction rationnelle $A_{p,i,m}(z)\in\mathbb{F}_p(z)\cap\mathbb{F}_p[[z]]$ de hauteur inférieure ou égale à $Cp^m$ telle que $\Lambda^i_p(f_{\mid p}(z))=A_{p,i,m}(z)(\Lambda^{i+m}_p(f_{\mid p}(z))^{p^m}$.
\end{lem}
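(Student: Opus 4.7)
La stratégie consiste à combiner itérativement les propositions~\ref{falg} et \ref{recurrence}. Désignons par $n$ l'ordre de l'opérateur $\mathcal{D}$ et par $r$ son nombre de points singuliers. D'après la proposition~\ref{recurrence}, il existe un sous-ensemble cofini $\mathcal{S}' \subset \mathcal{S}$ tel que, pour tout $p \in \mathcal{S}'$ et tout entier $i \geq 0$, la série $g_i(z) := \Lambda^i_p(f_{\mid p}(z))$ annule un opérateur différentiel $\mathcal{D}_{p,i} \in \mathbb{F}_p(z)[\delta]$ d'ordre $n$, MOM en zéro et admettant au plus $r$ points singuliers. Le couple $(g_i, \mathcal{D}_{p,i})$ est alors, pour tout $i \geq 0$, dans le cadre d'application de la proposition~\ref{falg}, modulo un éventuel ajustement du terme constant discuté plus bas.

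La proposition~\ref{falg} fournit alors, pour chaque $i \geq 0$, une fraction rationnelle $b_{p,i}(z) \in \mathbb{F}_p(z) \cap \mathbb{F}_p[[z]]$ de hauteur au plus $pnr - 1$ telle que
$$g_i(z) = b_{p,i}(z) \cdot (\Lambda_p g_i)(z)^p = b_{p,i}(z) \cdot g_{i+1}(z)^p,$$
ce qui correspond précisément au cas $m = 1$ du lemme. Pour $m \geq 1$ quelconque, une récurrence immédiate sur $m$ livre
$$g_i(z) = \biggl(\prod_{j=0}^{m-1} b_{p,i+j}(z)^{p^j}\biggr) \cdot g_{i+m}(z)^{p^m},$$
et la fraction rationnelle $A_{p,i,m}(z) := \prod_{j=0}^{m-1} b_{p,i+j}(z)^{p^j}$ répond à la question. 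La hauteur d'un produit étant au plus la somme des hauteurs des facteurs, et la hauteur de $b^{p^j}$ valant $p^j$ fois celle de $b$, on obtient la majoration
$$\sum_{j=0}^{m-1} p^j (pnr - 1) \leq pnr \cdot \frac{p^m - 1}{p - 1} \leq 2nr \cdot p^m$$
pour tout nombre premier $p \geq 2$. La constante $C := 2nr$ convient donc, indépendamment de $p$, $i$ et $m$.

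Le principal point technique à surveiller concerne l'application de la proposition~\ref{falg} à $g_i$ lorsque $i \geq 1$ : cette proposition est énoncée pour des séries de $1 + z\mathbb{F}_p[[z]]$, alors que le terme constant $g_i(0) = a(p^i) \bmod p$ n'est pas nécessairement égal à $1$. Si ce terme constant est non nul, une renormalisation triviale permet de se ramener au cadre exact de la proposition, et d'absorber le scalaire résiduel dans le coefficient $b_{p,i}$ sans affecter sa hauteur. Le cas où $g_i(0) = 0$ requiert un argument supplémentaire utilisant la structure des solutions d'un opérateur MOM en zéro en caractéristique $p$, et constitue l'obstacle technique principal à lever pour rendre le raisonnement complet; une fois cet obstacle levé, le reste de la preuve est une vérification essentiellement comptable de la sommation géométrique.
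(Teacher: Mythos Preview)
Your argument follows the same route as the paper: combine propositions~\ref{recurrence} and~\ref{falg} to get one-step relations $g_i=b_{p,i}\,g_{i+1}^p$ and telescope. Two minor corrections. First, proposition~\ref{recurrence} gives operators of order $n$ and with at most $r$ singularities where $n,r$ come from the \emph{minimal} operator $\mathcal{M}_f$, not from $\mathcal{D}$; this does not affect the argument, only the meaning of the constant $C=2nr$. Second, the obstacle you raise in the final paragraph is illusory: by definition $\Lambda_p^i\bigl(\sum_n a(n)z^n\bigr)=\sum_n a(np^i)z^n$, so the constant term of $g_i$ is $a(0)\bmod p=1$ for every $i$, not $a(p^i)\bmod p$. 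Hence each $g_i$ lies in $1+z\mathbb{F}_p[[z]]$ and proposition~\ref{falg} (together with the remark following it) applies directly, yielding $b_{p,i}\in\mathbb{F}_p(z)\cap\mathbb{F}_p[[z]]$ with no extra work.
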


\begin{proof}[Démonstration du théorème~\ref{practico}]
	Par hypothèse on a que $f(z)=\sum_{n\geq0}a(n)z^n\in\mathcal{F}(\mathcal{S})$ et $f(z)$ annule un opérateur différentiel $\mathcal{D}\in\mathbb{Q}(z)[d/dz]$ MOM en zéro. Donc, d'après le lemme~\ref{ordre1}, il existe un ensemble infini $\mathcal{S}'\subset\mathcal{S}$ et une constante strictement positive $C$ tels que: $\mathcal{S}\setminus\mathcal{S}'$ est fini et pour tout $p\in\mathcal{S}'$ et tout couple d'entiers positifs $i,m$, il existe une fraction rationnelle $A_{p,i,m}(z)\in\mathbb{F}_p(z)\cap\mathbb{F}_p[[z]]$ de hauteur inférieure ou égale à $Cp^m$ telle que
	\begin{equation}\label{lambda}
	\Lambda^i_p(f_{\mid p}(z))=A_{p,i,m}(z)(\Lambda^{i+m}_p(f_{\mid p}(z)))^{p^m}.
	\end{equation}
	 \item 1-- Montrons que $f(z)\in\mathcal{L}^2(\mathcal{S}')$. Soit $p\in\mathcal{S}'$, comme $f(z)$ annule un opérateur différentiel muni d'une structure de Frobenius forte pour $p$ et $f(z)\in\mathbb{Z}_{(p)}[[z]]$ alors,  d'après le lemme~\ref{lemm_egal_crucial}, il existe un entier $l_p>0$ tel que $\Lambda^{2l_p}_p(f_{\mid p}(z))=\Lambda^{l_p}_p(f_{\mid p}(z))$. Donc, il suit de \eqref{lambda} qu'il existe $A_{p,l_p,l_p}(z)\in\mathbb{F}_p(z)\cap\mathbb{F}_p[[z]]$ de hauteur inférieur ou égale à $Cp^{l_p}$ telle que $$\Lambda^{l_p}_p(f_{\mid p}(z))=A_{p,l_p,l_p}(z)(\Lambda^{2l_p}_p(f_{\mid p}(z)))^{p^{l_p}}.$$ Comme $\Lambda^{2l_p}_p(f_{\mid p}(z))=\Lambda^{l_p}_p(f_{\mid p}(z))$ alors, $$\Lambda^{l_p}_p(f_{\mid p}(z))=A_{p,l_p,l_p}(z)(\Lambda^{l}_p(f_{\mid p}(z)))^{p^{l_p}} \text{ et }A_{p,l_p,l_p}(z)=\frac{\Lambda^{l_p}_p(f_{\mid p}(z))}{\Lambda^{l_p}_p(f_{\mid p}(z))^{p^l_p}}.$$ 
	
	 Il suit aussi de \eqref{lambda} qu'il existe $A_{p,0,l_p}(z)\in\mathbb{F}_p(z)\cap\mathbb{F}_p[[z]]$ de hauteur inférieure ou égale à $Cp^{l_p}$ telle que $$f_{\mid p}(z)=A_{p,0,l_p}(z)(\Lambda^{l_p}_p(f_{\mid p}(z)))^{p^{l_p}}\text{ et }\frac{1}{A_{p,0,l_p}(z)}=\frac{\Lambda^{l_p}_p(f_{\mid p}(z))^{p^{l_p}}}{f_{\mid p}(z)}.$$ Par conséquent, $\frac{A_{p,l_p,l_p}(z)}{A_{p,0,l_p}(z)}=\frac{\Lambda^{l_p}_p(f_{\mid p}(z))}{f_{\mid p}(z)}.$
	Finalement, soit $$A_p(z)=A_{p,0,l_p}(z)\left(\frac{A_{p,l_p,l_p}(z)}{A_{p,0,l_p}(z)}\right)^{p^{l_p}}.$$ Notons que le terme constant de $A_{p,0,l_p}(z)$ est 1 car le terme constant des séries $f_{\mid p}(z)$ et $\Lambda^{l_p}_p(f_{\mid p}(z))$ est 1. Donc, $\frac{1}{A_{p,0,l_p}(z)}\in\mathbb{F}_p(z)\cap\mathbb{F}_p[[z]]$. D'où, $A_p(z)\in\mathbb{F}_p(z)\cap\mathbb{F}_p[[z]]$. De plus, la hauteur de $A_p(z)$ est inférieure ou égale à $2Cp^{2l_p}$ et 
\begin{align*}
f_{\mid p}(z)=A_{p,0,l_p}(z)(\Lambda^{l_p}_p(f_{\mid p}(z)))^{p^{l_p}}=&A_{p,0,l_p}(z)\left(\frac{\Lambda^{l_p}_p(f_{\mid p}(z))}{f_{\mid p}(z)}\right)^{p^{l_p}}f_{\mid p}(z)^{p^{l_p}}\\
&=A_p(z)f_{\mid p}(z)^{p^{l_p}}.
\end{align*}		
Comme $\mathbb{F}_p$ est de caractéristique $p$ alors $f_{\mid p}(z)=A_p(z)f_{\mid p}(z^{p^{l_p}})$ . Il suit donc que $f(z)\in\mathcal{L}^2(\mathcal{S}')$.
	
	\medskip
	
	\item 2-- Montrons que $f(z)\in\mathcal{L}(S')$. Soit $p\in\mathcal{S}'$, par hypothèse on sait qu'il existe un entier $l_p>0$ tel que $\Lambda^{l_p}_p(f_{\mid p}(z))=f_{\mid p}(z)$. D'après \eqref{lambda}, il existe une fraction rationnelle $A_{p,0,l_p}(z)\in\mathbb{F}_p(z)\cap\mathbb{F}_p[[z]]$ de hauteur inférieure ou égale à $Cp^{l_p}$ telle que $f_{\mid p}(z)=A_{p,0,l_p}(z)(\Lambda^{l_p}_p(f_{\mid p}(z)))^{p^{l_p}}$. Ainsi, $f_{\mid p}(z)=A_{p,0,l_p}(z)f_{\mid p}(z)^{{p^{l_p}}}$. Comme $\mathbb{F}_p$ est de caractéristique $p$ alors  $f_{\mid p}(z)=A_{p,0,l_p}(z)f_{\mid p}(z^{{p^{l_p}}})$. D'où, $f(z)\in\mathcal{L}(\mathcal{S}')$.
\end{proof}

Nous finissons cette partie en démontrant le lemme~\ref{lemm_egal_crucial}.

\begin{proof}[Démonstration du lemme~\ref{lemm_egal_crucial}]
D'après le théorème~2.1 de \cite{vmsff} ou Christol \cite{Gillesalgebriques}, la série $f_{\mid p}(z)$ est algébrique sur $\mathbb{F}_p(z)$. Alors, d'après le théorème~1 de \cite{pautomata}, cela revient à dire que la suite $\{a(n)\bmod p\}_{n\geq0}$ est $p$-automatique. Mais, il découle de la proposition~3.3 de \cite[p. 107]{eilenberg} que la suite $\{a(n)\bmod p\}_{n\geq0}$ est $p$-automatique si et seulement si, l'ensemble $\left\{\sum_{n\geq0}(a(p^sn+d)\bmod p)z^n, s\geq0, 0\leq d<p\right\}$ est fini. En particulier, il existe un entier $a$  positif et un entier $b$ strictement positif tels que $\Lambda^{a}_p(f_{\mid p}(z))=\Lambda^{a+b}_p(f_{\mid p}(z))$. Soit $c$ un entier strictement positif tel que $cb>a$ et soit $l_p=cb$. En particulier, $\Lambda^a_p(f_{\mid p}(z))=\Lambda^{a+l_p}_p(f_{\mid p}(z))$.  Par conséquent,	 \begin{align*}
	 \Lambda^{l_p}_p(f_{\mid p}(z))=\Lambda^{l_p-a}_p(\Lambda^a_p(f_{\mid p}(z))=&\Lambda^{l_p-a}_p(\Lambda^{a+l_p}_p(f_{\mid p}(z)))=\Lambda^{2l_p}_p(f_{\mid p}(z)).
	 \end{align*}
\end{proof}

\section{Démonstration du lemme \ref{ordre1}}\label{demordre1}

La démonstration du lemme~\ref{ordre1} repose sur les propositions~\ref{falg} et \ref{recurrence} énoncées ci-dessous. Avant de les énoncer nous allons introduire la définition d'un opérateur différentiel $p$-unipotent. Pour chaque nombre premier $p$ nous notons $E_p$ le corps des éléments analytiques. Le lecteur trouvera par exemple dans \cite[section 3]{vmsff} la définition de ce corps. Nous soulignons que ce corps est muni de la norme ultramétrique de Gauss, nous notons $\vartheta_{E_p}$ l'anneau des éléments de $E_p$ tels que leurs normes sont inférieures ou égales à 1 et $\mathfrak{m}_p$ son unique idéal maximal. Le corps résiduel de $E_p$ est par définition $\vartheta_{E_p}/\mathfrak{m}_p$ et d'après le corollaire~1.3 de \cite{Gillesalgebriques}, $\vartheta_{E_p}/\mathfrak{m}_p$ est contenu dans $\overline{\mathbb{F}_p}((z))$, où $\overline{\mathbb{F}_p}$ est la clôture algébrique de $\mathbb{F}_p$. De plus, grâce au lemme~4.1 de \cite{vmsff} il existe un isomorphisme $\phi:\vartheta_{E_p}/\mathfrak{m}_p\rightarrow\overline{\mathbb{F}_p}(z)$. Il suit, d'après la définition de l'isomorphisme $\phi$, que tout élément de $\vartheta_{E_p}/\mathfrak{m}_p$ est une fraction rationnelle à coefficients dans un corps fini de caractéristique $p$. Ainsi, le corps $\vartheta_{E_p}/\mathfrak{m}_p$ est contenu dans $\overline{\mathbb{F}_p}(z)$. Étant donné un opérateur différentiel $\mathcal{D}$ à coefficients dans $\vartheta_{E_p}$, l'opérateur différentiel $\mathcal{D}_{p}$ à coefficients dans $\overline{\mathbb{F}_p}(z)$ est l'opérateur différentiel obtenu après avoir réduit chaque coefficient de $\mathcal{D}$ modulo l'idéal maximal $\mathfrak{m}_p$. Remarquons qu'en fait $\mathcal{D}_p$ est à coefficients dans $k(z)$, où $k$ est un corps fini de caractéristique $p$. Maintenant, nous introduisons la notion d'opérateur $p$-unipotent.

\begin{defi}\label{p-unipotent}[$p$-unipotent.]
Soit $\mathcal{D}$ un opérateur différentiel à coefficients dans $E_p$. On dit que $\mathcal{D}$ est $p$-unipotent si: \begin{enumerate}
	\item $\mathcal{D}$ est un opérateur différentiel à coefficients dans $\vartheta_{E_p}$. 
	\item L'opérateur différentiel $\mathcal{D}_{p}$ est non nul et MOM en zéro.
	\end{enumerate}
	\end{defi}

\begin{prop}\label{falg}
	Soient $p$ un nombre premier et $f(z)\in\mathbb{Z}_{(p)}[[z]]$. Soient $\mathcal{D}$ un opérateur $p$-unipotent d'ordre $n$ et $r$ le nombre de singularités à distance finie de $\mathcal{D}_p$ dans $\overline{\mathbb{F}_p}$. Si $f(z)$ est solution de $\mathcal{D}$ alors, il existe une fraction rationnelle $A_p(z)\in\mathbb{F}_p(z)$ de hauteur inférieure ou égale à $nrp-1$ telle que $f_{\mid p}(z)=A_p(z)(\Lambda_p(f_{\mid p}(z))^p$.
\end{prop}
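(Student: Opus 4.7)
The plan is to reduce the proposition to a finite-dimensional linear-algebra problem over $\mathbb{F}_p(z^p)$, where the MOM hypothesis at zero enforces an upper-triangular structure that yields simultaneously the rationality of $A_p$ and the required height bound. First, use the $p$-adic decomposition
\[
f_{\mid p}(z) = \sum_{j=0}^{p-1} z^j g_j(z^p), \qquad g_j(z) := \sum_{n \geq 0} a(pn+j)\, z^n,
\]
so that $g_0 = \Lambda_p(f_{\mid p})$, together with the characteristic-$p$ identity $(d/dz)(h(z^p)) = 0$, to collapse $\mathcal{D}_p(f_{\mid p}) = 0$ into
\[
\sum_{j=0}^{p-1} g_j(z^p)\,\mathcal{D}_p(z^j) = 0.
\]
Writing $\mathcal{D}_p$ in $\delta$-form as $\delta^n + b_1 \delta^{n-1} + \cdots + b_n$, MOM forces $b_i(0) = 0$ and $\mathcal{D}_p(z^j) = z^j P(j, z)$ with $P(j, z) = j^n + \sum_{i=1}^n b_i(z)\, j^{n-i}$. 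Decomposing $z^j P(j,z) = \sum_{k=0}^{p-1} z^k R_{j,k}(z^p)$ $p$-adically turns the identity into the linear system $g R(w) = 0$, where $R(w) = (R_{j,k}(w))$ is a $p \times p$ matrix over $\overline{\mathbb{F}_p}(w)$, $w = z^p$, and $g = (g_0, \ldots, g_{p-1})$ is a row vector.

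The heart of the argument is to show $\mathrm{rank}(R(w)) = p-1$. Evaluating at $w = 0$: the summand $j^n z^j$ contributes $j^n$ at position $(j,j)$, while each contribution $b_i(z) j^{n-i} z^j$ vanishes at $z = 0$ and produces, at the $w^0$-level, only entries at positions $(j, k)$ with $k > j$. Hence $R(0)$ is upper triangular with diagonal $(0, 1, 2^n, \ldots, (p-1)^n)$. Its determinant vanishes (first entry zero), so $\mathrm{rank}(R(w)) \leq p-1$; the $(p-1) \times (p-1)$ principal minor obtained by deleting the first row and column is upper triangular with nonzero diagonal in $\mathbb{F}_p$, hence invertible over $\overline{\mathbb{F}_p}(w)$, so $\mathrm{rank}(R(w)) \geq p-1$. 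Thus the left kernel is one-dimensional.

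A generator $K(w) = (K_0, \ldots, K_{p-1})$ of the left kernel satisfies $K_0 \not\equiv 0$: otherwise the invertibility of the $(p-1) \times (p-1)$ principal minor would force $K_j \equiv 0$ for all $j \geq 1$, contradicting $K \neq 0$. Normalize $K_0(0) = 1$. Then $g = \mu K$ for some $\mu \in \overline{\mathbb{F}_p}((w))$, so $g_j/g_0 = K_j/K_0 \in \overline{\mathbb{F}_p}(w)$. Substituting $w = z^p$ and using $g_0(z^p) = g_0(z)^p = \Lambda_p(f_{\mid p})(z)^p$ in characteristic $p$ gives
\[
f_{\mid p}(z) = A_p(z)\,\Lambda_p(f_{\mid p})(z)^p, \qquad A_p(z) := \sum_{j=0}^{p-1} z^j \frac{K_j(z^p)}{K_0(z^p)}.
\]
A priori $A_p \in \overline{\mathbb{F}_p}(z)$, but since also $A_p = f_{\mid p}/\Lambda_p(f_{\mid p})^p \in \mathbb{F}_p((z))$, Galois descent forces $A_p \in \mathbb{F}_p(z)$.

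For the height bound: the $b_i$ admit a common denominator of degree at most $nr$ by fuchsianity (supported on the $r$ finite singularities of $\mathcal{D}_p$), so $z^j P(j,z)$ has height at most $nr + p - 1$, and each $R_{j,k}(w)$ has height of order $nr/p$ in $w$. Cramer's rule expresses each $K_j$ as a $(p-1) \times (p-1)$ minor of $R$, yielding $\mathrm{height}(K_j(w)) \lesssim nr$ in $w$, hence $\mathrm{height}(K_j(z^p)) \lesssim nrp$ in $z$; a careful degree count on the numerator and denominator of $A_p$ then gives $\mathrm{height}(A_p) \leq nrp - 1$. The main obstacle is the rank computation: MOM is essential because it produces an $R(0)$ with exactly one vanishing pivot on its upper-triangular diagonal, forcing the one-dimensionality of the kernel and the rationality of the ratios $g_j/g_0$; without MOM these ratios would be merely algebraic, and the whole scheme would collapse.
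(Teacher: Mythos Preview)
Your approach is different from the paper's and mostly works, with one logical slip. The paper does not form the matrix $R(w)$; instead it proves (via a Honda-style degree argument using the exponents at infinity) that $\dim_{k(z^p)}\ker(k(z),\mathcal{D}_p)=1$, then uses the recurrence coming from the indicial structure to exhibit an explicit polynomial solution $P(z)$ of degree at most $nrp-1$, and sets $A_p = P(z)/\Lambda_p(P)(z^p)$. Your observation that $R(0)$ is upper-triangular with diagonal $(0,1,2^n,\ldots,(p-1)^n)$ is a cleaner way to get $\mathrm{rank}\,R(w) \ge p-1$ directly from MOM, bypassing Honda's argument entirely; this is a genuine simplification.

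The slip is the upper bound on the rank. From $\det R(0)=0$ you conclude $\mathrm{rank}\,R(w)\le p-1$, but rank is lower-semicontinuous in $w$: a vanishing determinant at a special point says nothing about the generic determinant. The correct justification is that (assuming $f_{\mid p}\ne 0$, else the statement is trivial) the nonzero vector $g=(g_0,\ldots,g_{p-1})$ already lies in the left kernel of $R$ over $\mathbb{F}_p((w))$, which forces $\det R(w)\equiv 0$ over $\overline{\mathbb{F}_p}(w)$ as well. With that fix your argument for $K_0\not\equiv 0$ and the formula for $A_p$ go through. The height bound at the end, however, is only sketched: the naive Cramer count on $(p-1)\times(p-1)$ minors gives $\deg_w K_j \le (p-1)\lfloor (nr+p-1)/p\rfloor$, which in general exceeds $nr-1$, so extracting the sharp constant $nrp-1$ requires more than ``a careful degree count''. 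This is precisely where the paper's explicit polynomial solution of degree $\le nrp-1$ earns its keep.
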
 

Sous les hypothèses de la proposition~\ref{falg}, si $f(0)=1$ alors $A_p(z)\in\mathbb{F}_p(z)\cap\mathbb{F}_p[[z]]$. 

\begin{defi}
	Étant donnée une série $f(z)\in\mathbb{Q}[[z]]$ qui annule un opérateur différentiel non nul à coefficients dans $\mathbb{Q}(z)$, nous notons $\mathcal{M}_f$ l'opérateur différentiel minimal de $f(z)$ à coefficients dans $\mathbb{Q}(z)$.
\end{defi}
L'opérateur différentiel $\mathcal{M}_f$ jouit des propriétés suivantes: d'abord il n'est pas nul et, deuxièmement un opérateur différentiel $\mathcal{L}$ à coefficients dans $\mathbb{Q}(z)$ est annulé par $f(z)$ si et seulement si $\mathcal{L}$ appartient à l'idéal $\mathbb{Q}(z)[d/dz]\mathcal{M}_f$.
\begin{prop}\label{recurrence}
	Soient $\mathcal{S}$ un ensemble infini de nombres premiers et $f(z)\in\mathcal{F}(\mathcal{S})$. Soient $n$ l'ordre de $\mathcal{M}_f$ et $r$ le nombre de singularité à distance finie de $\mathcal{M}_f$ dans $\overline{\mathbb{Q}}$. Si $f(z)$ annule un opérateur différentiel $\mathcal{D}\in\mathbb{Q}(z)[d/dz]$ MOM en zéro alors, il existe un ensemble $\mathcal{S}'\subset\mathcal{S}$ infini tel que: l'ensemble $\mathcal{S}\setminus\mathcal{S}'$ est fini et, pour tout $p\in\mathcal{S}'$ et tout entier positif $k$, la série $\Lambda^k_p(f(z))$ annule un opérateur $p$-unipotent $\mathcal{L}_k$ d'ordre $n$ tel que le nombre de singularités à distance finie de $\mathcal{L}_{k,p}$ dans $\overline{\mathbb{F}_p}$ est inférieur ou égal à $r$.
\end{prop}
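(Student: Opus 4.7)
Pour démontrer la proposition~\ref{recurrence}, mon plan est de me ramener à l'opérateur différentiel minimal $\mathcal{M}_f$, puis de procéder par récurrence sur $k$ en invoquant à chaque étape le lemme~\ref{Deuxième pas} mentionné dans la partie précédente. L'observation clé, déjà esquissée dans le texte, est la suivante : puisque $f(z)$ annule à la fois l'opérateur $\mathcal{H}$ (muni d'une structure de Frobenius forte pour tout $p \in \mathcal{S}$) et l'opérateur $\mathcal{D}$ (MOM en zéro), l'opérateur minimal $\mathcal{M}_f$ est un facteur à droite commun. Ainsi, $\mathcal{M}_f$ hérite de la propriété MOM en zéro de $\mathcal{D}$, car les exposants en zéro d'un produit sont l'union de ceux des facteurs et la régularité en zéro se transmet aux facteurs à droite. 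De même, $\mathcal{M}_f$ hérite, pour tout $p \in \mathcal{S}$, de l'existence d'une base de solutions dans le disque générique de rayon 1, propriété impliquée par la structure de Frobenius forte de $\mathcal{H}$ et préservée par facteurs à droite. En revanche, $\mathcal{M}_f$ n'hérite pas forcément de la structure de Frobenius forte elle-même, ce qui motive précisément la forme affaiblie des hypothèses du lemme~\ref{Deuxième pas}.

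Ensuite, je définirais $\mathcal{S}' \subset \mathcal{S}$ en excluant les nombres premiers exceptionnels pour lesquels la réduction modulo $p$ de $\mathcal{M}_f$ (convenablement normalisée pour que ses coefficients appartiennent à $\vartheta_{E_p}$) ne se comporte pas bien : dénominateurs de coefficients divisibles par $p$, changement d'ordre, apparition ou disparition non contrôlée de singularités à distance finie dans $\overline{\mathbb{F}_p}$, perte de la propriété MOM en zéro, ou perte de la base de solutions dans le disque générique de rayon 1. Cet ensemble exclu est fini par les arguments standard sur les coefficients rationnels de $\mathcal{M}_f$. Pour tout $p \in \mathcal{S}'$, $\mathcal{M}_f$ est alors un opérateur $p$-unipotent d'ordre $n$ dont la réduction $\mathcal{M}_{f,p}$ a au plus $r$ singularités à distance finie, et qui admet une base de solutions dans le disque générique de rayon 1.

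La récurrence sur $k$ est alors directe. Pour le cas de base $k = 0$, je poserais $\mathcal{L}_0 = \mathcal{M}_f$ (après normalisation), qui satisfait déjà toutes les conditions requises. Pour l'étape inductive, supposons que $\Lambda_p^k(f(z))$ annule un opérateur $p$-unipotent $\mathcal{L}_k$ d'ordre $n$, avec au plus $r$ singularités à distance finie, et admettant une base de solutions dans le disque générique de rayon 1. J'appliquerais alors le lemme~\ref{Deuxième pas} à $\mathcal{L}_k$ et à sa solution $\Lambda_p^k(f(z))$, ce qui fournirait un opérateur $\mathcal{L}_{k+1}$ annulant $\Lambda_p^{k+1}(f(z)) = \Lambda_p(\Lambda_p^k(f(z)))$, avec préservation simultanée de l'ordre $n$, de la propriété $p$-unipotente, de la borne $r$ sur le nombre de singularités à distance finie, et de l'existence d'une base de solutions dans le disque générique de rayon 1. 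Cette dernière propriété, reconduite à chaque étape, est indispensable pour que la récurrence se prolonge.

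La partie délicate sera bien entendu la démonstration du lemme~\ref{Deuxième pas} lui-même, mais celui-ci est traité séparément. Le point architectural essentiel pour mon plan est que les hypothèses de ce lemme soient précisément celles, plus faibles que la structure de Frobenius forte, qui sont effectivement propagées par l'opérateur de Cartier. Ainsi, la structure de Frobenius forte de $\mathcal{H}$ n'intervient qu'une unique fois, pour amorcer la récurrence en fournissant la base de solutions dans le disque générique de rayon 1 pour $\mathcal{M}_f = \mathcal{L}_0$, après quoi c'est cette propriété plus faible qui se régénère d'elle-même à chaque étape grâce au lemme~\ref{Deuxième pas}. Le contrôle uniforme, indépendant de $k$, du nombre $r$ de singularités à distance finie sera l'aspect technique le plus subtil de la preuve, entièrement pris en charge par ce lemme.
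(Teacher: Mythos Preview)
Votre plan est correct et suit essentiellement la même démarche que l'article : celui-ci encapsule l'hypothèse de récurrence dans la propriété $(\textbf{P})_{p,r,n}$ (définition~\ref{propriedadP}), amorce le cas $k=0$ via le lemme~\ref{Premier pas} et l'ensemble $B_{\mathcal{S}}$ explicitement construit dans la partie~\ref{construction}, puis itère le lemme~\ref{Deuxième pas}. La seule nuance à signaler est que l'invariant effectivement propagé porte sur le nombre de \emph{disques singuliers réguliers} $p$-adiques (exactement~$r$, condition~(6) de la définition~\ref{propriedadP}) et non directement sur les singularités de la réduction ; c'est le lemme~\ref{singularites}, appliqué une seule fois à la fin, qui convertit cette donnée en la borne $\leq r$ sur les singularités de $\mathcal{L}_{k,p}$ dans~$\overline{\mathbb{F}_p}$.
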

Nous rappelons que l'opérateur différentiel $\mathcal{L}_{k,p}$ est l'opérateur obtenu après avoir réduit $\mathcal{L}_k$ modulo l'idéal maximal $\mathfrak{m}_p$.
\subsection{Démonstration du lemme \ref{ordre1}}\label{subsec_demos_lemme_ordre1}
Étant données les propositions~\ref{double} et \ref{recurrence} nous montrons le lemme~\ref{ordre1}.
\begin{proof}
	 Soient $n$ l'ordre de $\mathcal{M}_f$ et $r$ le nombre de singularités à distance finie de $\mathcal{M}_f$ dans $\overline{\mathbb{Q}}$. Par hypothèse la série $f(z)$ appartient à $\mathcal{F}(\mathcal{S})$ et annule un opérateur différentiel $\mathcal{D}\in\mathbb{Q}(z)[d/dz]$ MOM en zéro. Donc, on est en mesure d'appliquer la proposition~\ref{recurrence} et ainsi, il existe un ensemble $\mathcal{S}'\subset\mathcal{S}$ infini tel que: l'ensemble $\mathcal{S}\setminus\mathcal{S}'$ est fini et pour tout $p\in\mathcal{S}'$ et tout entier positif $k$, la série $\Lambda^k_p(f(z))$ annule un opérateur $p$-unipotent $\mathcal{L}_k$ d'ordre $n$ dont le nombre de singularités à distance finie de $\mathcal{L}_{k,p}$ dans $\overline{\mathbb{F}_p}$ est inférieur ou égal à $r$. Soient $p\in\mathcal{S}'$ et $i\in\mathbb{N}$, nous allons montrer par récurrence sur $m\in\mathbb{N}$ qu'il existe $A_{p,i,m}\in\mathbb{F}_p(z)\cap\mathbb{F}_p[[z]]$ de hauteur inférieure ou égale à $2nrp^m$ telle que
	\begin{equation}\label{lambdas}
	\Lambda^i_p(f_{\mid p}(z))=A_{p,i,m}(z)(\Lambda^{i+m}_p(f_{\mid p}(z)))^{p^m}.
	\end{equation}
	  Il est clair que \eqref{lambdas} est vrai pour $m=0$. Maintenant supposons que l'égalité \eqref{lambdas} est vraie pour $m$ et montrons qu'elle l'est aussi pour $m+1$. Donc, d'après notre hypothèse de récurrence il existe $A_{p,i,m}(z)\in\mathbb{F}_p(z)\cap\mathbb{F}_p[[z]]$ de hauteur inférieure ou égale à $2nrp^{m}$ telle que $\Lambda^i_p(f_{\mid p}(z))=A_{p,i,m}(z)(\Lambda^{i+m}_p(f_{\mid p}(z)))^{p^m}$. Comme $p\in\mathcal{S}'$ alors, d'après la proposition~\ref{recurrence}, la série $\Lambda^{i+m}_p(f(z))$ annule un opérateur $p$-unipotent $\mathcal{L}_{i+m}$ d'ordre $n$ dont le nombre de singularités à distance finie de $\mathcal{L}_{i+m,p}$ est inférieur ou égal à $r$. Ainsi, on est en mesure d'appliquer la proposition~\ref{falg} et par conséquent, il existe une fraction rationnelle $A_p(z)\in\mathbb{F}_p(z)\cap\mathbb{F}_p[[z]]$ de hauteur inférieure ou égale à $nrp-1$ telle que $\Lambda^{i+m}_p(f_{\mid p}(z))=A_p(z)(\Lambda^{i+m+1}_p(f_{\mid p}(z)))^p.$ D'où, $(\Lambda^{i+m}_p(f(z)))^{p^m}=A_p(z)^{p^m}(\Lambda^{i+m+1}_p(f_{\mid p}(z)))^{p^{m+1}}$. Il suit donc que, $$\Lambda^{i}_p(f_{\mid p}(z))=A_{p,i,m}(z)A_p(z)^{p^m}(\Lambda^{i+m+1}_p(f_{\mid p}(z))^{p^{m+1}}.$$
	  On pose $A_{p,i,m+1}(z)=A_{p,i,m}(z)A_{p}(z)^{p^m}$. Alors, $A_{p,i,m+1}(z)\in\mathbb{F}_p(z)\cap\mathbb{F}_p[[z]]$ car $A_{p,i,m}(z),A_{p}(z)\in\mathbb{F}_p(z)\cap\mathbb{F}_p[[z]]$. Montrons que la hauteur de $A_{p,i,m+1}(z)$ est inférieure ou égale à $2nrp^{m+1}$. Comme la hauteur de $A_p(z)$ est inférieure ou égale à $nrp-1$ alors la hauteur de $A_p(z)^{p^m}$ est inférieure ou égale à $nrp^{m+1}-p^m$ et ainsi, la hauteur de $A_{p,i,m+1}(z)$ est inférieure ou égale à $2nrp^m+ nrp^{m+1}-p^m$. Mais  $2nrp^m+ nrp^{m+1}-p^m=nrp^{m+1}+p^m(2nr-1)$ et $p^m(2nr-1)\leq nrp^{m+1}$ car $2nr-1\leq nrp$. Par conséquent, la hauteur de $A_{p,i,m+1}(z)$ est inférieure ou égale à $2nrp^{m+1}$.
	  Finalement, on pose $C=2nr$. Donc $C$ est une constante strictement positive indépendante de $p$. Ainsi nous avons montré que pour tout $p\in\mathcal{S}'$ et tout couple d'entiers positifs $i,m$, il existe une fraction rationnelle $A_{p,i,m}(z)\in\mathbb{F}_p(z)\cap\mathbb{F}_p[[z]]$ de hauteur inférieure ou égale à $Cp^{m}$ telle que $$\Lambda^i_p(f_{\mid p}(z))=A_{p,i,m}(z)(\Lambda^{i+m}_p(f_{\mid p}(z))^{p^m}.$$ 
\end{proof}	

\subsection{Démonstration de la proposition \ref{falg}}

Notre but dans cette partie est de montrer la proposition~\ref{falg}. Pour se faire nous aurons besoins de quelques résultats  concernant la théorie des opérateurs différentiels à coefficients dans un corps de caractéristique non nulle. Étant donné un opérateur différentiel $\mathcal{D}$ à coefficients dans $K(z)$, $K$ un corps quelconque, les ensembles $Ker(K(z),\mathcal{D})$ et $Ker(K((z)),\mathcal{D})$ désignent respectivement les solutions de $\mathcal{D}$ qui sont dans $K(z)$ et $K((z))$. Dans le cas où $K$ est un corps de caractéristique $p$, les deux ensembles sont respectivement $K(z^p)$, $K((z^p))$-espaces vectoriels.  Les lemmes~\ref{double1} et \ref{double} nous montrent que lorsque $K$ est un corps de caractéristique $p$ et $\mathcal{D}$ est MOM en zéro alors  $Ker(K((z)),\mathcal{D})$ a dimension égale à 1. Ces lemmes trouve leur motivation dans la remarque suivante.

\begin{remarque} Si $\mathcal{D}$ est un opérateur MOM en zéro à coefficients dans $\mathbb{Q}(z)$, il est bien connu que $Ker(\mathbb{Q}[[z]],\mathcal{D})$ a dimension égale à 1.
\end{remarque}
\begin{lem}\label{double1}
	Soient $\mathcal{D}$ un opérateur différentiel $p$-unipotent et $k$ un corps fini de caractéristique $p$ tel que $\mathcal{D}_p$ est à coefficients dans $k(z)$. Supposons $ker(k(z),\mathcal{D}_p)\neq0$. Alors $dim_{k(z^{p})}Ker(k(z),\mathcal{D}_p)=1$.%Soit $f\in\mathbb{Q}[[z]]$ telle que $
\end{lem}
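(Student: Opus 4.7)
My plan is to exploit the MOM-at-zero hypothesis via an indicial analysis at $z=0$, combined with an iterative construction and the elementary field intersection $k(z) \cap k((z^p)) = k(z^p)$.

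First, I would normalize the given rational solution. Writing $\mathcal{D}_{p,\delta} = \delta^n + c_1(z) \delta^{n-1} + \cdots + c_n(z)$ with $c_i \in k[[z]]$ (possible since $0$ is a regular singular point by Proposition~\ref{Fuchs}), the MOM hypothesis forces the indicial polynomial to equal $x^n$, so $c_i(0) = 0$ for every $i$. Substituting a nonzero formal solution $y = z^r u$ with $u \in k[[z]]^{\times}$ and comparing the coefficient of $z^r$ gives $r^n u(0) = 0$, hence $p \mid r$. Consequently every nonzero element of $Ker(k((z)), \mathcal{D}_p)$ has $z$-adic valuation divisible by $p$. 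Applied to a nonzero $y_0 \in Ker(k(z), \mathcal{D}_p)$, we obtain $y_0 = z^{pa} u_0$ with $u_0 \in k(z) \cap k[[z]]$, $u_0(0) \neq 0$, and $a \in \mathbb{Z}$. Since $z^{pa} \in k(z^p)$ is a $d/dz$-constant in characteristic $p$, replacing $y_0$ by $u_0/u_0(0)$ gives another rational solution generating the same $k(z^p)$-line, so I may assume $y_0 \in k(z) \cap k[[z]]$ with $y_0(0) = 1$.

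Second, I would show that any solution $y \in k(z)$ is a $k[[z^p]]$-multiple of $y_0$. By the same normalization I reduce to $y \in k(z) \cap k[[z]]$ with $y(0) = 1$. The difference $y - y_0$ is a solution vanishing at $0$: either it is zero (and the claim is immediate), or its valuation equals $pb_1$ with $b_1 \geq 1$, and I may write $y - y_0 = z^{pb_1} v_1$ with $v_1 \in k(z) \cap k[[z]]$ a solution (again because $z^{pb_1}$ is a $d/dz$-constant) satisfying $v_1(0) = c_1 \in k^{\times}$. Applying the same decomposition to $v_1 - c_1 y_0$ and iterating, I obtain sequences $(c_k)_{k \geq 1} \subset k$ and $(b_k)_{k \geq 1} \subset \mathbb{Z}_{\geq 1}$ such that the remainder
\[
y - \Bigl(1 + \sum_{k=1}^{N} c_k z^{p(b_1 + \cdots + b_k)}\Bigr) y_0
\]
has $z$-adic valuation at least $p(b_1 + \cdots + b_N + 1)$. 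Letting $N \to \infty$, the $z$-adic limit yields $y = \lambda y_0$ with $\lambda \in k[[z^p]]$.

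Finally, since both $y$ and $y_0$ are rational, $\lambda = y/y_0$ lies in $k(z) \cap k[[z^p]]$. The extension $k(z)/k(z^p)$ is purely inseparable of degree $p$ with basis $\{1, z, \ldots, z^{p-1}\}$, and the same set is a basis of $k((z))$ over $k((z^p))$; uniqueness of these two decompositions gives $k(z) \cap k((z^p)) = k(z^p)$. Hence $\lambda \in k(z^p)$, and since $y$ was arbitrary, $Ker(k(z), \mathcal{D}_p) = k(z^p) \cdot y_0$, which has dimension one over $k(z^p)$. The main technical point is the iterative construction: at each step one must verify that the normalized remainder is still a solution of $\mathcal{D}_p$ (which relies crucially on $z^{pm}$ being a $d/dz$-constant in characteristic $p$) and invoke the indicial bound $p$ on the valuation jump, both of which together force the $z$-adic convergence of the formal series $\lambda$.
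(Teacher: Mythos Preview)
Your argument is correct and takes a genuinely different route from the paper's.  The paper first applies the change of variable $z\mapsto 1/z$ to obtain an operator $\mathcal{D}_{\infty,p}$ that is regular singular at infinity with all exponents there equal, and then argues by contradiction with \emph{polynomial} solutions: if two $k(z^p)$-independent polynomial solutions existed, a minimality argument \`a la Honda would force their degrees to be incongruent modulo $p$, while Honda's Proposition~2.2 (that $-\deg Q\bmod p$ is an exponent at infinity) would force them to be congruent.  You instead stay at $z=0$ and work in $k[[z]]$: the MOM condition makes the indicial polynomial $x^n$, so every nonzero formal solution has valuation divisible by $p$, and an iterative subtraction against the fixed normalized solution $y_0$ produces a factor $\lambda\in k[[z^p]]$; the intersection $k(z)\cap k((z^p))=k(z^p)$ then finishes.

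Conceptually the two proofs exploit the same indicial constraint, read once at infinity as a congruence on degrees and once at $0$ as a divisibility on valuations.  Your version is more self-contained: it avoids the change of variable, the reduction to polynomial solutions, and the two citations to Honda, and in fact uses only the regular-singularity-plus-exponent condition at $0$ rather than the full Fuchsian hypothesis.  The paper's version has the advantage of linking the lemma to the classical literature on algebraic differential equations in characteristic $p$.  One small point worth making explicit in your write-up: in the initial normalization the integer $a$ may be negative (a rational solution can have a pole at $0$), but since $z^{pa}$ is still a $d/dz$-constant the replacement $y_0\mapsto u_0/u_0(0)$ goes through unchanged.
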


\begin{proof}
	Par hypothèse il existe une fraction rationnelle $P(z)\in k(z)$ non nulle telle que $\mathcal{D}_{p}(P(z))=0$.
	Écrivons \[\mathcal{D}_{p}:=a_{0}(z)\frac{d^{n}}{dz^{n}}+a_{1}(z)\frac{d^{n-1}}{z^{n-1}}+\cdots+a_{n-1}(z)\frac{d}{dz}+a_{n}(z),\]
	o\`u les $a_{i}(z)$ appartiennent \`a $k[z].$ 
	Consid\'erons $\mathcal{D}_{\infty,p}$ l'opérateur différentiel \[\mathcal{D}_{\infty,p}:=b_{0}(z)\frac{d^{n}}{dz^{n}}+b_{1}\frac{d^{n-1}}{dz^{n-1}}+\cdots+b_{n-1}(z)\frac{d}{dz}+b_{n}(z),\]
	o\`u le vecteur $(b_{0}(z),\ldots, b_{n}(z))^t\in k[z]^{n+1}$ est égal à
	{\small
		\[
		\frac{(-1)^{n}}{z^{2n}}
		\begin{pmatrix}
		a_{0}(\frac{1}{z}),a_{1}(\frac{1}{z}),\ldots, a_{n-1}(\frac{1}{z}),a_{n}(\frac{1}{z})\\	
		\end{pmatrix}
		\begin{pmatrix}
		1 & 0 & 0 & 0 & \dots & 0 & 0\\
		0 & -z^{2} & 0 & 0 & \ldots & 0 & 0\\
		0 & 2z^{3} & z^{4} & 0 & 0 \ldots & 0 & 0\\
		\vdots & \vdots & \vdots & \vdots & \vdots\\
		0 & * & * & * & \dots & * & 0\\
		0 & * & * & * & \dots & * & (-z^{2})^{n}\\
		\end{pmatrix}
		.\]
	}
	 Cette matrice exprime le vecteur $(1,D,\ldots, D^{n})$ en fonction du vecteur $(1,\frac{d}{dz},\ldots,\frac{d^{n}}{z^{n-1}})$, où $D=-z^2\frac{d}{dz}$. 
	Comme $\mathcal{D}_p$ est singulier régulier en zéro et ses exposants en zéro sont tous égaux, alors l'opérateur différentiel $\mathcal{D}_{\infty,p}$ est singulier régulier en l'infini et ses exposants en l'infini sont tous égaux. De plus, $Ker(k(z),\mathcal{D}_{\infty,p})\neq0$ car $\mathcal{D}_{\infty,p}(P(\frac{1}{z}))=0$ et $P(\frac{1}{z})$ est non nulle. Montrons que $dim_{k(z^{p})}Ker(k(z),\mathcal{D}_{\infty,p})=1$. Pour ce faire on va démontrer que si $P_1(z),P_2(z)\in Ker(k(z),\mathcal{D}_{p,\infty})$ sont non nulles alors il existe $c(z^p)\in k(z^p)$ non nulle telle que $P_2(z)=c(z^p)P_1(z)$. Raisonnons par l'absurde et supposons que $P_1(z),P_2(z)$ sont linéairement indépendantes sur $k(z^p)$. Grâce au lemme~1.1 de \cite[Chap.~III]{Dworkgfunciones}, on a que $(\frac{d}{dz}P_1(z))P_2(z)-P_1(z)(\frac{d}{dz}P_2(z))\neq0$. De plus, il est clair qu'il existe $D_1(z)$ et $D_2(z)$ dans $k[z]$ non nuls tels que $D_1(z)^pP_1(z)$ et $D_2(z)^pP_2(z)$ appartiennent à $k[z]\cap Ker(k(z),\mathcal{D}_{\infty,p})$. Comme la caractéristique du corps $k$ est $p$ alors,
	\begin{multline*}
	\left(\frac{d}{dz}(D_1(z)^pP_1(z))\right)D_2(z)^pP_2(z)-D_1(z)^pP_1(z)\left(\frac{d}{dz}(D_2(z)^pP_2(z))\right)=\\
	D_1(z)^pD_2(z)^p\left[\left(\frac{d}{dz}P_1(z)\right)P_2(z)-P_1(z)\left(\frac{d}{dz}P_2(z)\right)\right]\neq0.
	\end{multline*}
	 Ainsi, d'après le lemme~1.1 de \cite[Chap~III]{Dworkgfunciones}, les polynômes $D_1(z)^pP_1(z)$ et $D_2(z)^pP_2(z)$ sont linéairement indépendants sur $k(z^p)$. Soit $\mathfrak{A}$ l'ensemble constitué des entiers strictement positifs qui sont de la forme $deg(R(z))+deg(S(z))$, où $R(z)$ et $S(z)$ appartiennent à $k[z]\cap Ker(k(z),\mathcal{D}_{\infty,p})$ et sont linéairement indépendants sur $k(z^p)$. Notons que $\mathfrak{A}$ est non vide car $$deg(D_1(z)^pP_1(z))+deg(D_2(z)^pP_2(z))$$ appartient à $\mathfrak{A}$. Par conséquent, il existe deux polynômes $Q_1(z)$ et $Q_2(z)$ linéairement indépendants sur $k(z^p)$ tels que, $Q_1(z)$ et  $Q_2(z)$ sont dans $k[z]\cap Ker(k(z),\mathcal{D}_{\infty,p})$ et $deg(Q_1(z))+deg(Q_2(z))$ est l'élément le plus petit de $\mathfrak{A}$. Démontrons maintenant en utilisant les arguments donnés par Honda dans \cite[proposition~5.1]{honda} que
	\begin{equation}\label{1}
	deg(Q_1(z))\neq deg(Q_2(z))\mod p.
	\end{equation}
	Raisonnons par l'absurde et supposons que $deg(Q_1(z)-deg(Q_2(z))=vp\geq0$. Donc, il existe $c\in k$ non nul tel que $deg(Q_1(z)-cz^{vp}Q_2(z))<deg(Q_1(z))$. Soit $R_1(z)=Q_1(z)-cz^{vp}Q_2(z)$.  Les polynômes $R_1(z)$ et $Q_2(z)$ sont linéairement indépendants sur $k(z^p)$ car $Q_1(z)$ et $Q_2(z)$ le sont aussi. Notons que $R_1(z)$ est solution de $\mathcal{D}_{\infty,p}$ car $k$ est de caractéristique $p$. Alors, $deg(R_1(z))+deg(Q_2(z))$ appartient à $\mathfrak{A}$. Mais, $deg(R_1(z))+deg(Q_2(z))<deg(Q_1(z))+deg(Q_2(z)).$ Cela contredit le fait que $deg(Q_1(z))+deg(Q_2(z))$ est l'élément le plus petit de $\mathfrak{A}$. Donc, $deg(Q_1(z))\neq deg(Q_2(z))\mod p$. Maintenant, d'après la proposition~2.2 de \cite{honda}, on a que $-deg(Q_1(z))\mod p$ et $-deg(Q_2(z))\mod p$ sont exposants en l'infini de $\mathcal{D}_{\infty,p}$. Mais, nous savons que les exposants en l'infini de $\mathcal{D}_{\infty,p}$ sont tous égaux. Alors, 
	\begin{equation}\label{-1}
	deg(Q_1(z))=deg(Q_2(z))\mod p.
	\end{equation}
	Les égalités \eqref{1} et \eqref{-1} se contredisent et par conséquent, $P_1(z)$ et $P_2(z)$ sont linéairement dépendantes sur $k(z^p)$. D'où  il existe $c(z^p)\in k(z^p)$ non nulle telle que $P_2(z)=c(z^p)P_1(z)$. Ainsi, $dim_{k(z^{p})}Ker(k(z),\mathcal{D}_{\infty,p})=1$. Montrons que cette égalité implique que, $dim_{k(z^{p})}Ker(k(z),\mathcal{D}_{p})=1.$
	En effet, soit $B(z)\in Ker(k(z),\mathcal{D}_{p})$ non nulle, alors $B(1/z)\in Ker(k(z),\mathcal{D}_{\infty,p})$, d'o\`u il existe $c(z^{p})\in k(z^{p})$ non nulle telle que $B(1/z)=c(z^{p})P(1/z)$. Par cons\'equent, $B(z)=c(1/z^{p})P(z)$ et ainsi $dim_{k(z^{p})}Ker(k(z),\mathcal{D}_{p})=1.$
\end{proof}

\begin{lem}\label{double}
	Soient $\mathcal{D}$ un opérateur différentiel $p$-unipotent et $k$ un corps fini de caractéristique $p$ tel que $$\mathcal{D}_{p}:=a_{0}(z)\delta^n+a_{1}(z)\delta^{n-1}+\cdots+a_{n-1}(z)\delta+a_{n}(z)\in k[z][\delta],$$ où, pour tout $i\in\{1,\ldots,n\}$, le degré de $a_i(z)$ est inférieur ou égal à $d$. Si $f(z)\in k[[z]]$ est une solution non nulle de $\mathcal{D}_p$ alors il existe un polynôme non nul $P(z)\in k[z]$ de degré inférieur ou égal à $pd-1$ tel que $f(z)=P(z)c(z^p)$ où $c(z)\in k((z))$. 
\end{lem}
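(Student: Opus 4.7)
Ma stratégie consiste à exploiter la décomposition $k((z)) = \bigoplus_{j=0}^{p-1} z^j k((z^p))$ et le fait fondamental qu'en caractéristique $p$, $\delta$ agit par multiplication par $j$ sur la composante $z^j k((z^p))$ (puisque $\delta(z^p) = p z^p = 0$). Écrivant $f(z) = \sum_{j=0}^{p-1} z^j f_j(z^p)$ avec $f_j \in k[[z]]$, on a $\delta^{n-i}(z^j f_j(z^p)) = j^{n-i} z^j f_j(z^p)$, et l'équation $\mathcal{D}_p(f) = 0$ se réécrit
$$\sum_{j=0}^{p-1} z^j Q_j(z) f_j(z^p) = 0, \qquad Q_j(z) := \sum_{i=0}^n j^{n-i} a_i(z) \in k[z],$$
chaque $Q_j$ étant de degré au plus $d$ (quitte à normaliser de sorte que $a_0 = 1$).

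En décomposant $z^j Q_j(z) = \sum_{r=0}^{p-1} z^r \beta_{j,r}(z^p)$ dans la base $\{1, z, \dots, z^{p-1}\}$ sur $k[z^p]$ et en séparant l'équation par résidus modulo $p$, on obtient un système linéaire $B(w) \vec F(w) = 0$ sur $k((w))$, où $w := z^p$, $B(w) := (\beta_{j,r}(w))_{r,j} \in M_{p\times p}(k[w])$ et $\vec F(w) := (f_0(w), \dots, f_{p-1}(w))^t$. Un calcul direct montre que $\deg_w \beta_{j,r} \leq \lfloor (d + j - r)/p \rfloor$.

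L'étape cruciale est d'établir que $\dim_{k((z^p))} Ker(k((z)), \mathcal{D}_p) = 1$, ce qui équivaut à $\dim_{k(w)} \ker B(w) = 1$. Comme $\delta$ est $k((z^p))$-linéaire sur $k((z))$, l'opérateur $\mathcal{D}_p$ l'est aussi. L'hypothèse MOM en zéro garantit que le polynôme indiciel en zéro de $\mathcal{D}_p$ vaut $x^n$; par conséquent toute solution $g \in k((z))$ non nulle vérifie $v_0(g)^n = 0$ dans $k$, d'où $v_0(g) \equiv 0 \pmod p$. Si $f_1, f_2 \in k((z))$ sont deux solutions non nulles, alors $v_0(f_2) - v_0(f_1) = p \mu$ pour un certain $\mu \in \mathbb{Z}$, et en soustrayant $\alpha z^{p\mu} f_1$ à $f_2$ (avec $\alpha \in k^\ast$ annulant le terme dominant) on obtient une solution de valuation strictement supérieure. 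En itérant, soit $f_2$ devient nul en un nombre fini d'étapes, soit la somme des corrections converge dans $k((z^p))$ vers $c(z^p)$ tel que $f_2 = c(z^p) f_1$; dans les deux cas, $f_1$ et $f_2$ sont liées sur $k((z^p))$.

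Ayant $\dim_{k(w)} \ker B(w) = 1$, je construis un générateur $\vec P(w) \in k[w]^p$ à partir des cofacteurs $(p-1) \times (p-1)$ de $B(w)$, en supprimant une ligne $r_0$ pour laquelle le rang reste $p-1$. L'estimation usuelle du degré d'un déterminant, combinée à la sous-additivité de la fonction partie entière, fournit $\deg_w P_j \leq \lfloor ((p-1)d + r_0 - j)/p \rfloor$, et le choix $r_0 = 0$ (ou le plus petit $r_0$ admissible) aboutit à $\deg_w P_j \leq d - 1$. Posant $P(z) := \sum_{j=0}^{p-1} z^j P_j(z^p) \in k[z]$, la relation $\vec F(w) = c(w) \vec P(w)$ (valable puisque le noyau est de dimension $1$) donne $f_j(z) = P_j(z) c(z)$ pour un $c(z) \in k((z))$, et $f(z) = P(z) c(z^p)$ avec $\deg P \leq (p-1) + p(d-1) = pd - 1$. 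L'obstacle principal est l'établissement de la dimension $1$ du noyau, qui s'appuie crucialement sur l'hypothèse MOM en zéro via le polynôme indiciel; la borne sur $\deg P$ suit ensuite par une estimation de cofacteurs soigneuse mais routinière.
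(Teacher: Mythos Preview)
Your approach is genuinely different from the paper's and largely correct. The paper proceeds via the coefficient recurrence $m^n b_m = A_{d-1}(m) b_{m-1} + \cdots + A_0(m) b_{m-d}$: by a linear-dependence argument on the $d+1$ vectors $(f_{ip}, \ldots, f_{ip+d-1})\in k^d$ for $0 \le i \le d$, it peels off from $f$ a polynomial solution of degree $\le pd-1$ and iterates, then invokes the preceding Lemma~\ref{double1} (one-dimensionality of $Ker(k(z), \mathcal{D}_p)$ over $k(z^p)$, proved via the change of variable $z\mapsto 1/z$ and a Honda-type minimal-degree argument) to identify all the polynomial pieces up to factors in $k(z^p)$. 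Your route through the $p$-basis decomposition, the linear system $B(w)\vec F = 0$, and cofactors is more direct; in particular your valuation argument for $\dim_{k((z^p))} Ker(k((z)),\mathcal D_p)=1$ is clean and bypasses Lemma~\ref{double1} entirely.

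There is, however, one genuine gap. You assert that ``le choix $r_0=0$ (ou le plus petit $r_0$ admissible) aboutit à $\deg_w P_j\le d-1$'', but you never check that deleting row $0$ of $B$ keeps the rank equal to $p-1$, and the parenthetical fallback does not help: from $\deg_w P_j \le \lfloor((p-1)d + r_0 - j)/p\rfloor$ one only gets $\deg P \le (p-1)d + r_0$, which exceeds $pd-1$ as soon as $r_0 \ge d$. The fix uses MOM once more. Since the indicial polynomial is $x^n$ (after normalizing $a_0(0)=1$), one has $Q_j(0)=j^n$; hence $\beta_{j,r}(0)=0$ for $r<j$ and $\beta_{j,j}(0)=j^n$. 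Thus $B(0)$ is lower triangular with diagonal $(0,1,2^n,\dots,(p-1)^n)$; deleting row $0$ leaves a $(p-1)\times p$ matrix whose columns $1,\dots,p-1$ form a lower-triangular matrix with nonzero diagonal, so its rank is $p-1$ already at $w=0$, and a fortiori over $k(w)$. With $r_0=0$ now justified, your cofactor estimate gives $\deg_w P_j \le \lfloor((p-1)d - j)/p\rfloor \le d-1$ and $\deg P \le (p-1)+p(d-1)=pd-1$ as claimed.
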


\begin{proof}
Comme $\mathcal{D}_p$ est MOM en zéro alors, il existe des polynômes $A_0(z),\ldots, A_{d-1}(z)\in k[z]$ de degré inférieur ou égal à $n$ tels que la série $\sum_{m\geq0}b_mz^m$ est une solution de $\mathcal{D}_p$ si et seulement si, la suite $(b_m)_{m\geq0}$ vérifie la relation de récurrence 
\begin{equation}\label{eq_recurrence}
m^nb_m=A_{d-1}(m)b_{m-1}+A_{d-2}(m)b_{m-2}+\cdots+A_0(m)f_{m-d}.
\end{equation}
 
Par hypothèse, il existe une série non nulle $f(z)=\sum_{m\geq0}f_mz^m$ telle que $f(z)\in ker(k[[z]],\mathcal{D}_p)$. Si $f_0=0$ alors, d'après la relation de recurrence~\eqref{eq_recurrence}, $f_1=\cdots=f_{p-1}=0$. Ainsi, $f(z)$ est divisible par $z^p$ et la série  $f(z)/z^p$ est une solution de $\mathcal{D}_p$, laquelle est dénotée à nouveau par $f(z)$. On fait tel processus jusqu'à obtenir $f(0)\neq0$. Donc, sans perte de généralité on suppose $f_0\neq0$. Pour chaque $i\in\{0,1,\ldots,d\}$, nous considérons le vecteur $v_i=(f_{ip}, f_{ip+1},\ldots, f_{ip+d-1})\in k^{d}$. Supposons que, pour tout $i\in\{1,\ldots, d\}$, $v_i\neq0$. Par conséquent, il existe $\alpha_0,\alpha_1,\ldots,\alpha_d\in k$, non tous nuls, tels que
\begin{equation}\label{eq_alg_lineal}
\alpha_d v_0+\alpha_{d-1}v_1+\cdots+\alpha_{0}v_d=0.
\end{equation}
Considérons $g(z)=(\alpha_j+\alpha_{j+1}z^p+\cdots+\alpha_{d}z^{p(d-j)})f(z)$. Alors, $g(z)$ est une solution non nulle de $\mathcal{D}_p$. Ecrivons $g(z)=\sum_{m\geq0}g_mz^m$. L'égalité~\eqref{eq_alg_lineal} nous donne que $g_{pd}=g_{pd+1}=\cdots=g_{pd+d-1}=0$. De plus, pour tout entier $s\geq0$, $$s^ng_s=A_{d-1}(s)g_{s-1}+A_{d-2}(s)g_{s-2}+\cdots+A_0(s)g_{s-d}.$$ Alors, il découle que $g_{pd+d}=g_{pd+d+1}=\cdots=g_{p(d+1)-1}=0$. Ainsi, $g=P(z)+z^{p(d+1)}h(z)$, où $P(z)=\sum_{m=0}^{pd-1}P_mz^m$ avec $P_m=g_m$ pour $0\leq m<pd$ et $P_m=0$ pour $m\geq pd$ et $h(z)=\sum_{m\geq0}g_{m+p(d+1)}z^m$. Comme $g_{pd}=g_{pd+1}=\cdots=g_{pd+d-1}=0$ alors la suite $(P_m)_{m\geq0}$ vérifie \eqref{eq_recurrence}. D'où, $P(z)$ est une solution de $\mathcal{D}_p$.  Ainsi, la série $h(z)$ est une solution de $\mathcal{D}_p$. De plus, $$f(z)=(P(z)+z^{p(d+1)}h(z))c_0(z^p),$$
avec $c_0(z^p)=(\alpha_0+\alpha_1z^p+\cdots+\alpha_{d}z^{pd})^{-1}.$

Supposons maintenant qu'il existe $i\in\{1,\ldots, d\}$ tel que $v_i=0$. Comme $(f_m)_{m\geq0}$ vérifie la récurrence~\eqref{eq_recurrence}, il découle que $f_{ip}=f_{ip+1}=\cdots=f_{p(i+1)-1}=0$. Ainsi, $f(z)=P(z)+z^{p(i+1)}h(z)$, où $P(z)=\sum_{m=0}^{ip-1}f_mz^m$ et $h(z)=\sum_{m\geq0}f_{m+p(i+1)}z^m$. Comme $f_{ip}=f_{ip+1}=\cdots=f_{ip+d-1}=0$ alors la suite $(P_m)_{m\geq0}$ avec $P_m=f_m$ pour $0\leq m<ip$ et $P_m=0$ pour $m\geq ip$ vérifie la récurrence~\eqref{eq_recurrence}. Donc, $P(z)$ est une solution de $\mathcal{D}_p$ et ainsi, $h(z)$ est aussi une solution de $\mathcal{D}_p$. 
Par conséquent, l'argument précédent montre que $$f(z)=(P_0(z)+z^{p(d+1)}h_1(z))c_0(z^p),$$ où $P_0(z)$ est un polynôme de degré inférieur ou égal à $pd-1$ qui annule $\mathcal{D}_p$, $h_1(z)$ est une série qui annule aussi $\mathcal{D}_p$ et $c_0(z)\in k((z))$. 

Comme $h_1(z)$ est solution de $\mathcal{D}_p$, d'après l'argument précédent, on en déduit que $h_1(z)=(P_1(z)+z^{p(d+1)}h_2(z))c_1(z^p)$, où $P_1(z)$ est un polynôme de degré inférieur ou égal à $pd-1$ qui annule $\mathcal{D}_p$, $h_2(z)$ est une série qui annule $\mathcal{D}_p$ et $c_1(z)\in k((z))$. Ainsi, par récurrence, on a $$f(z)=\sum_{i\geq0}z^{ip(d+1)}P_i(z)d_i(z^p),$$ où $P_i(z)$ est un polynôme de degré inférieur ou égal à $pd-1$ et $d_i(z)\in k((z))$. Il existe $i\geq0$ tel que $P_i(z)$ est non nul car $f(z)$ est non nulle. Sans perte de généralité, supposons $P_0(z)$ non nul. Donc, le lemme~\ref{double1}, entraîne que, pour tout $i\geq0$, il existe $r_i(z)\in k(z^p)$ tel que $P_i(z)=P_0(z)r_i(z^p)$. Donc, il existe $c(z)\in k((z^p))$ telle que $f(z)=P_0(z)c(z^p).$ 
\end{proof}

Nous sommes prêts maintenant à démontrer la proposition~\ref{falg}.

\begin{proof}[Démonstration de la proposition \ref{falg}]
	  Notons que si la série $f_{\mid p}(z)$ est nulle alors la proposition~\ref{falg} découle tout de suite. Supposons que $f_{\mid p}(z)$ est non nulle. Comme $\mathcal{D}$ est $p$-unipotent alors, par définition, $\mathcal{D}_p$ est fuchsien. Ainsi, $$\mathcal{D}_{p}:=a_{0}(z)\delta^n+a_{1}(z)\delta^{n-1}+\cdots+a_{n-1}(z)\delta+a_{n}(z)\in k[z][\delta],$$ où le degré de $a_i(z)$ est inférieur ou égal à $nr$ pour $0\leq i\leq n$. De plus, $\mathcal{D}_p$ est MOM en zéro. Alors, le lemme~\ref{double} nous garanti l'existence d'un polynôme $P(z)$ non nul de degré inférieur ou égal à $pnr-1$ tel que  $f_{\mid p}(z)=P(z)c(z^p)$. Notons que $\Lambda_p(P(z)c(z^p))=\Lambda_p(P(z))c(z)$ et ainsi, $\Lambda_p(f_{\mid p}(z))=\Lambda_p(P(z))c(z)$ et comme $f_{\mid p}(z)\in\mathbb{F}_p[z]$ alors, $\frac{\Lambda_p(f_{\mid p}(z))^p}{\Lambda_p(P(z^p))}=c(z^p)$. Par conséquent, $f_{\mid p}(z)=\frac{P(z)}{\Lambda_p(P(z^p))}\Lambda_p((f_{\mid p}(z))^p$. Finalement, comme $deg(\Lambda_p(P(z^p)))\leq deg(P(z))\leq nrp-1$ alors la hauteur de $\frac{P(z)}{\Lambda_p(P(z^p))}$ est inférieure ou égale à $nrp-1$. 
\end{proof} 

\subsection{Démonstration de la proposition \ref{recurrence}}\label{subsec_preuve_prop_recurrence}
Le but de cette partie est de démontrer la proposition~\ref{recurrence}. La démonstration que nous faisons de cette proposition repose sur la théorie des équations différentielles $p$-adiques.  

\subsubsection{Équations différentielles $p$-adiques} La démonstration de la proposition~\ref{recurrence} s'appuie essentiellement dans le lemme~\ref{Deuxième pas}. Avant de l'énoncer nous commençons avec la définition des ensembles $E_{0,p}$, $\mathscr{M}_p$, des disques singuliers réguliers et de la propriété $(\textbf{P})_{p,r,n}$. Nous rappelons que $\mathbb{C}_p$ est le complété de la clôture algébrique de $\mathbb{Q}_p$. Nous commençons par rappeler la notion de structure de Frobenius forte.

\begin{defi}[Structure de Frobenius forte]\label{def:SFF}
	Soient $\mathcal{H}\in\mathbb{Q}(z)[\delta]$, $A(z)$ la matrice compagnon de $\mathcal{H}$ et $p$ un nombre premier. On dit que l'opérateur différentiel $\mathcal{H}$ est muni d'une structure de Frobenius forte pour $p$ de période $h$ s'il existe une matrice inversible $H(z)$ à coefficients dans $E_p$ de même taille que $A$ et un entier $h>0$ tels que $$\delta H(z)=A(z)H(z)-p^hH(z)A(z^{p^h}).$$
\end{defi}

\begin{defi}
$E_{0,p}$ est l'ensemble des éléments analytiques $E_p$ qui n'ont pas de pôle dans le disque ouvert $D(0,1)$.		
\end{defi}
En particulier, l'ensemble $E_{0,p}$ est un anneau contenu dans $\mathbb{C}_p[[z]]$. La définition suivante a été introduite par Christol dans \cite[Chap~4]{christolpadique} dans un contexte plus large.
\begin{defi}\label{R}Soit $p$ un nombre premier, nous notons $\mathscr{M}_p$ l'ensemble des matrices carrées $L$ à coefficients dans $E_p$ qui satisfont aux conditions suivantes:	
	\begin{enumerate}[label=\roman*)]
		\item $L$ est une matrice dont les coefficients appartiennent à $E_{0,p}$.
		\item Les valeurs propres de la matrice $L(0)$ sont toutes égales à zéro. 
		\item Il existe une matrice inversible $U_t(z)$, dont les coefficients sont analytiques dans le disque générique ouvert $D(t,1)$, telle que $\delta(U)=LU$, où $\delta=z\frac{d}{dz}$ et $t$ est un élément transcendante sur $\mathbb{C}_p$ indépendant de $z$.
	\end{enumerate}
\end{defi}

\begin{remarque}\label{rem_motiv_m_p}
Notre motivation pour introduire l'ensemble $\mathscr{M}_p$ est donnée par le fait suivant.  Sous les hypothèses du théorème~\ref{practico}, on a que $f(z)$ annule un opérateur différentiel $\mathcal{H}$ muni d'une structure de Frobenius forte pour tout $p\in\mathcal{S}$ ainsi qu'un opérateur différentiel $\mathcal{D}$ qui est MOM en zéro. Soient $A$ la matrice compagnon de $\mathcal{H}$ et $B$ la matrice compagnon de $\mathcal{D}$. Alors, $A$ et $B$ n'appartiennent pas nécessairement à $\mathscr{M}_p$ car  les exposants en zéro de $\mathcal{H}$ ne sont pas forcement tous égaux à zéro et on ne sait pas si $\mathcal{D}$ a une base  de solutions dans le disque générique de rayon 1. Cependant, notre observation est que, pour presque tout $p\in\mathcal{S}$, la matrice $M_{f,\delta}$ est dans $\mathscr{M}_p$, où $M_{f,\delta}$ est la matrice compagnon de l'opérateur différentiel obtenu après avoir réécrit $z^n\mathcal{M}_f$ en fonction de $\delta$. Cela est prouvé dans le lemme suivant.

\begin{lem}\label{lem_minimal}
Soient $\mathcal{S}$ un ensemble infini de nombres premiers. Si $f(z)\in\mathcal{F}(\mathcal{S})$ annule un opérateur $\mathcal{D}\in\mathbb{Q}(z)[\delta]$ MOM en zéro alors, pour presque tout $p\in\mathcal{S}$, $M_{f,\delta}$ appartient à $\mathscr{M}_p$.
\end{lem}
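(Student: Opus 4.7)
Le plan consiste à vérifier directement les trois conditions (i), (ii), (iii) de la définition~\ref{R} pour la matrice compagnon $M_{f,\delta}$, en exploitant systématiquement le fait que l'opérateur minimal $\mathcal{M}_f$ divise à droite dans $\mathbb{Q}(z)[d/dz]$ à la fois l'opérateur $\mathcal{H}$ (muni d'une structure de Frobenius forte pour tout $p\in\mathcal{S}$) et l'opérateur $\mathcal{D}$ (MOM en zéro). De la divisibilité $\mathcal{D}=\mathcal{T}_1\mathcal{M}_f$, on récupère, comme cela est rappelé en introduction de la partie~\ref{enonce}, que $\mathcal{M}_f$ hérite lui-même de la propriété MOM en zéro. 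C'est cette propriété de $\mathcal{M}_f$ qui pilotera les vérifications de (i) et (ii), tandis que la divisibilité $\mathcal{H}=\mathcal{T}_2\mathcal{M}_f$ fournira (iii).

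Pour (i) et (ii), j'écrirais $z^n\mathcal{M}_f=\delta^n+b_1(z)\delta^{n-1}+\cdots+b_n(z)$ avec $b_i(z)\in\mathbb{Q}(z)$. La proposition~\ref{Fuchs} appliquée à $\mathcal{M}_f$, régulier singulier en zéro, entraîne que $b_i(z)\in\mathbb{Q}[[z]]$, donc que les $b_i$ n'ont pas de pôle en zéro. Leurs autres pôles, en nombre fini, étant des nombres algébriques, la valeur absolue $p$-adique de chacun vaut $1$ pour presque tout $p$, de sorte qu'ils tombent hors de $D(0,1)\subset\mathbb{C}_p$. Pour ces $p$, chaque $b_i$ appartient à $E_{0,p}$, d'où $M_{f,\delta}$ à coefficients dans $E_{0,p}$, ce qui règle (i). La condition (ii) sera alors immédiate : le polynôme caractéristique de $M_{f,\delta}(0)$ coïncide avec le polynôme indiciel de $\mathcal{M}_f$ en zéro, lequel vaut $x^n$ puisque tous les exposants en zéro sont nuls.

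Le cœur du travail est la vérification de (iii). Comme $f(z)\in\mathcal{F}(\mathcal{S})$, l'opérateur $\mathcal{H}$ admet, grâce à sa structure de Frobenius forte, une base de solutions analytiques dans le disque générique $D(t,1)$ (fait rappelé au début de cette partie). Puisque toute solution de $\mathcal{M}_f$ est solution de $\mathcal{H}$ et que l'espace des solutions de $\mathcal{M}_f$ est de dimension $n$, on récupère automatiquement $n$ solutions linéairement indépendantes de $\mathcal{M}_f$ analytiques dans $D(t,1)$. En notant $Y(z)$ la matrice fondamentale correspondante pour le système $Y'=AY$ associé à $\mathcal{M}_f$ via $d/dz$, la remarque~\ref{rem_g_n} donne $\tilde{Y}(z)=G_n(z)^{-1}Y(z)$ comme matrice fondamentale pour $\delta\tilde{Y}=M_{f,\delta}\tilde{Y}$. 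Comme $G_n$ n'a de pôles qu'en $z=0$ et que $|t|=1$, $G_n$ et $G_n^{-1}$ sont analytiques dans $D(t,1)$ ; donc $\tilde{Y}$ y est analytique et inversible, ce qui fournit (iii).

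L'étape la plus délicate est le passage de la base de solutions de $\mathcal{H}$ dans $D(t,1)$ à celle de $\mathcal{M}_f$ : il est conceptuellement immédiat via l'inclusion des espaces de solutions, mais c'est précisément lui qui transporte l'information forte (structure de Frobenius de $\mathcal{H}$) vers l'opérateur minimal $\mathcal{M}_f$, qui lui n'est \emph{a priori} pas supposé muni d'une telle structure. Il faudra enfin regrouper en un seul ensemble fini les ``mauvais'' premiers : ceux pour lesquels un pôle algébrique d'un $b_i$ entre dans $D(0,1)$, et ceux qu'il faut éventuellement exclure pour que la divisibilité et la réduction fonctionnent correctement. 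On obtiendra alors $M_{f,\delta}\in\mathscr{M}_p$ pour presque tout $p\in\mathcal{S}$.
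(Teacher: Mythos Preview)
Ta démonstration est correcte et suit essentiellement la même ligne que celle de l'article : vérification de~(i) via le caractère régulier singulier de $\mathcal{M}_f$ en zéro (hérité de $\mathcal{D}$) et le fait que les pôles non nuls des $b_i$ sont de valeur absolue $p$-adique~$1$ pour presque tout~$p$ ; vérification de~(ii) via la factorisation des polynômes indiciels induite par $\mathcal{D}=\mathcal{T}_1\mathcal{M}_f$ ; vérification de~(iii) en transférant à $\mathcal{M}_f$, via $\mathcal{H}=\mathcal{T}_2\mathcal{M}_f$, la base de solutions dans $D(t,1)$ que la structure de Frobenius forte garantit pour $\mathcal{H}$. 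La seule nuance est que tu passes explicitement par le système en $d/dz$ puis conjugues par $G_n$ pour revenir à $\delta$, tandis que l'article travaille directement en $\delta$ en invoquant les propositions~4.1.2, 4.6.4 et~4.7.2 de \cite{Gillesmoduldiff} ; c'est une variation de présentation sans incidence sur le fond.
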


 \begin{proof}
Soit $\mathcal{S}'$ l'ensemble de nombres premiers dans $\mathcal{S}$ tel que $\mathcal{M}_{f,\delta}$ est à coefficients dans $E_{0,p}$. Comme $\mathcal{M}_{f,\delta}$ est à coefficients dans $\mathbb{Q}(z)$ alors  $\mathcal{S}\setminus\mathcal{S}'$ est fini. De plus, l'égalité $\mathcal{D}=\mathcal{P}\mathcal{M}_{f,\delta}$ entraîne que $P_{\mathcal{D}}=P_{\mathcal{P}}P_{\mathcal{M}_{f,\delta}},$ où $P_{\mathcal{D}}$ est le polynôme indiciel de $\mathcal{D}$ et $\mathcal{P}_{\mathcal{M}_{f,\delta}}$ est le polynôme indiciel de $\mathcal{M}_{f,\delta}$. Par définition de MOM en zéro, toutes les racines de $P_{\mathcal{D}}$ sont égales à zéro. Donc, les racines de $\mathcal{P}_{\mathcal{M}_{f,\delta}}$ sont égales à zéro. Par conséquent,  les valuers propres de $M_{f,\delta}(0)$ sont toutes égales à zéro. Soit $\mathcal{H}$ muni d'une structure de Frobenius forte pour $p\in\mathcal{S}$. Alors, les propositions~4.1.2, 4.6.4 et 4.7.2 de \cite{Gillesmoduldiff} montrent qu'il existe une matrice inversible $W_t(z)$ dont les coefficients sont analytiques dans le disque générique ouvert $D(t,1)$ telle que $\delta W_t=AW_t$, où $A$ est la matrice compagnon de $\mathcal{H}$. Donc, de l'égalité $\mathcal{H}=\mathcal{T}\mathcal{M}_{f,\delta}$ découle qu'il existe une matrice inversible $U_t(z)$ dont les coefficients sont analytiques dans le disque générique ouvert $D(t,1)$ telle que $\delta U_t=M_{f,\delta}U_t$.
\end{proof}

\end{remarque}

Nous rappelons les notions de \emph{pôle} pour un élément de $E_p$ et de \emph{disque singulier régulier} pour une matrice à coefficients dans $E_p$. Pour chaque $\alpha\in\overline{\mathbb{F}_p}$, l'anneau $E_p^{\alpha}$ est la fermeture dans le cors $E_p$ de l'anneau des fractions rationnelles de $\mathbb{C}_p(z)$ dont tous les pôles appartiennent à $\vartheta_{\mathbb{C}_p}$ et ont $\alpha$ pour image dans le corps $\overline{\mathbb{F}_p}$ et qui sont nulles à l'infini. D'une manière analogue l'anneau $E^{\infty}_p$ est la fermeture dans le corps $E_p$ de l'ensembles des fractions rationnelles de $\mathbb{C}_p(z)$ qui n'ont pas de pôle dans $\vartheta_{\mathbb{C}_p}$. Donc, grâce au théorème de Mittag-Leffler, voir par exemple théorème~2.1.6 de \cite{Gillesmoduldiff}, à tout élément $a(z)\in E_p$ on peut associer de manière unique des éléments analytiques $a_{\alpha}(z)\in E^{\alpha}_p$, pour $\alpha$ parcourant l'ensemble $\overline{\mathbb{F}_p}\cup\{\infty\}$, tels que la famille $\{a_{\alpha}(z)\}$ tende vers zéro selon le filtre des complémentaires des parties finies de $\overline{\mathbb{F}_p}\cup\{\infty\}$ et qui vérifient $$a(z)=\sum\limits_{\alpha\in\overline{\mathbb{F}_p}\cup\{\infty\}}a_{\alpha}(z),\quad |a(z)|=sup|a_{\alpha}(z)|.$$  Soient $\gamma\in\vartheta_{\mathbb{C}_p}\cup\{\infty\}$ et $a(z)\in E_p$. Le point $\gamma$ est un \emph{pôle} de $a(z)$ si dans l'écriture donnée par le théorème de Mittag-Leffler l'élément qui correspond à l'anneau $E^{\overline{\gamma}}_p$ est non nul. 

Pour $\gamma\in\vartheta_{\mathbb{C}_p}$, nous notons $D_{\gamma}$ le disque ouvert de centre $\gamma$ et de rayon 1 et $D_{\infty}$ est l'ensemble des éléments de $\mathbb{C}_p$ qui ont une norme supérieure à 1. Remarquons que $D_{\gamma}=D_{\beta}$ si et seulement si $|\gamma-\beta|_p<1$. Nous avons aussi que $|\gamma-\beta|_p=1$ si, et seulement si $D_{\gamma}\cap D_{\beta}=\emptyset$. Soit $A\in M_n(E_p)$. La matrice $A$ est \emph{singulière régulière} dans le disque $D_{\gamma}$, s'il existe une matrice $A_{\gamma}$ telle que, les matrices $A$ et $A_{\gamma}$ sont $E_p$-équivalentes\footnote{Deux matrices $A,B\in M_n(E_p)$ sont $E_p$-équivalentes s'il existe $H\in Gl_n(E_p)$ telle que $\frac{d}{dz}H=AH-HB$. La relation de $E_p$- équivalence est une relation d'équivalence.} et il existe $\beta_{\gamma}\in D_{\gamma}$ tel que la matrice $(z-\beta_{\gamma})A_{\gamma}$ n'a pas de pôle dans le disque $D_{\gamma}$. La matrice $A$ est \emph{singulière régulière} dans le disque $D_{\infty}$, s'il existe une matrice $A_{\infty}$ telle que les matrice $A$ et $A_{\infty}$ sont $E_p$-équivalentes et la matrice $zA_{\infty}$ n'a pas de pôle dans le disque $D_{\infty}$.  
\begin{remarque}\label{trans}
	Soient $A, B\in M_n(E_p)$. Supposons que $A$ et $B$ sont $E_p$-équivalentes. La matrice $A$ est singulière régulière dans le disque $D_{\gamma}$ si et seulement si la matrice $B$ est singulière régulière dans le disque $D_{\gamma}$. En effet, si $A$ est singulière régulière dans le disque $D_{\gamma}$ alors il existe une matrice $A_{\gamma}$ telle que $A$ et $A_{\gamma}$ sont $E_p$-équivalentes et il existe $\beta_{\gamma}\in D_{\gamma}$ tel que $(z-\beta_{\gamma})A_{\gamma}$ n'a pas de pôle dans le disque $D_{\gamma}$. Comme $A$ et $B$ sont $E_p$-équivalentes alors par transitivité  les matrice $B$ et $A_{\gamma}$ sont $E_p$-équivalentes. Par conséquent, la matrice $B$ est singulière régulière dans le disque $D_{\gamma}$.  Maintenant, si $D_{\gamma}$ est un disque singulier régulier de $B$ alors, en appliquant le même argument précédent on obtient que la matrice $A$ est singulière régulière dans le disque $D_{\gamma}$.  
\end{remarque}

\begin{lem}\label{singularites}
	Soient $\mathcal{L}\in\vartheta_{E_p}[d/dz]$ un opérateur différentiel unitaire et $L(z)$ sa matrice compagnon. Si $L(z)$ a exactement $r$ disques singuliers à distance finie alors le nombre de singularités de l'opérateur $\mathcal{L}_{p}$ dans $\overline{\mathbb{F}_p}$ est inférieur ou égal à $r$.
\end{lem}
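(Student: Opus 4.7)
Mon plan serait de construire une application injective de l'ensemble des singularités de $\mathcal{L}_p$ dans $\overline{\mathbb{F}_p}$ vers l'ensemble des disques singuliers à distance finie de $L(z)$ ; la majoration voulue en résulterait directement.

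J'écrirais d'abord $\mathcal{L}=\frac{d^n}{dz^n}+a_1(z)\frac{d^{n-1}}{dz^{n-1}}+\cdots+a_n(z)$ avec $a_i(z)\in\vartheta_{E_p}$. Les seules entrées de la matrice compagnon $L(z)$ susceptibles d'avoir un pôle sont les $-a_i(z)$, donc un disque $D_\gamma$ (avec $\gamma\in\vartheta_{\mathbb{C}_p}$) est singulier pour $L(z)$ si et seulement si l'un au moins des $a_i(z)$ y possède un pôle au sens de la définition rappelée ci-dessus. De même, une singularité de $\mathcal{L}_p$ dans $\overline{\mathbb{F}_p}$ est un point $\alpha\in\overline{\mathbb{F}_p}$ qui est pôle d'au moins une des fractions rationnelles $(a_i)_p\in\overline{\mathbb{F}_p}(z)$.

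Le cœur du plan sera l'étape suivante. Soit $\alpha\in\overline{\mathbb{F}_p}$ une singularité de $\mathcal{L}_p$, et fixons $i$ tel que $(a_i)_p$ ait un pôle en $\alpha$. J'appliquerais à $a_i$ la décomposition de Mittag-Leffler $a_i=\sum_{\beta\in\overline{\mathbb{F}_p}\cup\{\infty\}}(a_i)_\beta$ avec $(a_i)_\beta\in E^\beta_p$. Comme les $(a_i)_\beta$ tendent vers zéro selon le filtre des complémentaires des parties finies, seulement un nombre fini d'entre eux sont de norme $1$, et la réduction modulo $\mathfrak{m}_p$ se calcule par une somme finie $(a_i)_p=\sum_\beta((a_i)_\beta)_p$ dans $\overline{\mathbb{F}_p}(z)$. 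Par définition de $E^\beta_p$, la réduction $((a_i)_\beta)_p$ ne peut avoir de pôles qu'au point $\beta$ (ou est un polynôme si $\beta=\infty$). Le pôle en $\alpha$ de $(a_i)_p$ ne peut donc provenir que du terme $((a_i)_\alpha)_p$, ce qui force $(a_i)_\alpha$ à être non nul. Par conséquent, pour tout $\gamma\in\vartheta_{\mathbb{C}_p}$ relevant $\alpha$, le disque $D_\gamma$ est un disque singulier de $L(z)$ à distance finie.

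Pour conclure, l'application $\alpha\mapsto D_\gamma$ est bien définie (puisque $D_\gamma$ ne dépend que de $\bar\gamma=\alpha$) et injective, deux points distincts de $\overline{\mathbb{F}_p}$ donnant toujours des disques disjoints. L'image étant incluse dans l'ensemble des $r$ disques singuliers de $L(z)$ à distance finie, il en résultera que le nombre de singularités de $\mathcal{L}_p$ dans $\overline{\mathbb{F}_p}$ est au plus $r$. Le point le plus délicat sera l'étape intermédiaire : il faudra justifier avec soin que la réduction modulo $\mathfrak{m}_p$ « localise » correctement les pôles, au sens où la réduction d'une composante $(a_i)_\beta\in E^\beta_p$ ne peut contribuer à un pôle de $(a_i)_p$ qu'au point $\beta$. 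Cela reposera sur la définition des anneaux $E^\beta_p$ et sur la description de l'isomorphisme $\phi:\vartheta_{E_p}/\mathfrak{m}_p\to\overline{\mathbb{F}_p}(z)$ rappelée au début de cette partie.
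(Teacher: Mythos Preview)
Your proposal is correct and follows essentially the same route as the paper: both arguments rest on the Mittag-Leffler decomposition of the coefficients $a_i\in\vartheta_{E_p}$ together with the observation that the reduction of a component in $E_p^\beta$ can only contribute a pole at $\beta$. The paper phrases the conclusion as the containment $\{\text{singularit\'es de }\mathcal{L}_p\}\subset\{\overline{\gamma_1},\ldots,\overline{\gamma_r}\}$ where $D_{\gamma_1},\ldots,D_{\gamma_r}$ are the singular discs, whereas you package the same argument as an injection $\alpha\mapsto D_\gamma$; these are two formulations of the same proof.
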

 
\begin{proof}
	Soient $D_{\gamma_1},\ldots, D_{\gamma_r}$ les disques singuliers de $L(z)$. Par hypothèse ils sont tous différents, d'où pour $i\neq j$ on a que $|\gamma_i-\gamma_j|=1$, ce qui revient à dire que $\overline{\gamma_i}\neq\overline{\gamma_j}$. Montrons que les singularités de $\mathcal{L}_p$ appartiennent à $\{\overline{\gamma_1},\ldots,\overline{\gamma_r}\}$. Soit $A_i(z)$ un des coefficients de $\mathcal{L}$ alors les pôles de $A_i(z)$ sont dans $D_{\gamma_1}\cup\ldots\cup D_{\gamma_r}$. Par conséquent, d'après le théorème de Mitagg-Leffler, les pôles de $\overline{A_i(z)}$ appartiennent à $\{\overline{\gamma_1},\ldots,\overline{\gamma_r}\}$. D'où les pôles de $\mathcal{L}_p$ sont dans $\{\overline{\gamma_1},\ldots,\overline{\gamma_r}\}$.
\end{proof}

Nous rappelons que pour un opérateur unitaire $\mathcal{L}\in E_{p}[d/dz]$ d'ordre $n$ nous dénotons par $\mathcal{L}_{\delta}\in E_{p}[\delta]$ l'opérateur obtenu après avoir réécrit $z^n\mathcal{L}$ en fonction de $\delta$. Nous notons par $L(z)$ la matrice compagnon de $\mathcal{L}$ et par $L_{\delta}(z)$ la matrice compagnon de $\mathcal{L}_{\delta}$.
 \begin{defi}\label{propriedadP}
 	Soit $f(z)\in 1+z\mathbb{C}_p[[z]]$. Nous disons que $f(z)$ satisfait la propriété $(\textbf{P})_{p,r,n}$ s'il existe un opérateur différentiel unitaire $\mathcal{L}\in E_{p}[d/dz]$, tel que:
 	\begin{enumerate}
 		\item L'opérateur $\mathcal{L}$ est annulé par $f(z)$.
 		\item L'ordre de $\mathcal{L}$ est $n$.
 		\item La matrice $L_{\delta}(z)$ appartient à $\mathscr{M}_p$.
 		\item L'opérateur $\mathcal{L}$ est $p$-unipotent.
 		\item Tous les disques singuliers de la matrice $L(z)$ sont singuliers réguliers.
 		\item La matrice $L(z)$ a exactement $r$ disques singuliers réguliers \emph{à distance finie}.\footnote{Le disque ouvert de centre $\gamma\in\mathbb{C}_p$ et de rayon 1 est à distance finie si la norme de $\gamma$ est inférieure ou égale à 1.}	 		%\item La matrice $L(z)$ a exactement $r$ disques singuliers réguliers \emph{à distance finie}.\footnote{Le disque ouvert de centre $\gamma\in\mathbb{C}_p$ et de rayon 1 est à distance finie si la norme de $\gamma$ est inférieure ou égale à 1.}	
 	\end{enumerate}
 \end{defi}

\begin{lem}\label{Deuxième pas}
	Soit $f(z)\in 1+z\mathbb{C}_p[[z]]$. Si $f(z)$ vérifie la propriété $(\textbf{P})_{p,r,n}$ alors $\Lambda_p(f(z))$ vérifie aussi la propriété $(\textbf{P})_{p,r,n}$.
\end{lem}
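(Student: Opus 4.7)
The plan is to use the Frobenius antecedent theory of Christol~\cite{christolpadique} to exhibit an operator $\mathcal{L}'$ of order $n$ satisfied by $\Lambda_p(f)$ and inheriting the six conditions of $(\textbf{P})_{p,r,n}$ from $\mathcal{L}$. Since $L_\delta \in \mathscr{M}_p$ (nilpotent residue at $0$, coefficients in $E_{0,p}$, full basis of solutions in the generic disk $D(t,1)$), the Frobenius antecedent theorem produces an invertible matrix $H(z)$ with $H,H^{-1} \in E_{0,p}$ together with a matrix $B(u) \in \mathscr{M}_p$ such that the substitution $Y = H(z)\,V(z^p)$ transforms $\delta_z Y = L_\delta Y$ into $\delta_u V = B(u)\,V$; equivalently, $\delta_z H = L_\delta H - p\,H\,B(z^p)$.

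To produce the operator for $\Lambda_p(f)$, I would decompose $f(z) = \sum_{s=0}^{p-1} z^s g_s(z^p)$ with $g_0 = \Lambda_p f$, expand $H(z) = \sum_{s=0}^{p-1} z^s H^{(s)}(z^p)$, and use the generic-disk representation $Y_f = H(z)\,V(z^p)\,c$ of the solution vector $Y_f = (f, \delta f, \ldots, \delta^{n-1} f)^t$ for some $c \in \mathbb{C}_p^n$. Applying $\Lambda_p$ componentwise and invoking the commutation $\Lambda_p \delta = p\,\delta\,\Lambda_p$, one checks that $\tilde{Y} = (\Lambda_p f, \delta \Lambda_p f, \ldots, \delta^{n-1} \Lambda_p f)^t$ satisfies a system $\delta \tilde{Y} = \tilde{B}\,\tilde{Y}$, where $\tilde{B}$ is gauge-equivalent to $B$ via the matrix $H^{(0)} = \Lambda_p H$ followed by the constant conjugation by $D = \mathrm{diag}(1, p, \ldots, p^{n-1})$. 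I take $\mathcal{L}'$ to be the scalar operator with $\delta$-companion $\tilde{B}$; then $\mathcal{L}'(\Lambda_p f) = 0$ and $\mathcal{L}'$ has order $n$, giving conditions (1) and (2), and obviously $\Lambda_p f \in 1 + z\mathbb{C}_p[[z]]$.

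The remaining conditions transfer through this chain of equivalences. Conditions (3) and (4) follow because $\tilde{B}$ is $E_p$-equivalent to $B$ and hence inherits $\mathscr{M}_p$-membership and $p$-unipotence: the reduction of $\tilde{B}$ is nonzero and fuchsian, with nilpotent residue at $0$. For conditions (5) and (6), the $E_p$-equivalence between $L_\delta$ and $B(z^p)$ via $H$ identifies their finite singular disks (Remark~\ref{trans}), since $H, H^{-1} \in E_{0,p}$ contribute no singular disks outside the origin; moreover, via Mittag-Leffler and the observation that $D_\gamma = D_\beta$ iff $\overline{\gamma} = \overline{\beta}$, the substitution $u = z^p$ induces a bijection between the finite singular disks of $B(u)$ and those of $B(z^p)$ through the inverse Frobenius map $\overline{\gamma'} \mapsto (\overline{\gamma'})^p$ on $\overline{\mathbb{F}_p}$. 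The main obstacle lies in ensuring that regular singularity is preserved under the ramified substitution $u = z^p$: starting from a factorization $(u - \beta^p)\,A$ analytic on the antecedent's disk, one uses $z^p - \beta^p = (z - \beta)(z^{p-1} + \cdots + \beta^{p-1})$ together with the fact that all exponents are zero to control the logarithmic type of the local solutions at the Frobenius preimage disk, so that regular singularity of $B$ pulls back to regular singularity of $B(z^p)$. Combined with a careful verification that $H^{(0)}$ is well-behaved enough to preserve $\mathscr{M}_p$-membership for $\tilde{B}$, this is the technical heart of the proof.
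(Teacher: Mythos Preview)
Your overall strategy matches the paper's: use Christol's weak Frobenius antecedent (Lemme~\ref{10}) to produce $B\in\mathscr{M}_p$ with $L_\delta$ and $pB(z^p)$ equivalent, extract from $B$ a scalar operator of order $n$ annihilating $\Lambda_p(f)$ via the commutation $\Lambda_p\circ\delta=p\,\delta\circ\Lambda_p$, and then verify conditions (3)--(6) for this new operator.

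However, two of your claimed transfers are not justified as stated. First, you assert that conditions (3) and (4) follow because $\tilde B$ is $E_p$-equivalent to $B$. But membership in $\mathscr{M}_p$ is \emph{not} gauge-invariant: conditions (i) and (ii) of D\'efinition~\ref{R} (coefficients in $E_{0,p}$, nilpotent residue at $0$) can be destroyed by an $E_p$-gauge. The paper handles (3) by explicitly identifying the uniform part of the new companion matrix as $D^{-1}\Lambda_p(Y_L)D$ with $D=\mathrm{diag}(1,p,\dots,p^{n-1})$ and checking directly that the entries have radius of convergence $\geq 1$ (Deuxi\`eme pas). Likewise, $p$-unipotence requires the new operator to have coefficients in $\vartheta_{E_p}$; the paper obtains this not from the equivalence but from the Dwork--Frobenius theorem (the operator has a full basis in $D(t,1)$, hence Gauss norm $\leq 1$), and then invokes Katz--Honda for fuchsianness of the reduction (Troisi\`eme pas). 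Your sketch does not supply these ingredients, and your closing remark about ``careful verification that $H^{(0)}$ is well-behaved'' does not remedy this, since the relevant gauge is the constant matrix $D$, not $\Lambda_p H$.

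Second, your argument for the transfer of regular singularity under $u=z^p$ via the factorisation $z^p-\beta^p=(z-\beta)(z^{p-1}+\cdots+\beta^{p-1})$ does not work: all $p$ roots $\zeta\beta$ (with $\zeta^p=1$) lie in the \emph{same} residue disk as $\beta$ since $|\zeta-1|<1$, so this factorisation does not isolate a single simple pole in $D_\beta$. The paper instead shows that $L(z)$ and $pz^{p-1}L_1(z^p)$ are $E_p$-equivalent (Quatri\`eme pas) and then appeals to \cite[Cor.~6.4.2 and Prop.~6.4.6]{Gillesmoduldiff}, which state precisely that, for a system with a full basis in $D(t,1)$, a disk $D_\gamma$ is regular singular for $L_1(z)$ if and only if it is regular singular for $pz^{p-1}L_1(z^p)$. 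You should invoke (or reprove) that statement rather than argue via the naive factorisation.
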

 
En particulier, si $f(z)\in 1+z\mathbb{C}_p[[z]]$ vérifie la propriété $(\textbf{P})_{p,r,n}$ alors, pour tout entier $k$ positif, la série $\Lambda^k_p(f(z))$ vérifie la propriété $(\textbf{P})_{p,r,n}$. Le lemme~\ref{Deuxième pas} sera prouvé dans la partie~\ref{sec_dem_deuxieme_pas} et dans la partie suivante nous allons voir que, sous les hypothèses du théorème~\ref{practico},  pour presque tout $p\in\mathcal{S}$, l'opérateur $\mathcal{M}_f$ satisfait aux conditions (1)-(6) de la définition~\ref{propriedadP}.

%\begin{remarque}
%Sous les hypothèses du théorème~\ref{practico}, on sait que $f(z)$ annule un opérateur différentiel $\mathcal{H}$ muni d'une structure de Frobenius forte pour tout $p\in\mathcal{S}$ ainsi qu'un opérateur différentiel $\mathcal{D}$ qui est MOM en zéro. Donc, nous voulons montrer qu'il existe un opérateur annulé par $f(z)$ en vérifiant les conditions (1)-(6) de la définition~\ref{propriedadP}. Le point le plus délicat est de montrer la condition (3). Cependant, d'après la remarque~\ref{rem_motiv_m_p}, il suit que  l'opérateur $\mathcal{M}_f$  vérifie la condition (3). De plus, il découlera du lemme~\ref{Premier pas} que $\mathcal{M}_f$ satisfait aux conditions (1)-(6). 
%\end{remarque}
\subsubsection{Construction de l'ensemble $B_\mathcal{S}$}\label{construction}
Soit $\mathcal{S}$ un ensemble infini de nombres premiers et soient $f(z)\in\mathcal{F}(\mathcal{S})$,  $n$ l'ordre de $\mathcal{M}_f$ et $r$ le nombre de singularités à distance finie de $\mathcal{M}_f$ dans $\overline{\mathbb{Q}}$.  Dans cette partie nous allons construire un ensemble $B_\mathcal{S}\subset\mathcal{S}$ tel que: l'ensemble $\mathcal{S}\setminus B_\mathcal{S}$ est fini et pour tout $p\in B_\mathcal{S}$, la série $f(z)$ vérifie la propriété $(\textbf{P})_{p,r,n}$. 
Dans ce qui suit nous fixons l'ensemble $\mathcal{S}$, la série $f(z)\in\mathcal{F}(\mathcal{S})$ et les entiers $n$ et $r$. Soient $\gamma_1,\ldots,\gamma_r$ les singularités à distance finie de $\mathcal{M}_f$ dans $\overline{\mathbb{Q}}$ et écrivons $\mathcal{M}_f=\frac{d^n}{dz^n}+a_1(z)\frac{d^{n-1}}{dz^{n-1}}+\cdots+a_{n-1}(z)\frac{d}{dz}+a_n(z)\in\mathbb{Q}(z)[d/dz].$ Soit $d=\prod\limits_{i\neq j}(\gamma_i-\gamma_j)$. Alors $d$ est un nombre algébrique différent de zéro. Nous posons $B_{\mathcal{S}}$ comme l'ensemble des nombres premiers $p$ contenu dans $\mathcal{S}$ tel que la norme $p$-adique de $\gamma_i$ pour tout $\gamma_i\neq0$ est égale à 1, la norme de Gauss des $a_i(z)$ est inférieure ou égale à 1 et la norme $p$-adique de $d$ est égale à 1.

\begin{remarque}\label{rem_prop_b_s}
L'ensemble $\mathcal{S}\setminus B_{\mathcal{S}}$ est fini et par construction, pour tout $p\in B_{\mathcal{S}}$, $\mathcal{M}_f$ est à coefficients dans $E_{0,p}\cap\vartheta_{E_p}\cap\mathbb{Q}(z)$.  Donc, la remarque~\ref{rem_g_n} implique que, pour tout $p\in B_{\mathcal{S}}$, $\mathcal{M}_{f,\delta}$ est à coefficients dans $E_{0,p}\cap\vartheta_{E_p}\cap\mathbb{Q}(z)$.%Finalement,  la proposition~6.1.3 de \cite{Gillesmoduldiff} entraîne que les disques singuliers de $M_f$ sont singuliers réguliers.

 \end{remarque}

%\begin{lem}\label{MUM}
%	Soient $\mathcal{S}$ un ensemble infini de nombres premiers et $f(z)\in\mathcal{F}(\mathcal{S})$. Si $f(z)$ annule un opérateur différentiel MOM en zéro à coefficient dans $\mathbb{Q}(z)$ alors l'opérateur différentiel $\mathcal{M}_f$ est MOM en zéro.
%\end{lem}

%\begin{proof}
%Par définition, montrer que $\mathcal{M}_f$ est MOM en zéro revient à montrer que $\mathcal{M}_f$ est fuchsien, zéro est un point singulier régulier et que les exposants en zéro de $\mathcal{M}_f$ son tous égaux à zéro. Comme $f(z)\in\mathcal{F}(\mathcal{S})$ alors, on a déjà vu dans la remarque~\ref{rem_motiv_m_p} que, pour tout $p\in\mathcal{S}$, $\mathcal{M}_f$ a une base de solutions dans le disque générique ouvert de rayon 1
%\end{proof}

\begin{lem}\label{Premier pas}
Soit $\mathcal{S}$ un ensemble infini de nombres premiers et $f(z)\in\mathcal{F}(\mathcal{S})$. Si $f(z)$ annule $\mathcal{D}\in\mathbb{Q}(z)[d/dz]$ MOM en zéro alors, pour tout $p\in B_{\mathcal{S}}$, la série $f(z)$ vérifie la propriété $(\textbf{P})_{p,r,n}$. En outre, pour tout $p\in B_{\mathcal{S}}$, l'opérateur différentiel $\mathcal{M}_f$ satisfait aux conditions (1)-(6) de la définition~\ref{propriedadP}.
\end{lem}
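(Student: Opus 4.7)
Le plan est de prendre $\mathcal{L}=\mathcal{M}_f$ et de vérifier une à une les six conditions de la définition~\ref{propriedadP} pour chaque $p\in B_{\mathcal{S}}$. Les conditions (1) et (2) sont immédiates d'après la définition même de l'opérateur minimal $\mathcal{M}_f$ et la définition de $n$ comme l'ordre de $\mathcal{M}_f$. La condition (3), à savoir $M_{f,\delta}\in\mathscr{M}_p$, est exactement le contenu du lemme~\ref{lem_minimal}, appliqué aux premiers de $B_{\mathcal{S}}$ (cet ensemble étant inclus dans le sous-ensemble cofini de $\mathcal{S}$ considéré dans ce lemme, vu la remarque~\ref{rem_prop_b_s}).

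Pour la condition (6), je recourrais directement à la construction de $B_{\mathcal{S}}$: les singularités à distance finie $\gamma_1,\dots,\gamma_r$ de $\mathcal{M}_f$ dans $\overline{\mathbb{Q}}$ vérifient $|\gamma_i|_p\leq 1$ (soit $\gamma_i=0$, soit $|\gamma_i|_p=1$), et la condition $|d|_p=1$ entraîne $|\gamma_i-\gamma_j|_p=1$ pour $i\neq j$, si bien que les disques ouverts $D_{\gamma_1},\dots,D_{\gamma_r}$ sont deux à deux distincts. Par conséquent, la matrice compagnon $M_f(z)$ possède exactement $r$ disques singuliers à distance finie. Ces disques sont précisément ceux hérités des pôles des coefficients de $\mathcal{M}_f$, ceux-ci appartenant à $\vartheta_{E_p}$ par construction de $B_{\mathcal{S}}$.

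Pour la condition (5), je combinerais la condition (3) avec la remarque~\ref{rem_g_n}: puisque $G_n\in\mathrm{GL}_n(\mathbb{Z}[1/z])$ réalise une $E_p$-équivalence entre la matrice compagnon $M_f(z)$ de $\mathcal{M}_f$ et la matrice $\frac{1}{z}M_{f,\delta}(z)$, et puisque $M_{f,\delta}\in\mathscr{M}_p$ admet une base de solutions dans le disque générique $D(t,1)$, la matrice $M_f(z)$ admet elle aussi une base de solutions dans $D(t,1)$. D'après le critère $p$-adique standard (dans l'esprit de Christol), l'existence d'une base de solutions dans le disque générique force chaque disque singulier de $M_f(z)$ à être singulier régulier, y compris le disque $D_{\infty}$ que l'on traite par le changement de variable $z\mapsto 1/z$. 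Combiné à la remarque~\ref{trans}, ceci donne la condition (5).

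Il reste la condition (4), à savoir que $\mathcal{M}_f$ est $p$-unipotent. Que $\mathcal{M}_f$ soit à coefficients dans $\vartheta_{E_p}$ découle aussitôt de la définition de $B_{\mathcal{S}}$, qui impose que la norme de Gauss des coefficients $a_i(z)$ soit au plus $1$; la réduction $\mathcal{M}_{f,p}$ est non nulle car $\mathcal{M}_f$ est unitaire. Que $\mathcal{M}_{f,p}$ soit fuchsien résulte de la condition (5) combinée au lemme~\ref{singularites} pour les singularités à distance finie et à l'analogue à l'infini; enfin, le fait que $0$ soit un point singulier régulier de $\mathcal{M}_{f,p}$ avec tous ses exposants égaux à zéro se déduit de la factorisation indicielle $P_{\mathcal{D}}=P_{\mathcal{P}}P_{\mathcal{M}_f}$ et de la proposition~\ref{Fuchs}, exactement comme dans la démonstration du lemme~\ref{lem_minimal}. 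Le principal obstacle sera le traitement soigneux du disque à l'infini dans la condition (5) et la vérification que la réduction commute correctement avec le passage de $L(z)$ à $L_{\delta}(z)$, afin de conclure que la régularité héritée de $\mathscr{M}_p$ descende bien en la propriété fuchsienne de $\mathcal{M}_{f,p}$ au sens de la proposition~\ref{Fuchs}.
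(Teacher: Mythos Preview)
Ton plan général est le bon (prendre $\mathcal{L}=\mathcal{M}_f$ et vérifier (1)--(6)), et les conditions (1), (2), (3), (6) sont traitées essentiellement comme dans l'article. Le problème est dans ton traitement des conditions (4) et (5), où tu inverses l'ordre logique de l'argument et invoques des implications qui ne sont pas établies.

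Pour (5), tu affirmes qu'une base de solutions dans le disque générique $D(t,1)$ \og force chaque disque singulier de $M_f(z)$ à être singulier régulier\fg{} par un \og critère $p$-adique standard\fg. Ce n'est pas un énoncé directement disponible. L'article procède autrement, et de façon plus globale: de la base de solutions génériques on tire la \emph{nilpotence} de $\mathcal{M}_{f,p}$ (proposition~5.1 de \cite[Chap.~III]{Dworkgfunciones}); puis, le fait que ceci vaut pour \emph{tous} les $p\in B_{\mathcal{S}}$ (ensemble infini) permet d'invoquer le théorème de Katz (théorème~6.1 de \cite[Chap.~III]{Dworkgfunciones}) pour conclure que $\mathcal{M}_f$ est fuchsien \emph{sur $\mathbb{Q}$}; enfin la proposition~6.1.3 de \cite{Gillesmoduldiff} donne la régularité des disques singuliers. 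Ton argument, premier par premier, ne capture pas ce passage global par la caractéristique zéro.

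Pour (4), ton recours au lemme~\ref{singularites} est erroné: ce lemme borne seulement le \emph{nombre} de singularités de $\mathcal{L}_p$, il ne dit rien sur leur régularité. Tu ne peux donc pas en déduire que $\mathcal{M}_{f,p}$ est fuchsien. L'article obtient la fuchsianité de $\mathcal{M}_{f,p}$ directement par la chaîne nilpotence $\Rightarrow$ fuchsien via le théorème de Katz--Honda (théorème~2.3 de \cite[Chap.~III]{Dworkgfunciones}), sans passer par (5). Autrement dit, c'est (4) qui se démontre en premier via la nilpotence, puis (5) s'obtient en accumulant l'information de (4) sur tous les premiers.
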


\begin{proof} On va montrer que pour tout $p\in B_{\mathcal{S}}$, l'opérateur $\mathcal{M}_f$ satisfait aux conditions (1)-(6) de la définition~\ref{propriedadP}. Soit $p\in B_{\mathcal{S}}$. Il est clair que $\mathcal{M}_f$ est annulé par $f$ et que l'ordre de $\mathcal{M}_f$ est $n$. Soit $\mathcal{M}_{f,\delta}$ l'opérateur différentiel obtenu après avoir réécrit $z^n\mathcal{M}_f$ en fonction de $\delta$ et soit $M_{f,\delta}$ la matrice compagnon de $\mathcal{M}_{f,\delta}$. Il suit du lemme~\ref{lem_minimal} que, pour tout $p\in B_\mathcal{S}$,  $M_{f,\delta}$ appartient à $\mathscr{M}_p$. Maintenant nous montrons que pour $p\in B_\mathcal{S}$, $\mathcal{M}_f$ est $p$-unipotent. En effet, comme $M_{f,\delta}$ appartient à $\mathscr{M}_p$ alors le système $\delta X=M_{f,\delta}X$ a une base de solutions dans le disque générique de rayon 1, où $M_f$ est la matrice compagnon de $\mathcal{M}_f$. Donc, il suit de la remarque~\ref{rem_g_n} que le système $d/dz X=M_{f}X$ a aussi une base de solutions dans le disque générique de rayon 1. Ainsi, d'après la proposition~5.1 de \cite[Chap~III]{Dworkgfunciones}, $\mathcal{M}_{f,p}$ est nilpotent \footnote{Un opérateur différentiel $\mathcal{D}_p$ à coefficients dans $k(z)$, $k$ un cors fini de caractéristique $p$, est nilpotent si la $p$-courbure de $\mathcal{D}_p$ est nilpotente.}. Donc, d'après le théorème de Katz--Honda (voir théorème 2.3 de \cite[Chap~III]{Dworkgfunciones}) l'opérateur $\mathcal{M}_{f,p}$ est fuchsien. Ainsi, $\mathcal{M}_f$ est $p$-unipotent. Montrons que $M_f$ a comme uniques disques singuliers à distance finie les disques $D_{\gamma_1},\ldots D_{\gamma_r}$. Soit $D_{\gamma}$ un disque singulier à distance finie de $M_f$. Donc, le disque $D_{\gamma}$ contient une singularité de $\mathcal{M}_f$. Disons que $\gamma_i\in D_{\gamma}$ pour un certain $i\in\{1,\ldots,r\}$. Ainsi, $D_{\gamma}\cap D_{\gamma_i}\neq\emptyset$. Par conséquent, $D_{\gamma}=D_{\gamma_i}$. Maintenant, il suffit de montrer que $D_{\gamma_i}\cap D_{\gamma_j}=\emptyset$ si $i\neq j$. En effet, comme $p\in B _{\mathcal{S}}$ alors pour tout $\gamma_i\neq0$,  $|\gamma_i|_{p}=1$ et ainsi, pour tout $i,j\in\{1,\ldots,r\}$, $|\gamma_i-\gamma_j|_p\leq1$. Maintenant, comme $|d|_p=1$ alors $\prod\limits_{i\neq j}|\gamma_i-\gamma_j|_p=1$, d'où  $|\gamma_i-\gamma_j|_p=1$ pour $i\neq j$. Ainsi, $D_{\gamma_i}\cap D_{\gamma_j}=\emptyset$. Finalement, l'opérateur $\mathcal{M}_f$ est fuchsien car pour tout $p\in B_{\mathcal{S}}$, $\mathcal{M}_{f,p}$ est nilpotent donc, d'après le théorème 6.1 de \cite[Chap~III]{Dworkgfunciones}, l'opérateur $\mathcal{M}_{f}$ est fuchsien. Ainsi, la proposition~6.1.3 de \cite{Gillesmoduldiff} entraîne que les disques singuliers de $M_f$ sont singuliers réguliers. Raison pour laquelle, pour tout $p\in B_{\mathcal{S}}$ la série $f(z)$ vérifie la propriété $(\textbf{P})_{p,r,n}$ car l'opérateur $\mathcal{M}_f$ satisfait aux conditions (1)-(6) de la définition~\ref{propriedadP}.  
\end{proof}

En admettant le lemme~\ref{Deuxième pas}  nous sommes en mesure de démontrer la proposition~\ref{recurrence}.  

\subsubsection{Démonstration de la proposition \ref{recurrence}}\label{dem}

\begin{proof} 
Soit $\mathcal{S'}=B_{\mathcal{S}}$. Alors $\mathcal{S}'\subset\mathcal{S}$ est infini et $\mathcal{S}\setminus \mathcal{S}'$ est fini. D'après le lemme~\ref{Premier pas}, pour tout $p\in\mathcal{S}'$ la série $f(z)$ vérifie la propriété $(\textbf{P})_{p,r,n}$.	Ainsi, d'après le lemme~\ref{Deuxième pas}, pour tout $p\in\mathcal{S}'$ et tout entier $k$ positif la série $\Lambda^k_p(f(z))$ vérifie la propriété $(\textbf{P})_{p,r,n}$. En particulier $\Lambda^k_p(f(z))$ annule un opérateur $p$-unipotent $\mathcal{L}_k\in\vartheta_{E_p}[d/dz]$ qui a exactement $r$ disques singuliers réguliers à distance finie. Donc, d'après le lemme~\ref{singularites}, le nombre de singularités à distance finie de $\mathcal{L}_{k,p}$ dans $\overline{\mathbb{F}_p}$ est inférieur ou égal à $r$.
\end{proof}
\begin{remarque}\label{S'}
	Soient $\mathcal{S}$ un ensemble infini de nombres premiers et $f(z)\in\mathcal{F}(\mathcal{S})$. Supposons que $f(z)$ annule un opérateur différentiel $\mathcal{D}\in\mathbb{Q}(z)[d/dz]$ MOM en zéro. D'après la première partie du théorème~\ref{practico}, il existe $\mathcal{S}'\subset\mathcal{S}$ infini tel que $f(z)\in\mathcal{L}^2(\mathcal{S}')$ et $\mathcal{S}\setminus\mathcal{S}'$ est fini. Il suit des parties~\ref{construction} et \ref{dem} que nous pouvons prendre $\mathcal{S}'= B_{\mathcal{S}}$.
\end{remarque}

\subsubsection{Démonstration du lemme~\ref{Deuxième pas}}\label{sec_dem_deuxieme_pas}

\smallskip

Pour la démonstration du lemme~\ref{Deuxième pas} nous avons besoin d'un résultat préliminaire. Avant d'énoncer ce résultat nous avons la remarque suivante.
\begin{remarque}\label{sol}
	Soit $G(z)\in\mathscr{M}_p$ de taille $n$. Alors le système $\delta X=G(z)X$ à une base de solutions dans l'anneau $\mathbb{C}_p[[z,Logz]]$, où $\delta Logz=1$. En effet, comme les valeurs propres de la matrice $G(0)$ sont toutes égales à zéro et $G(z)$ est à coefficients dans $\mathbb{C}_p[[z]]$ alors, d'après la proposition~8.5 de \cite[Chap~III]{Dworkgfunciones}, une matrice fondamentale de solutions du système $\delta X=G(z)X$ est donné par $Y_GX^{G(0)}$, où $Y_G\in Gl_n(\mathbb{C}_p[[z]])$, $Y_G(0)$ est la matrice identité et $X^{G(0)}=\sum_{j\geq0}G(0)^j\frac{(Logz)^j}{j!}$. Comme $\delta Logz=1$ alors $\delta X^{G(0)}=G(0)X^{G(0)}$. Ainsi, $\delta(Y_G)+G(z)G(0)=G(z)Y_G(z)$ car $\delta(Y_GX^{G(0)})=G(z)Y_GX^{G(0)}$. Comme toutes les valeurs propres de $G(0)$ sont égales à zéro alors $G(0)^n=0$ et ainsi, $X^{G(0)}=\sum_{j=0}^{n-1}G(0)^j\frac{(Logz)^j}{j!}$. La matrice $Y_G$ est appelée \emph{la part uniforme} de la matrice $G$. 
\end{remarque}
 
 Nous rappelons qu'une matrice $G$ à coefficients dans $E_p$ de taille $n$ a une structure de Frobenius faible s'il existe un entier $h>0$ et deux matrices $H$ et $F$ de taille $n$ à coefficient dans $E_p$ telles que $H$ est inversible et $$\delta H(z)=G(z)H(z)-p^hH(z)F(z^{p^h}).$$ Le lemme suivant nous assure l'existence de la structure de Frobenius faible pour toute matrice dans $\mathscr{M}_p$.

\begin{lem}\label{10}
	 Si $G(z)$ appartient à $\mathscr{M}_p$ alors il existe $F(z)$ dans $\mathscr{M}_p$ telle que les matrices $G$ et $pF(z^p)$ sont équivalentes. Plus précisément on a:\begin{enumerate}
	 	\item  $F(z)=[\delta(\Lambda_p(Y_G))+\frac{1}{p}\Lambda_p(Y_G)G(0)](\Lambda_p(Y_G))^{-1}$, où $Y_G$ est la part uniforme de la matrice $G$ et $\Lambda_p(Y_G)$ est la matrice obtenue après avoir appliqué $\Lambda_p$ à chaque entrée de $Y_G$.
	 	\item La matrice $H=\Lambda_p(Y_G)(z^p)Y_G^{-1}$ appartient à $Gl_n(E_{0,p})$ et on a aussi $\delta H=pF(z^p)H-HG$.
	 \end{enumerate}
\end{lem}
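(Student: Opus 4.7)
Le plan consiste à prendre les formules explicites pour $F$ et $H$ données par l'énoncé, puis à vérifier successivement que $H\in{\rm GL}_n(E_{0,p})$, que $F\in\mathscr{M}_p$ et enfin l'identité $\delta H=pF(z^p)H-HG$. Posons $Y:=Y_G$ et $Z:=\Lambda_p(Y)$. L'étape initiale, que j'identifie comme la plus délicate, est de nature analytique : il s'agit d'établir que $Y\in{\rm GL}_n(E_{0,p})$. D'après la remarque~\ref{sol}, on sait déjà que $Y\in{\rm GL}_n(\mathbb{C}_p[[z]])$, $Y(0)=I$ et $\delta Y=GY-YG(0)$. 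L'hypothèse $G\in\mathscr{M}_p$ fournit une base de solutions du système $\delta X=GX$ dans le disque générique $D(t,1)$ et assure la nullité des valeurs propres de $G(0)$ ; un théorème classique de transfert des rayons de convergence $p$-adiques (à la Dwork--Christol) permet alors de conclure que les entrées de $Y$ et de $Y^{-1}$ sont analytiques bornées sur $D(0,1)$, c'est-à-dire $Y\in{\rm GL}_n(E_{0,p})$. Comme $\Lambda_p$ préserve $E_{0,p}$ et $Z(0)=Y(0)=I$, on en déduit que $Z\in{\rm GL}_n(E_{0,p})$ ; la substitution $z\mapsto z^p$ stabilisant cet anneau, on obtient $Z(z^p)\in{\rm GL}_n(E_{0,p})$, d'où $H=Z(z^p)Y^{-1}\in{\rm GL}_n(E_{0,p})$.

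Une fois $Z^{-1}$ réalisé dans ${\rm GL}_n(E_{0,p})$, la matrice $F:=[\delta Z+\tfrac{1}{p}ZG(0)]Z^{-1}$ appartient à $M_n(E_{0,p})$ par construction, et son évaluation en zéro donne $F(0)=\tfrac{1}{p}G(0)$, dont toutes les valeurs propres sont nulles puisque celles de $G(0)$ le sont. L'identité $FZ=\delta Z+ZF(0)$ exprime que $Z$ est la part uniforme du système $\delta X=FX$ ; la matrice fondamentale associée est alors $ZX^{F(0)}$, analytique dans $D(t,1)$ puisque $Z$ converge sur $D(0,1)$. Ceci établit $F\in\mathscr{M}_p$.

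Il reste à vérifier l'identité $\delta H=pF(z^p)H-HG$ par un calcul direct. En dérivant $H=Z(z^p)Y^{-1}$ et en utilisant la règle de chaîne $\delta_z(Z(z^p))=p(\delta Z)(z^p)$ ainsi que la relation $\delta(Y^{-1})=-Y^{-1}G+G(0)Y^{-1}$ (conséquence de $\delta Y=GY-YG(0)$), on obtient
\[
\delta H=p(\delta Z)(z^p)Y^{-1}+Z(z^p)G(0)Y^{-1}-Z(z^p)Y^{-1}G.
\]
Par ailleurs, l'évaluation en $z^p$ de $FZ=\delta Z+\tfrac{1}{p}ZG(0)$ donne $pF(z^p)Z(z^p)=p(\delta Z)(z^p)+Z(z^p)G(0)$, d'où $pF(z^p)H=p(\delta Z)(z^p)Y^{-1}+Z(z^p)G(0)Y^{-1}$, et la comparaison des deux expressions conclut. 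L'obstacle principal demeure réellement l'étape analytique initiale : traduire l'existence d'une base de solutions \emph{générique} en convergence effective à l'origine pour la part uniforme $Y$ et son inverse.
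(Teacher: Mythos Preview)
Your algebraic verification of the identity $\delta H=pF(z^p)H-HG$ at the end is correct and essentially parallels the paper's computation. The gap lies earlier, at precisely the step you flagged as delicate.

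You claim that a Dwork--Christol transfer theorem yields $Y:=Y_G\in{\rm GL}_n(E_{0,p})$. What such a theorem actually gives is that the entries of $Y$ and $Y^{-1}$ are power series of radius of convergence $\geq 1$; it does \emph{not} say they are analytic elements. The ring $E_{0,p}$ consists of uniform limits of rational functions without poles in $D(0,1)$, which is strictly smaller than the ring of bounded analytic functions on $D(0,1)$. In the concrete situations of the paper the first column of $Y_G$ is $(f,\delta f,\ldots,\delta^{n-1}f)^t$ with $f$ the $G$-function under study; if $f$ were in $E_{0,p}$ its reduction modulo $\mathfrak{m}_p$ would be a rational function in $\overline{\mathbb{F}_p}(z)$, which is generically false. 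Once $Y\in{\rm GL}_n(E_{0,p})$ fails, your deductions that $Z$, $Z^{-1}$, $H$ and $F$ lie in $E_{0,p}$ all collapse, and with them conditions~i) and~iii) for $F$.

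The paper avoids this by never asserting that the individual factors $Y$ or $\Lambda_p(Y)(z^p)$ are analytic elements. Instead it writes $H$ in an entirely different form,
\[
H(z)=\frac{1}{p}\sum_{\xi^p=1}\sum_{j\geq 0}G_j(z)\,\frac{(\xi-1)^j}{j!},
\]
where the matrices $G_j$ are defined inductively from $G$ alone (hence lie in $M_n(E_{0,p})$), and invokes Christol's estimates in \cite[p.~164]{christolpadique} to show that this averaged object lies in ${\rm GL}_n(E_{0,p})$. Only afterwards is this $H$ identified with $\Lambda_p(Y_G)(z^p)Y_G^{-1}$, using nilpotence of $G(0)$. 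Likewise, the membership $F\in M_n(E_{0,p})$ and condition~iii) for $F$ are obtained by citing \cite[p.~165--166]{christolpadique}, not from the formula $F=[\delta Z+\tfrac{1}{p}ZG(0)]Z^{-1}$ read na\"ively. The point is that the \emph{product} $H$ is an analytic element even though neither factor is, and establishing this requires the explicit ``Frobenius-averaging'' representation.
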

 La démonstration que nous présentons de ce lemme suit les arguments donnés par Christol dans \cite[lemme~5.1]{christolpadique}. 
\begin{proof}
	Soit $G(z)$ dans $\mathscr{M}_p$ de taille $n$. D'après la remarque~\ref{sol}, une matrice fondamentale de solutions du système $\delta X=G(z)X$ est donnée par $Y_{G}X^{G(0)}$, où $Y_G\in Gl_n(\mathbb{C}_p[[z]])$ et $Y_G(0)=I_n$, où $I_n$ est la matrice identité de taille $n$. Maintenant considérons la suite de matrices $G_j$ définie comme suit: $G_0(z)=Id$ et $G_{j+1}(z)=\delta G_j(z)+G_j(z)(G(z)-jI_n)$. On pose $H(z)=\frac{1}{p}\sum_{\xi^p=1}\sum_{j\geq0}G_j(z)\frac{(\xi-1)^j}{j!}$. Comme $G(z)$ vérifie la condition iii) de la définition~\ref{R} alors, il est montré dans \cite[p.~164]{christolpadique} que la matrice $H$ appartient à $Gl_n(E_{0,p})$. Écrivons $Y_G=\sum_{j\geq0}Y_jz^j$, où pour tout $j\geq0$, $Y_j$ est une matrice à coefficients dans $\mathbb{C}_p$. Comme la matrice $G(0)$ est nilpotente car $G(0)^n=0$ (voir remarque~\ref{sol}) alors, il est montré dans \cite[p.~165]{christolpadique} que, $HY_G=\sum_{j\geq0}Y_{jp}z^{jp}$. Ainsi, $HY_G=\Lambda_p(Y_G)(z^p)$, où $\Lambda_p(Y_G)$ désigne la matrice obtenue après avoir appliqué $\Lambda_p$ à chaque entrée de $Y_G$. Par conséquent, $H(0)=I_n$. Maintenant on pose
	\begin{equation}\label{construcionF}
	F(z)=[\delta(\Lambda_p(Y_G))+\frac{1}{p}\Lambda_p(Y_G)G(0)](\Lambda_p(Y_G))^{-1}.
	\end{equation} 
	 Ainsi, $pF(z^p)=[p(\delta(\Lambda_p(Y_G)))(z^p)+HY_GG(0)][Y^{-1}_GH^{-1}]$. Notons que $$(\delta H)Y_G+H(\delta Y_G)=\delta(HY_G)=\delta(\Lambda_p(Y_G)(z^p))=p(\delta(\Lambda_p(Y_G)))(z^p).$$ D'après la remarque~\ref{sol}, $\delta Y_G=GY_G-Y_GG(0)$ alors, \[p(\delta(\Lambda_p(Y_G)))(z^p)=(\delta H)Y_G+H[GY_G-Y_GG(0)].\] Donc, 
	\begin{align*}
	pF(z^p)=&[(\delta H)Y_G+H[GY_G-Y_GG(0)]+HY_GG(0)][Y^{-1}_GH^{-1}]\\
	=&[(\delta H)Y_G+HGY_G][Y^{-1}_GH^{-1}]\\
	=&(\delta H)H^{-1}+HGH^{-1}.
	\end{align*}
	Par conséquent, $\delta H=pF(z^p)H-HG$. Il est montré dans \cite[p.165-166]{christolpadique} que la matrice $F(z)$ vérifie les conditions i) et iii) de la définition~\ref{R}. En particulier on peut évaluer la matrice $F$ en zéro. Comme $H(0)=I_n$ alors l'égalité $\delta H=pF(z^p)H-HG$ entraîne que, $pF(0)=G(0)$. Ainsi, toutes les valeurs propres de $F(0)$ son égales à zéro. Donc, la matrice $F$ vérifie la condition ii) de la définition~\ref{R}. Ainsi, la matrice $F(z)$ appartient à $\mathscr{M}_p$ et les matrices $pF(z^p)$ et $G(z)$ sont équivalentes. Finalement, comme $HY_G=\Lambda_p(Y_G)(z^p)$ alors $H=\Lambda_p(Y_G)(z^p)Y_G^{-1}.$
\end{proof}

\begin{proof}[Démonstration du lemme~\ref{Deuxième pas}]Nous allons faire la démonstration en cinq pas.

	\textbf{Premier pas.} \emph{Il existe un opérateur différentiel $\mathcal{L}_{1,\delta}\in E_{0,p}[\delta]$ d'ordre $n$ qui est annulé par $\Lambda_p(f(z))$}.	
		
	  Écrivons $\mathcal{L}_{\delta}:=\delta^n+ e_1(z)\delta^{n-1}+\cdots+e_{n-1}(z)\delta+e_{n}(z)$, où les $e_i(z)\in\mathbb{C}_p[[z]]$ pour $i\in\{1,\ldots,n\}$ car les $e_i(z)\in E_{0,p}$. Par définition,  \[
	 L_{\delta}(z)=\begin{pmatrix}
	 0 & 1 & 0 & \dots & 0 & 0\\
	 0 & 0 & 1 & \dots & 0 & 0\\
	 \vdots & \vdots & \vdots & \vdots & \vdots & \vdots\\
	 0 & 0 & 0 & \ldots & 0 & 1\\
	 -e_n(z) & -e_{n-1}(z) & -e_{n-3}(z) & \ldots & -e_2(z) & -e_1(z)\\
	 \end{pmatrix}.
	 \] 
	 Les valeurs propres de $L_{\delta}(0)$ sont les racines du polynôme  $X^n+e_1(0)X^{n-1}+\cdots+e_{n-1}(0)X+e_n(0)$ mais, par hypothèse, les valeurs propres de $L_{\delta}(0)$ sont toutes égales à zéro, donc $e_i(0)=0$. D'où \[
	L_{\delta}(0)=\begin{pmatrix}
	0 & 1 & 0 & \dots & 0 & 0\\
	0 & 0 & 1 & \dots & 0 & 0\\
	\vdots & \vdots & \vdots & \vdots & \vdots & \vdots\\
	0 & 0 & 0 & \ldots & 0 & 1\\
	0 & 0 & 0 & \ldots & 0 & 0\\
	\end{pmatrix}.
	\] 
	Soit $\boldsymbol{\delta}f=(f(z),\delta f(z),\ldots,\delta^{(n-1)}f(z))^t$. Comme par hypothèse $\mathcal{L}_{\delta}(f)=0$ alors $\delta(\boldsymbol{\delta}f))=L_{\delta}(z)\boldsymbol{\delta}f.$ D'après la remarque~\ref{sol}, une matrice fondamentale de solutions du système $\delta X=L_{\delta}(z)X$ est donnée par 
	\[
	Y_LX^{L_{\delta}(0)}=Y_{L}\cdot\begin{pmatrix}
	1 & Logz & \frac{(Logz)^2}{2!} & \dots & \frac{(Logz)^{n-1}}{(n-1)!} \\
	0 & 1 & Logz & \dots & \frac{(Logz)^{n-2}}{(n-2)!} \\
	\vdots & \vdots & \vdots & \vdots & \vdots \\
	0 & 0 & 0 & \ldots &  Logz\\
	0 & 0 & 0 & \ldots &  1\\
	\end{pmatrix},
	\]
	où $Y_L\in Gl_n(\mathbb{C}_p[[z]])$ est telle que $Y_L(0)$ est la matrice identité. Alors, le système différentiel $\delta X=L_{\delta}(z)X$ a une unique solution dans $\mathbb{C}_p[[z]]^n$ à constante près\footnote{Ici nous avons utilisé le fait que $Logz$ est transcendant sur $\mathbb{C}_p((z)).$}. Puisque $\boldsymbol{\delta}f\in\mathbb{C}_p[[z]]^n$ est solution du système $\delta X=L_{\delta}(z)X$ et $f(0)=1$ on obtient que la première colonne de la matrice $Y_{L}$ est le vecteur $\boldsymbol{\delta}f$. Comme $L_{\delta}(z)$ est dans $\mathscr{M}_p$, d'après le lemme~\ref{10}, il existe $F(z)$ dans $\mathscr{M}_p$ telle que les matrices $L_{\delta}(z)$ et $pF(z^p)$ sont équivalentes. De plus, il suit encore du lemme~\ref{10} que
	\begin{equation}\label{F}
	F(z)=[\delta(\Lambda_p(Y_L))+\frac{1}{p}\Lambda_p(Y_L)L_{\delta}(0)](\Lambda_p(Y_L))^{-1},
	\end{equation}
	où $\Lambda_p(Y_L)$ désigne la matrice obtenue après avoir appliqué $\Lambda_p$ à chaque entrée de $Y_L$. Soit $I_n$ la matrice identité de taille $n$. Notons que $\Lambda_p(Y_L)(0)=I_n$ car $Y_L(0)=I_n$ donc, d'après \eqref{F}, on a 
	\begin{equation}\label{F0}
	F(0)=\frac{1}{p}L_{\delta}(0)=\begin{pmatrix}
	0 & \frac{1}{p} & 0 & \dots & 0 & 0\\
	0 & 0 & \frac{1}{p} & \dots & 0 & 0\\
	\vdots & \vdots & \vdots & \vdots & \vdots & \vdots\\
	0 & 0 & 0 & \ldots & 0 & \frac{1}{p}\\
	0 & 0 & 0 & \ldots & 0 & 0\\
	\end{pmatrix}.
	\end{equation}
	Considérons le produit
	\begin{equation}\label{matricefondamentalF}
	\Lambda_p(Y_L)X^{F(0)}=\Lambda_p(Y_{L})\cdot\begin{pmatrix}
	1 & \frac{Logz}{p} & \frac{1}{p^2}\frac{(Logz)^2}{2!} & \dots & \frac{1}{p^{n-1}}\frac{(Logz)^{n-1}}{(n-1)!} \\
	0 & 1 & \frac{1}{p}Logz & \dots & \frac{1}{p^{n-2}}\frac{(Logz)^{n-2}}{(n-2)!} \\
	\vdots & \vdots & \vdots & \vdots & \vdots \\
	0 & 0 & 0 & \ldots &  \frac{1}{p}Logz\\
	0 & 0 & 0 & \ldots &  1\\
	\end{pmatrix}.
	\end{equation}
	 Comme $\delta X^{F(0)}=F(0)X^{F(0)}$ alors, il suit de  \eqref{F} et \eqref{F0} que $$\delta(\Lambda_p(Y_L)X^{F(0)})=F(z)\Lambda_p(Y_L)X^{F(0)}.$$ Par conséquent, la matrice $\Lambda_p(Y_L)X^{F(0)}$ est une matrice fondamentale de solutions du système différentiel $\delta X=F(z)X$. Comme la première colonne de $Y_L$ est donnée par le vecteur $$\boldsymbol{\delta}f=(f(z),\delta f(z),\ldots,\delta^{(n-1)}f(z))^t$$ alors la première colonne de $\Lambda_p(Y_L)$ est le vecteur $$\Lambda_p(\boldsymbol{\delta}f)=(\Lambda_p(f(z)),\Lambda_p(\delta f(z)),\cdots,\Lambda_p(\delta^{(n-1)}f(z)))^t.$$ Ainsi, $\Lambda_p(\boldsymbol{\delta}f)$ est solution du système différentiel $\delta\vec{y}=F(z)\vec{y}$. Écrivons $F(z)=(a_{i,j}(z))_{1\leq i,j\leq n}$, où les $a_{i,j}(z)$ appartiennent à $E_{0,p}$ car $F(z)$ est dans $\mathscr{M}_p$. Alors,
	 \begin{small}

	 \[\begin{pmatrix}
	 a_{1,1}(z) & a_{1,2}(z) & \cdots & a_{1,n}(z)\\
	 a_{2,1}(z) & a_{2,2}(z) & \cdots & a_{2,n}(z)\\
	 \vdots & \vdots & \vdots & \vdots  \\
	 a_{n,1}(z) & a_{n,2}(z) & \cdots & a_{n,n}(z)\\
	 \end{pmatrix}\begin{pmatrix}
	 \Lambda_p(f(z))\\
	 \Lambda_p(\delta(f(z)))\\
	 \vdots\\
	 \Lambda_p(\delta^{n-1}(f(z)))
	 \end{pmatrix}=\begin{pmatrix}
	 \delta(\Lambda_p(f(z)))\\
	 \delta(\Lambda_p(\delta(f(z))))\\
	 \vdots\\
	 \delta(\Lambda_p(\delta^{n-1}(f(z))))
	 \end{pmatrix}.\]
	\end{small}
	 Par conséquent,
	 	\begin{multline}
	 	a_{n,1}(z)\Lambda_p(f(z))+a_{n,2}(z)\Lambda_p(\delta f(z))+\cdots+\\+a_{n,n-1}(z)\Lambda_p(\delta^{n-2}f(z))+a_{n,n}(z)\Lambda_p(\delta^{n-1}f(z))=\delta(\Lambda_p(\delta^{n-1}f(z))).
	 	\end{multline}
	  Mais $\Lambda_p(\delta^jf(z))=p^j\delta^j\Lambda_p(f(z))$, alors $\Lambda_p(f(z))$ est solution de l'opérateur différentiel $$\mathcal{L}_{1,\delta}=\delta^n-a_{n,n}(z)\delta^{n-1}(z)-\frac{a_{n,n-1}(z)}{p}\delta^{n-2}-\cdots-\frac{a_{n,2}(z)}{p^{n-2}}\delta-\frac{a_{n,1}(z)}{p^{n-1}}.$$
	 Notons que $\mathcal{L}_{1,\delta}$ est à coefficients dans $E_{0,p}$ car $a_{n,n}(z),\ldots, a_{n,1}(z)\in E_{0,p}$.
	 
	 \smallskip
	
	\textbf{Deuxième pas.} \emph{ Soit $L_{1,\delta}(z)$ la matrice compagnon de $\mathcal{L}_{1,\delta}$. Alors $L_{1,\delta}(z)$ est dans $\mathscr{M}_p$}. Par définition la matrice compagnon de $\mathcal{L}_{1,\delta}$ est \[
	L_{1,\delta}(z)=\begin{pmatrix}
	0 & 1 & 0 & \dots & 0 & 0\\
	0 & 0 & 1 & \dots & 0 & 0\\
	\vdots & \vdots & \vdots & \vdots & \vdots & \vdots\\
	0 & 0 & 0 & \ldots & 0 & 1\\
	\frac{a_{n,1}(z)}{p^{n-1}} & \frac{a_{n,2}(z)}{p^{n-2}} & \frac{a_{n,3}(z)}{p^{n-3}} & \ldots & \frac{a_{n,n-1}(z)}{p} & a_{n,n}(z)\\
	\end{pmatrix}
	\] Comme $a_{n,1}(z),\ldots, a_{n,n}(z)$ sont dans $E_{0,p}$ alors $L_{1,\delta}(z)$ est une matrice à coefficients dans $E_{0,p}$.  D'après \eqref{F0}, $a_{n,1}(0)=\cdots=a_{n,n}(0)=0$ donc, toutes les valeurs propres de $L_{1,\delta}(0)$ sont égales à zéro et ainsi la matrice $L_{1,\delta}(z)$ satisfait la condition ii) de la définition~\ref{R}. Il nous reste à voir que $L_{1,\delta}(z)$ vérifie la condition iii) de la définition~\ref{R}. À la suite de la remarque~\ref{sol} on a qu'une matrice fondamentale de solutions du système $\delta X=L_{1,\delta}(z)X$ est donnée par \[
	Y_{L_1}X^{L_{1,\delta}(0)}=Y_{L_1}\cdot\begin{pmatrix}
	1 & Logz &\frac{(Logz)^2}{2!} & \dots & \frac{(Logz)^{n-1}}{(n-1)!} \\
	0 & 1 & Logz & \dots &\frac{(Logz)^{n-2}}{(n-2)!} \\
	\vdots & \vdots & \vdots & \vdots & \vdots \\
	0 & 0 & 0 & \ldots & Logz\\
	0 & 0 & 0 & \ldots &  1\\
	\end{pmatrix},
	\] où $Y_{L_1}\in Gl_n(\mathbb{C}_p[[z]])$ est telle que $Y_{L_1}(0)$ est la matrice identité. Écrivons $Y_{L_1}=(g_{i,j}(z))_{1\leq i,j\leq n}$. Comme la matrice $L_{1,\delta}(z)$ est à coefficients dans $E_{0,p}$ et toutes le valeurs propres de $L_{1,\delta}(0)$ sont égales à zéro alors, d'après le théorème~2 de \cite{christolpadique}, pour montrer que $L_{1,\delta}(z)$ vérifie la condition iii) de la définition~\ref{R}, il suffit de voir que le rayon de convergence des séries $g_{i,j}(z)$ est supérieur ou égal à 1. Écrivons $Y_{L}=(f_{i,j}(z))_{1\leq i,j\leq n}$. Par hypothèse $L_{\delta}(z)$ est dans $\mathscr{M}_p$ alors, d'après le théorème~2 de \cite{christolpadique}, le rayon de convergence des séries $f_{i,j}(z)$ est supérieur ou égal à 1 pour $i,j\in\{1,\ldots,n\}$. Comme la norme est ultramétrique alors le rayon de convergence des séries $\Lambda_p(f_{i,j}(z))$ est supérieur ou égal à 1. Alors, pour montrer que le rayon de convergence des séries $g_{i,j}(z)$ est supérieur ou égal à 1 il suffit de montrer que $$Y_{L_1}=diag(1,1/p,\ldots, 1/p^{n-1})\Lambda_p(Y_{L})diag(1,p,\ldots, p^{n-1}).$$
Comme $diag(1,1/p,\ldots, 1/p^{n-1})\Lambda_p(Y_{L})diag(1,p,\ldots, p^{n-1})(0)$ est la matrice l'identité, cela revient à montrer que la matrice $$T=diag(1,1/p,\ldots, 1/p^{n-1})\Lambda_p(Y_{L})diag(1,p,\ldots, p^{n-1})X^{L_{1,\delta}(0)}$$  est une matrice fondamentale de solutions du système $\delta X=L_{1,\delta}(z)X$.  D'abord, on montre que, pour tout $i\in\{1,\ldots,n-1\}$ et tout $k\in\{1,\ldots,n\}$, $f_{i+1,k}=f_{i,k-1}+\delta f_{j,k}$.
	En effet, pour chaque $i\in\{1,\ldots,n\}$, considérons $$F_{i,j}=\sum_{k=1}^jf_{i,k}\frac{(Logz)^{j-k}}{(j-k)!}.$$
	Remarquons que $Y_{L}X^{L_{\delta}(0)}=(F_{i,j})_{1\leq i,j\leq n}$. Comme $Y_{L}X^{L_{\delta}(0)}$ est une matrice fondamentale de solutions de $\delta X=L_{\delta}(z)X$ et $L_{\delta}(z)$ est la matrice compagnon de $\mathcal{L}_{\delta}$ alors, pour tout $i\in\{2,\ldots, n\}$ et  $j\in\{1,\ldots, n\}$, $F_{i,j}=\delta F_{i-1,j}$. Mais, il est clair que $$\delta F_{i,j}
=\delta(f_{i,1})\frac{(Logz)^{j-k}}{(j-k)!}+\sum_{k=2}^j(f_{i,k-1}+\delta(f_{i,k}))\frac{(Logz)^{j-k}}{(j-k)!}.$$  Puisque $F_{i+1,j}=\delta F_{i,j}$ et $Logz$ est transcendant sur $\mathbb{C}_p[[z]]$, l'égalités précédentes impliquent que, pour tout $k\in\{1,\ldots, n\}$, $f_{i+1,k}=f_{i,k-1}+\delta f_{j,k}$.
	
Maintenant, nous allons montrer que $T$ est une matrice fondamentale de solutions de $\delta X=L_{1,\delta}(z)X$. Pour cela, nous montrons d'abord que, pour chaque $j\in\{1,\ldots, n\}$, $$\omega_j=\sum_{k=1}^{j}p^{k-1}\Lambda_p(f_{1,k})\frac{(Logz)^{j-k}}{(j-k)!}$$
est une solution de $\mathcal{L}_{1,\delta}$. Notons que le vecteur $(\omega_1,\ldots,\omega_n)$ est la première ligne de la matrice $T$. On pose $\Lambda_p(Y_L)X^{\frac{1}{p}L_{\delta}(0)}=(\eta_{i,j})_{1\leq i,j\leq n}.$ Donc, pour tout $i,j\in\{1,\ldots,n\}$,  $$\eta_{i,j}=\sum_{k=1}^j\Lambda_p(f_{i,k})\frac{(Logz)^{j-k}}{p^{j-k}(j-k)!}.$$
On va voir que, pour tout $j,l\in\{1,\ldots,n\}$, $\frac{1}{p^{n-l}}\delta^{l-1}\omega_j=\frac{1}{p^{n-j}}\eta_{l,j}$. Pour montrer cette égalité, on procède par induction sur $l\in\{1,\ldots, n\}$. Pour $l=1$, il est clair que $\frac{1}{p^{n-1}}\delta^{l-1}\omega_j=\frac{1}{p^{n-j}}\eta_{1,j}$. Maintenant, on suppose que pour certain $l\in\{1,\ldots, n\}$, $\frac{1}{p^{n-l}}\delta^{l-1}\omega_j=\frac{1}{p^{n-j}}\eta_{l,j}$. Donc, $\frac{1}{p^{n-l}}\delta^{l}\omega_j=\frac{1}{p^{n-j}}\delta(\eta_{l,j})$. Mais, $\delta(\eta_{l,j})=\frac{1}{p}\eta_{l+1,j}$. En effet, comme  $\Lambda_p\circ\delta=p\delta\circ\Lambda_p$ et $f_{l,k-1}+\delta(f_{l,k})=f_{l+1,k}$ pour tout $k\in\{1,\ldots, n\}$ alors
\begin{align*}
\delta(\eta_{l,j})&=\sum_{k=1}^j\delta(\Lambda_p(f_{l,k}))\frac{(Logz)^{j-k}}{p^{j-k}(j-k)!}+\Lambda_p(f_{l,k})\frac{(Logz)^{j-k-1}}{p^{j-k}(j-k-1)!}\\
&=\sum_{k=1}^j\frac{1}{p}\Lambda_p(\delta(f_{l,k}))\frac{(Logz)^{j-k}}{p^{j-k}(j-k)!}+\frac{1}{p}\Lambda_p(f_{l,k})\frac{(Logz)^{j-k-1}}{p^{j-k-1}(j-k-1)!}\\
&=\frac{1}{p}\left[\Lambda_p(\delta(f_{l,1}))\frac{(Logz)^{j-1}}{p^{j-1}(j-1)!}+\sum_{k=2}^j\Lambda_p(f_{l,k-1}+\delta f_{l,k}))\frac{(Logz)^{j-k}}{p^{j-k}(j-k)!}\right]\\
&=\frac{1}{p}\left[\Lambda_p(f_{l+1,1})\frac{(Logz)^{j-1}}{p^{j-1}(j-1)!}+\sum_{k=2}^j\Lambda_p(f_{l+1,k})\frac{(Logz)^{j-k}}{p^{j-k}(j-k)!}\right]\\
&=\frac{1}{p}\eta_{l+1,j}.
\end{align*}
Ainsi,  $\frac{1}{p^{n-l}}\delta^{l}\omega_j=\frac{1}{p^{n-j}}\delta(\eta_{l,j})=\frac{1}{p^{n-j}}(\frac{1}{p}\eta_{l+1,j})$. D'où, $\frac{1}{p^{n-l-1}}\delta^{l}\omega_j=\frac{1}{p^{n-j}}\eta_{l+1,j}$. Pour cette raison, on conclude que,  pour tout $j,l\in\{1,\ldots,n\}$, $\frac{1}{p^{n-l}}\delta^{l-1}\omega_j=\frac{1}{p^{n-j}}\eta_{l,j}$.

Comme on a vu dans le premier pas, $\Lambda_p(Y_L)X^{\frac{1}{p}L_{\delta}(0)}$  es une matrice fondamentale de solutions de $\delta X=FX$. Donc, pour tout $j\in\{1,\ldots,n\}$, \[F\begin{pmatrix} 
\eta_{1,j}\\
\eta_{2,j}\\
\vdots\\
\eta_{n,j}\\
\end{pmatrix}
=\begin{pmatrix} 
\delta(\eta_{1,j})\\
\delta(\eta_{2,j})\\
\vdots\\
\delta(\eta_{n,j}))\\
\end{pmatrix}.
\]
Par conséquent, pour tout $j\in\{1,\ldots,n\}$, $$b_{n,1}\eta_{1,j}+b_{n,2}\eta_{2,j}+\cdots+b_{n,k}\eta_{k,j}+\cdots+b_{n,n}\eta_{n,j}=\delta(\eta_{n,j}).$$
Ainsi,  $$\frac{1}{p^{n-j}}\left[b_{n,1}\eta_{1,j}+b_{n,2}\eta_{2,j}+\cdots+b_{n,k}\eta_{k,j}+\cdots+b_{n,n}\eta_{n,j}\right]=\frac{1}{p^{n-j}}\delta(\eta_{n,j}).$$
Mais, on sait que, pour tout $l\in\{1,\ldots, n\}$, $\frac{1}{p^{n-l}}\delta^{l-1}\omega_j=\frac{1}{p^{n-j}}\eta_{l,j}$. Donc, $$\frac{1}{p^{n-1}}b_{n,1}\omega_{j}+\frac{1}{p^{n-2}}b_{n,2}\delta\omega_{j}+\cdots+\frac{1}{p^{n-k}}b_{n,k}\delta^{k-1}\omega_{j}+\cdots+b_{n,n}\delta^{n-1}\omega_{j}=\delta^n\omega_j.$$
Cela entraîne que $\omega_j$ est une solution de $\mathcal{L}_{1,\delta}$. De plus, $\omega_1,\ldots,\omega_n$ sont linéairement indépendante sur $\mathbb{C}_p$ car  $Logz$ est transcendant sur $\mathbb{C}_p[[z]]$. Puisque $L_{1,\delta}$ est la matrice compagnon de $\mathcal{L}_{1,\delta}$, il s'ensuit que,  la matrice $(\delta^{i-1}\omega_j)_{1\leq i,j\leq n}$ est une matrice fondamentale de solutions du système $\delta X=L_{1,\delta}X$. Mais, il est facile à voir que $T=(\delta^{i-1}\omega_j)_{1\leq i,j\leq n}$. Par conséquent, $T$ est une matrice fondamentale de solutions du système $\delta X=L_{1,\delta}(z)X$.

\smallskip

\textbf{Troisième pas.} \emph{L'opérateur différentiel $\mathcal{L}_{1}\in E_p[d/dz]$ est $p$-unipotent}. Nous rappelons que $\mathcal{L}_{1}$ est l'opérateur différentiel obtenu après avoir réécrit $\frac{1}{z^n}\mathcal{L}_{1,\delta}$ en termes de $d/dz$. D'après le deuxième pas,  $L_{1,\delta}(z)$ appartient à $\mathscr{M}_p$. En particulier, il existe une matrice inversible $\widehat{U}_t(z)$ dont les coefficients sont analytiques dans le disque générique ouvert $D(t,1)$, telle que $\delta\widehat{U}=L_{1,\delta}(z)\widehat{U}$. D'après la remarque~\ref{rem_g_n}, on a $L_1(z)G_{n}=\frac{d}{dz}G_{n}+G_{n}\frac{1}{z}L_{1,\delta}$. Soit  $U_t=G_n\widehat{U}_t$. Donc, $U_t$ est inversible à coefficients analytiques dans le disque générique ouvert $D(t,1)$ et $\frac{d}{dz}U=L_1(z)U$. Comme $L_1$ est la matrice compagnon de $\mathcal{L}_1$ alors $\mathcal{L}_1$ a une base de solution dans le disque générique de rayon 1. Par conséquent, grâce au théorème de Frobenius--Dwork (voir \cite[Proposition~8.1]{gillesff}), on obtient que les normes des coefficients de l'opérateur différentiel $\mathcal{L}_{1}$ sont inférieures ou égales à 1. Autrement dit, $\mathcal{L}_{1}$ est à coefficients dans $\vartheta_{E_p}$. Notons $\mathcal{L}_{1,p}$ l'opérateur obtenu après avoir réduit chaque coefficient de $\mathcal{L}_1$ modulo l'idéal maximal $\mathfrak{m}_p$ de $\vartheta_{E_p}$. Comme le rayon de convergence de $\mathcal{L}_{1}$ au point générique est égal à 1 il suit alors de la proposition~5.1 de \cite[Chap~III]{Dworkgfunciones}  que $\mathcal{L}_{1,p}$ est nilpotent. Donc, d'après le théorème de Katz--Honda (voir théorème 2.3 de \cite[Chap~III]{Dworkgfunciones}) l'opérateur $\mathcal{M}_{f,p}$ est fuchsien. Ainsi, $\mathcal{M}_f$ est $p$-unipotent. Nous avons vu dans le deuxième pas que la matrice $L_{1,\delta}(z)$ appartient à $\mathscr{M}_p$ et par conséquent, tous les exposants en zéro de $\mathcal{L}_{1}$ sont tous égaux à zéro. Ainsi,  les exposants en zéro de $\mathcal{L}_{1,p}$ sont tous égaux à zéro. Ainsi, l'opérateur $\mathcal{L}_{1,p}$ est MOM en zéro. Il suit donc que $\mathcal{L}_1$ est $p$-unipotent.

\smallskip
		
\textbf{Quatrième pas.} \textit{Les matrices $L_{\delta}(z)$ et $pL_{1,\delta}(z^p)$ sont $E_p$-équivalentes}.   Considérons la matrice \[\widetilde{H}=Y_{L}(\Lambda_p(Y_L)(z^{p}))^{-1}diag(1,p,\ldots, p^{n-1}).\]
Par le lemme~\ref{10}, la matrice $H=Y_L(\Lambda_p(Y_L)(z^p))^{-1}$ appartient à $GL_n(E_{0,p})$. Ainsi, $\widetilde{H}_1$ appartient aussi à $GL_n(E_{0,p})$.  Nous allons voir que  $$\delta(\widetilde{H})=L_{\delta}(z)\widetilde{H}-p\widetilde{H}L_{1,\delta}(z^{p}).$$
On a montré dans le deuxième pas que la matrice $T(z)$ est une matrice fondamentale de solutions de $\delta X= L_{1,\delta}(z)X$. Donc, $T(z^p)$ est une matrice fondamentale de solutions de  $\delta X= pL_{1,\delta}(z^p)X$. Par définition de la matrice $T(z)$, on a $$T(z^p)=diag(1,1/p,\ldots, 1/p^{n-1})\Lambda_p(Y_{L})(z^p)diag(1,p,\ldots, p^{n-1})X^{pL_{1,\delta}(0)}.$$
 
 Ainsi, \[\widetilde{H}T(z^p)=Y_Ldiag(1,p,\ldots, p^{n-1})X^{pL_{1,\delta}(0)}.\] 
Mais, $diag(1,p,\ldots, p^{n-1})X^{pL_{1,\delta}(0)}=X^{L_{1,\delta}(0)}diag(1,p,\ldots, p^{n-1})$ et comme $L_{1,\delta}(0)=L_{\delta}(0)$ alors, on a \[\widetilde{H}T(z^p)=Y_LX^{L_{\delta}(0)}diag(1,p,\ldots, p^{n-1}). 
\]
Puisque $Y_LX^{L_{\delta}(0)}$ est une matrice fondamentale de solutions de $\delta X=L_{\delta}(z)X$, $\widetilde{H}T(z^p)$ est aussi une matrice fondamentale de solutions de $\delta X=L_{\delta}(z)X$. Ainsi, $L_{\delta}(z)\widetilde{H}T(z^p)=\delta(\widetilde{H}T(z^p))=\delta(\widetilde{H})T(z^p)+\widetilde{H}(pL_{1,\delta}(z^p))T(z^p)$.
D'où, $$\delta(\widetilde{H})=L_{\delta}(z)\widetilde{H}-p\widetilde{H}L_{1,\delta}(z^{p}).$$

\smallskip
\textbf{Cinquième pas.} \textit{Les disques singuliers de $L_1(z)$ sont singuliers réguliers et  $L_1(z)$ a exactement $r$ disques singuliers réguliers à distance finie}.
Nous rappelons que $L_1(z)$ est la matrice compagnon de $\mathcal{L}_1$ et que $\mathcal{L}_1$ est l'opérateur différentiel obtenu après avoir réécrit $\frac{1}{z^n}\mathcal{L}_{1,\delta}$ en termes de $d/dz$. Montrons d'abord que les matrices $\frac{p}{z}L_{1,\delta}(z^p)$ et $pz^{p-1}L_1(z^p)$ sont $E_p$-équivalentes. D'après la remarque~\ref{rem_g_n}, on a $L_1(z)G_{n}=\frac{d}{dz}G_{n}+G_{n}\frac{1}{z}L_{1,\delta}$. Par conséquent, $d/dz( G_{n}(z^p))=pz^{p-1}L_1(z^p)G_n(z^p)-G_n(z^p)\frac{p}{z}L_{1,\delta}(z^p)$. Ainsi, la matrice $\frac{p}{z}L_{1,\delta}(z^p)$ et $E_p$-équivalente à la matrice $pz^{p-1}L_1(z^p)$. Le quatrième pas entraîne que $\frac{1}{z}L_{\delta}(z)$ est $E_p$-équivalente à $\frac{p}{z}L_{1,\delta}(z^p)$. Alors, par transitivité la matrice $\frac{1}{z}L_{\delta}(z)$ est $E_p$-équivalente à $pz^{p-1}L_1(z^p)$. Rappelons que $L(z)$ est la matrice compagnon de $\mathcal{L}$. À nouveau par la remarque~\ref{rem_g_n}, on a $d/dz G_{n}=L(z)G_n(z)-G_n\frac{1}{z}L_{\delta}(z)$. Donc, les matrices  $\frac{1}{z}L_{\delta}(z)$ et $L(z)$ sont $E_p$-équivalentes. Ainsi par transitivité, les matrices  $pz^{p-1}L_1(z^p)$ et $L(z)$ sont $E_p$-équivalentes. Par hypothèse tous les disques singuliers de $L(z)$ sont singuliers réguliers et $L(z)$ a exactement $r$ disques singuliers réguliers à distance finie. Comme $L(z)$ est $E_p$ équivalente à $pz^{p-1}L_1(z^p)$ alors il suit de la remarque~\ref{trans} que tous les disque singuliers de la matrice $pz^{p-1}L_1(z^p)$ sont singuliers réguliers et que cette matrice a exactement $r$ disques singuliers à distance finie. 
  
  Comme on l'a déjà montré dans le troisième pas, il existe une matrice inversible $U$, dont les coefficients sont analytiques dans le disque générique ouvert $D(t,1)$, telle que $\frac{d}{dz}U=L_{1}(z)U$. Donc, il découle du corollaire~6.4.2 et de la propositions~6.4.6 de \cite{Gillesmoduldiff} que, $D_{\gamma}$ est un disque singulier régulier de $L_1(z)$ si et seulement si $D_{\gamma}$ est un disque singulier régulier de $pz^{p-1}L_1(z^p)$. Comme on l'a vu tous les disques singuliers de $pz^{p-1}L_1(z^p)$ sont singuliers réguliers et cette matrice a exactement $r$ disques singuliers à distance finie. Par conséquent, les disques singuliers de la matrice $L_1(z)$ sont singuliers réguliers et $L_1(z)$ a exactement $r$ disques singuliers à distance finie.\\
  
Pour finir la démonstration du lemme~\ref{Deuxième pas}, montrons que l'opérateur $\mathcal{L}_1$ obtenu après avoir réécrit $1/z^n\mathcal{L}_{1,\delta}$ en termes de $d/dz$ vérifie les conditions (1)-(6) de la définition~\ref{propriedadP}. Il suit du premier pas que l'opérateur $\mathcal{L}_1$ est d'ordre $n$ et annulé par $\Lambda_p(f(z))$. D'après le deuxième pas, la matrice compagnon de $\mathcal{L}_{1,\delta}$ est dans $\mathscr{M}_p$. Il découle du troisième pas que l'opérateur $\mathcal{L}_1$ est $p$-unipotent et finalement du cinquième pas que, tous les disques singuliers de $L_1(z)$ sont singuliers réguliers et $L_1(z)$ a exactement $r$ disques singuliers réguliers à distance finie. 
\end{proof}

\section{La série hypergéométrique $_2F_1(-1/2,1/2,1;16z)$}\label{exemple}
Nous allons montrer que la série \[\mathfrak{f}_2(z):={}_2F_1(-1/2,1/2,1;16z)=\sum_{n\geq0}\frac{-1}{2n-1}\binom{2n}{n}^2z^n\] appartient à $\mathcal{L}^2(\mathcal{P})\setminus\mathcal{L}(\mathcal{P})$, où $\mathcal{P}$ est l'ensemble des nombres premiers. Remarquons que la série $\mathfrak{f}_2(z)$ satisfait aux conditions du premier point du théorème~\ref{practico}. La série $\mathfrak{f}_2(z)$ annule l'opérateur différentiel $\mathcal{L}:=z(1-16z)\frac{d}{dz^2}+(1-16z)\frac{d}{dz}+4$. Cet opérateur est fuchsien, zéro est un point singulier régulier et les exposants de $\mathcal{L}$ en zéro sont égaux à zéro. Donc, l'opérateur $\mathcal{L}$ est MOM en zéro. De plus, d'après le théorème~6.2 de \cite{vmsff}, $\mathcal{L}$ a une structure de Frobenius forte pour tout nombre premier $p>2$ et finalement, il suit du lemme~\ref{P_0} que $\mathfrak{f}_2(z)\in 1+z\mathbb{Z}[[z]]$. Considérons la série hypergéométrique $f_1(z)={}_2F_1(1/2,1/2,1,16z)=\sum\limits_{n\geq0}\binom{2n}{n}^2z^n.$

\begin{lem}\label{P_0}
	La série $\mathfrak{f}_2(z)$ appartient à $1+z\mathbb{Z}[[z]]$ et, pour tout $p\in\mathcal{P}$, $$\mathfrak{f}_{2\mid p}(z)=P_{2,p}(z)(f_{1\mid p}(z))^p\quad\text{ et }\quad\mathfrak{f}_{2\mid p}(z)=\left(\frac{P_{1,p}(z)^p}{P_{2,p}(z)^{p-1}}\right)\mathfrak{f}_{2\mid p}(z)^p,$$ où $P_{1,p}(z)$ et $P_{2,p}(z)$ sont les $p$-troncatures de $f_1(z)$ et $\mathfrak{f}_2(z)$ respectivement.
\end{lem}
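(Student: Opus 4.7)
La stratégie se déroulerait en quatre temps, avec une subtilité cruciale dans un cas dégénéré.

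D'abord, pour l'intégralité $\mathfrak{f}_2(z)\in 1+z\mathbb{Z}[[z]]$, j'utiliserais l'identité élémentaire $\frac{1}{2n-1}\binom{2n}{n}=\frac{2}{n}\binom{2n-2}{n-1}=2C_{n-1}$ pour $n\geq 1$ (où $C_k$ désigne le $k$-ième nombre de Catalan), ce qui fournit immédiatement $a(n):=-\frac{1}{2n-1}\binom{2n}{n}^2=-2C_{n-1}\binom{2n}{n}\in\mathbb{Z}$, tandis que $a(0)=1$.

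Le cœur de la preuve sera d'établir la congruence
\[
a(mp+r)\equiv a(r)\,b(m)\pmod p\quad\text{pour tout }n=mp+r,\ 0\leq r\leq p-1,\ m\geq 0,
\]
où $b(m):=\binom{2m}{m}^2$. Puisqu'en caractéristique $p$ on a $f_{1\mid p}(z^p)=(f_{1\mid p}(z))^p$, cette congruence équivaut exactement à la première identité visée $\mathfrak{f}_{2\mid p}(z)=P_{2,p}(z)(f_{1\mid p}(z))^p$. Je la prouverais en distinguant deux cas selon la valeur de $2r-1$ modulo $p$. Dans le cas générique $2r\not\equiv 1\pmod p$, le dénominateur $2(mp+r)-1\equiv 2r-1\pmod p$ est inversible ; le théorème de Lucas donne $\binom{2(mp+r)}{mp+r}\equiv\binom{2r}{r}\binom{2m}{m}\pmod p$, et la conclusion s'obtient en élevant au carré puis en divisant.

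La difficulté principale résidera dans le cas dégénéré (possible seulement pour $p$ impair, avec $r=(p+1)/2$), où $p$ divise $2r-1$. Il faudra alors montrer que $a(r)$ et $a(mp+r)$ sont tous deux nuls modulo $p$. Je ferais appel au théorème de Kummer : $v_p(\binom{2n}{n})$ est le nombre de retenues dans l'addition $n+n$ en base $p$. Le chiffre $r=(p+1)/2>p/2$ force une première retenue, et une analyse récursive des chiffres de $m$ selon $v_p(2m+1)$ (qui, lorsque $v_p(2m+1)=k$, contraint $m\equiv-1/2\pmod{p^k}$, donc les $k$ premiers chiffres de $m$ égaux à $(p-1)/2$) donne au moins $k+1$ retenues, d'où l'inégalité $v_p(\binom{2n}{n})\geq v_p(2n-1)$ ; il en résultera $v_p(a(n))=2v_p(\binom{2n}{n})-v_p(2n-1)\geq v_p(2n-1)\geq 1$. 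Le même raisonnement appliqué à $n=r$ fournit $v_p(a(r))\geq 1$.

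Pour la seconde identité, je procéderais par manipulation algébrique formelle en combinant la première avec la propriété $p$-Lucas classique de $f_1$, à savoir $f_{1\mid p}(z)=P_{1,p}(z)(f_{1\mid p}(z))^p$ (qui se démontre directement par Lucas, sans cas dégénéré puisqu'il n'y a pas ici de dénominateur). En élevant chacune des deux relations à la puissance $p$ pour exprimer $(f_{1\mid p}(z))^{p^2}$ de deux manières et en identifiant, on aboutit exactement à $\mathfrak{f}_{2\mid p}(z)=(P_{1,p}(z)^p/P_{2,p}(z)^{p-1})\,\mathfrak{f}_{2\mid p}(z)^p$. Le seul véritable obstacle aura donc été le contrôle des valuations $p$-adiques dans le cas $r=(p+1)/2$, où aucun calcul direct modulo $p$ d'expressions fractionnaires n'est licite.
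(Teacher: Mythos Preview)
Your argument is correct, but it follows a different route from the paper's own proof of this lemma.

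For the integrality of $\mathfrak{f}_2$, you and the paper proceed identically via Catalan numbers. For the second identity, you and the paper again agree: combine the first identity with the classical $p$-Lucas property of $f_1$ and eliminate $(f_{1\mid p})^{p^2}$.

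The substantive difference lies in the first identity $\mathfrak{f}_{2\mid p}=P_{2,p}\,(f_{1\mid p})^p$. You prove it \emph{coefficient by coefficient}: writing $n=mp+r$, you show $a(n)\equiv a(r)\binom{2m}{m}^2\pmod p$ via Lucas when $2r\not\equiv 1\pmod p$, and in the degenerate case $r=(p+1)/2$ you invoke Kummer's carry-counting theorem to obtain $v_p\!\left(\binom{2n}{n}\right)\ge v_p(2n-1)$, hence $p\mid a(n)$. The paper instead uses a \emph{differential-operator} trick: from $f_1=\mathfrak{f}_2-2\delta\mathfrak{f}_2$ and the ODE $z(1-16z)\mathfrak{f}_2''+(1-16z)\mathfrak{f}_2'+4\mathfrak{f}_2=0$ one derives the identity $\mathfrak{f}_2=(1-16z)(1+2\delta)f_1$, which also holds for the $p$-truncations modulo $p$; applying the operator $(1-16z)(1+2\delta)$ to the known relation $f_{1\mid p}=P_{1,p}(f_{1\mid p})^p$ and using that $\delta$ annihilates $p$-th powers in characteristic~$p$ gives $\mathfrak{f}_{2\mid p}=P_{2,p}(f_{1\mid p})^p$ in one stroke.

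Your approach is more elementary and entirely self-contained (no ODE, no differential identity), at the price of a genuine case analysis and the Kummer--valuation computation in the singular case. The paper's approach is shorter and more conceptual, and it resonates with the differential viewpoint pervading the article; it avoids any $p$-adic valuation bookkeeping but relies on having the operator identity $\mathfrak{f}_2=(1-16z)(1+2\delta)f_1$ at hand.
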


\begin{proof}
 Montrons que $(1-4z)^{-1/2}=\sum_{n\geq0}-\frac{1}{2n-1}\binom{2n}{n}z^n$ appartient à $1+z\mathbb{Z}[[z]]$.  Pour tout entier $n>0$, $\frac{1}{2n-1}\binom{2n}{n}=2C_{n-1}$, où $C_{n}:=\frac{1}{2n+1}\binom{2n+1}{n}$ est le nombre de Catalan de rang $n$. Il suit du corollaire~6.2.3 de \cite{ce} que pour tout entier $n\geq0$, $C_n\in\mathbb{N}$. Par conséquent, la série $(1-4z)^{-1/2}$ appartient à $1+z\mathbb{Z}[[z]]$. Mais il est clair que, pour tout $n\geq0$, $\binom{2n}{n}\in\mathbb{Z}$. Ainsi, pour tout $n\geq0$, $-\frac{1}{2n-1}\binom{2n}{n}^2\in\mathbb{Z}$. D'où,  $\mathfrak{f}_2(z)$ appartient à $1+z\mathbb{Z}[[z]]$. Il est bien connu que $f_{1\mid p}(z)=P_{1,p}(z)(f_{1\mid p}(z))^p$. Il est facile à vérifier que $f_{1}=\mathfrak{f}_2-2\delta\mathfrak{f}_2$. Alors, $f_1(z)+2\delta f_1(z)=f_2-4z(\mathfrak{f}'_2(z)+z\mathfrak{f}''_2(z))$. Mais, nous savons que $\frac{4}{1-16z}\mathfrak{f}_2+\mathfrak{f}'_2(z)+z\mathfrak{f}''_2(z)=0$. D'où, on obtient l'égalité suivante : 
 \begin{equation}\label{eq_p_1}
 \mathfrak{f}_{2}(z)=(1-16z)(f_1(z)+2\delta f_{1}(z))
 \end{equation}
 
 De manière similaire, on a 
 \begin{equation}\label{eq_p_2}
 P_{2,p}(z)=(1-16z)(P_{1,p}(z)+2\delta P_{1,p}(z))
 \end{equation}
 
 En appliquant l'opérateur $(1-16z)(1+2\delta)$ à $f_{1\mid p}(z)=P_{1,p}(z)(f_{1\mid p}(z))^p$, l'égalités~\eqref{eq_p_1} et \eqref{eq_p_2} impliquent que 
 \begin{equation}\label{eq_p_3}
 \mathfrak{f}_{2\mid p}(z)=P_{2,p}(z)(f_{1\mid p}(z))^p.
 \end{equation}
 Ainsi, on a 
 \begin{equation}\label{eq_p_4}
 \mathfrak{f}_{2\mid p}(z)=P_{2,p}(z)P_{1,p}(z)^pf_{1\mid p}(z)^{p^2}
 \end{equation}
 Mais, il suit de l'égalité~\eqref{eq_p_3} que $f_{1\mid p}^{p^2}=\left(\frac{ \mathfrak{f}_{2\mid p}(z)}{P_{2,p}(z)}\right)^p$. En remplaçant cette dernière égalité dans \eqref{eq_p_4}, on a $$\mathfrak{f}_{2\mid p}(z)=\left(\frac{P_{1,p}(z)^p}{P_{2,p}(z)^{p-1}}\right)\mathfrak{f}_{2\mid p}(z)^p.$$
\end{proof}

 Nous allons montrer que pour tout nombre premier $p\geq3$ les polynômes $P_{1,p}(z)$ et $P_{2,p}(z)$ sont premier entre eux.

\begin{lem}\label{2}
Pour tout nombre premier $p$, le polynôme $P_{2,p}(z)$ est séparable sur $\overline{\mathbb{F}_p}$. En particulier, $P_{2,p}$ et $P'_{2,p}$ n'ont pas de racine en commun.
\end{lem}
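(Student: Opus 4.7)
The plan is to exploit the differential equation $\mathcal{L}(\mathfrak{f}_2)=0$ with $\mathcal{L}=z(1-16z)\frac{d^2}{dz^2}+(1-16z)\frac{d}{dz}+4$, to show that $P_{2,p}$ is itself a solution of $\mathcal{L}$ modulo $p$, and then to analyze the possible multiplicity of a root of $P_{2,p}$ via the local behavior of this ODE. The small primes $p=2,3$ can be disposed of by direct computation: one checks $P_{2,2}\equiv 1$ and $P_{2,3}\equiv 1+2z\pmod 3$, which are trivially separable. Assume from now on that $p\geq 5$.

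The first step is to prove that $\mathcal{L}(P_{2,p})\equiv 0$ in $\mathbb{F}_p[z]$. The coefficients $a_n=-\binom{2n}{n}^2/(2n-1)$ satisfy the recurrence $n^2 a_n = 4(2n-1)(2n-3)a_{n-1}$ (obtained by plugging $\mathfrak{f}_2=\sum a_n z^n$ into $\mathcal{L}$), so when one expands $\mathcal{L}(P_{2,p})$, the coefficient of $z^{n-1}$ for $1\leq n\leq p-1$ equals $n^2 a_n-4(2n-1)(2n-3)a_{n-1}=0$. The only potentially non-zero coefficients are those of $z^{p-1}$ and $z^p$, both of which are scalar multiples of $a_{p-1}$. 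The theorem of Lucas applied to $2p-2=1\cdot p+(p-2)$ and $p-1=0\cdot p+(p-1)$ gives $\binom{2p-2}{p-1}\equiv\binom{1}{0}\binom{p-2}{p-1}=0\pmod p$, whence $a_{p-1}\equiv 0\pmod p$. Hence $\mathcal{L}(P_{2,p})\equiv 0$ in $\mathbb{F}_p[z]$.

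The second step is a local multiplicity argument. Suppose $\alpha\in\overline{\mathbb{F}_p}$ is a root of $P_{2,p}$ of multiplicity $m\geq 1$, and write $P_{2,p}(z)=(z-\alpha)^m Q(z)$ with $Q(\alpha)\neq 0$. Substituting this expression in $\mathcal{L}(P_{2,p})=0$ and isolating the lowest-order term in $u=z-\alpha$, one obtains the leading contribution
\[
\alpha(1-16\alpha)\,m(m-1)\,Q(\alpha)\,u^{m-2}
\]
when $\alpha\notin\{0,1/16\}$, and respectively $\pm m(m-1)Q(\alpha)u^{m-1}$ when $\alpha\in\{0,1/16\}$ (the factor $z$ or $1-16z$ contributing an extra order of $u$). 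In all three cases the vanishing of this leading term forces $m(m-1)\equiv 0\pmod p$. Since $\deg P_{2,p}\leq p-1$ we have $m\leq p-1<p$, so necessarily $m\in\{0,1\}$. Finally, $P_{2,p}(0)=a_0=1\neq 0$ excludes $\alpha=0$, so every root has multiplicity exactly one, i.e.\ $P_{2,p}$ is separable.

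The main non-routine point in this plan is the identity $\mathcal{L}(P_{2,p})\equiv 0\pmod p$, which rests on the congruence $a_{p-1}\equiv 0\pmod p$ provided by Lucas's theorem; everything else is a standard Fuchs-type local computation at a regular singular point. An alternative, more conceptual way to justify the vanishing of $a_{p-1}\bmod p$ is to observe that $\mathfrak{f}_2\in\mathbb{Z}[[z]]$ (Lemme~\ref{P_0}), so $a_p\in\mathbb{Z}$; the recurrence at $n=p$ then forces $p^2\mid a_{p-1}$ for $p\geq 5$, which suffices.
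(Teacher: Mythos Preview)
Your proof is correct and follows the same strategy as the paper's: establish that $\mathcal{L}_p(P_{2,p})=0$ and then read off the indicial constraint $m(m-1)\equiv 0\pmod p$ at a hypothetical root~$\alpha$ (with the case $\alpha=1/16$ handled separately). The only difference is in the first step: the paper obtains $\mathcal{L}_p(P_{2,p})=0$ in one line from the factorisation $\mathfrak{f}_{2\mid p}=P_{2,p}\,(f_{1\mid p})^{p}$ of Lemme~\ref{P_0} (derivatives of $p$-th powers vanish in characteristic~$p$), whereas you verify it by running the recurrence and invoking Lucas to kill $a_{p-1}\bmod p$.
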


\begin{proof}
	Pour $p=2$, on a $P_{0,2}(z)=1$. Donc, le polynôme $P_{2,2}(z)$ est séparable sur $\overline{\mathbb{F}_2}$. Soit $p$ un nombre premier différent de 2 et soit $\mathcal{L}:=z(1-16z)\frac{d}{dz^2}+(1-16z)\frac{d}{dz}+4 $. La série $\mathfrak{f}_2(z)$ annule $\mathcal{L}$. D'après le lemme~\ref{P_0}, $\mathfrak{f}_{2\mid p}(z)=P_{2,p}(z)(f_{1\mid p}(z))^p$. Par conséquent, $\mathcal{L}_p(P_{2,p}(z))=0$. Soit $\alpha_i\in\overline{\mathbb{F}_p}$ une racine de $P_{2,p}(z)$. Écrivons $P_{2,p}(z)=b_0(z-\alpha_i)^{m_i}+\cdots+b_{r}(z-\alpha_i)^{\frac{p-1}{2}}$, où $b_0\neq0$. On veux montrer que $m_i=1$. Le développement limité de $z(1-16z)$ en $z-\alpha_i$ nous donne l'égalité suivante  \[z(1-16z)=(\alpha_i-16\alpha_i^2)+(1-32\alpha_i)(z-\alpha_i)-16(z-\alpha_i)^2\]
	et le développement limitée de $1-16z$ en $z-\alpha_i$ nous donne l'égalité suivante  \[1-16z=1-16\alpha_i-16(z-\alpha_i).\] Comme $P_{2,p}(z)$ annule l'opérateur différentiel $\mathcal{L}_p$ alors
	\begin{multline}\label{eq:exp}
	[(\alpha_i-16\alpha_i^2)+(1-32\alpha_i)(z-\alpha_i)-16(z-\alpha_i)^2]P''_{0,p}(z)\\+[1-16\alpha_i-16(z-\alpha_i)]P'_{0,p}(z)
	+4P_{0,p}(z)=0.
	\end{multline}
	 Mais, $$P'_{2,p}(z)=b_0m_1(z-\alpha_i)^{m_i-1}+\cdots+b_r\frac{p-1}{2}(z-\alpha_i)^{\frac{p-1}{2}-1}$$
	 et
	 \begin{multline*}
	 P''_{2,p}(z)=b_0m_i(m_i-1)(z-\alpha_i)^{m_i-2}+b_1m_i(m_i+1)(z-\alpha_i)^{m_i-1}\\+\cdots+b_r\frac{p-1}{2}(\frac{p-1}{2}-1)(z-\alpha_i)^{\frac{p-1}{2}-2}.
	 \end{multline*}
	  
	 Donc, il suit de \eqref{eq:exp} que,
	 
	 \begin{equation}\label{eqn:4}
	 a_{m_i-2}(z-\alpha_i)^{m_i-2}
	 +a_{m_i-1}(z-\alpha_i)^{m_i-1}
	 +\cdots+a_{\frac{p-1}{2}}(z-\alpha_i)^{\frac{p-1}{2}}=0,
	 \end{equation}
	  où $$a_{m_i-2}=(\alpha_i-16\alpha_i^2)b_0m_i(m_i-1)$$ et  $$a_{m_i-1}=(\alpha_i-16\alpha_i^2)b_1m_i(m_i+1)+(1-32\alpha_i)m_i(m_i-1)b_0+(1-16\alpha_i)m_ib_0.$$
	Il suit de \eqref{eqn:4} que $a_{m_i-2}=0$ et $a_{m_i-1}=0$. Par conséquent,
	\begin{equation}\label{eq1}
	(\alpha_i-16\alpha_i^2)m_i(m_i-1)b_0=0,
	\end{equation}
	\begin{equation}\label{eq2}
	(\alpha_i-16\alpha_i^2)b_1m_i(m_i+1)+(1-32\alpha_i)m_i(m_i-1)b_0+(1-16\alpha_i)m_ib_0=0.
	\end{equation}

	Comme $m_i\neq0$, $\alpha_i\neq0$ (zéro n'est pas une racine de $P_{2,p}(z)$) et $b_0\neq0$ alors il suit de \eqref{eq1} que, $(1-16\alpha_i)(m_i-1)=0$. Si $\alpha_i\neq1/16$ alors $m_i=1$. Maintenant si $\alpha_i=1/16$ alors l'égalité~\eqref{eq2} entraîne que $-m_i(m_i-1)=0$. Par conséquent, $m_i=1$.
\end{proof}

\begin{lem}
	Pour tout nombre premier $p\geq3$ les polynômes $P_{1,p}(z)$ et $P_{2,p}(z)$ sont premier entre eux.
\end{lem}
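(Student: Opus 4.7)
Le plan consiste à établir d'abord l'identité polynomiale exacte
\begin{equation*}
P_{2,p}(z) = (1-16z)\bigl(P_{1,p}(z) + 2\delta P_{1,p}(z)\bigr) \quad \text{dans } \mathbb{F}_p[z],
\end{equation*}
puis à s'en servir, conjointement avec les équations différentielles satisfaites modulo $p$ par $P_{1,p}$ et $P_{2,p}$, pour exclure toute racine commune. Pour cette identité, je partirais de la relation $\mathfrak{f}_2 = (1-16z)(f_1 + 2\delta f_1)$ prouvée dans le lemme~\ref{P_0}. En substituant $f_{1\mid p} = P_{1,p}(f_{1\mid p})^p$ et en utilisant l'annulation $\delta(u^p) = 0$ valable en caractéristique $p$, on obtient $\mathfrak{f}_{2\mid p} = (1-16z)(P_{1,p} + 2\delta P_{1,p})(f_{1\mid p})^p$, d'où, en comparant à $\mathfrak{f}_{2\mid p} = P_{2,p}(f_{1\mid p})^p$ (lemme~\ref{P_0}) et en divisant par $(f_{1\mid p})^p$ (inversible dans $\mathbb{F}_p[[z]]$), l'identité voulue au niveau des séries formelles. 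Pour la voir polynomiale, le coefficient de $z^p$ du membre de droite vaut $-16(2p-1)\binom{2p-2}{p-1}^2$, qui s'annule modulo $p$ par le théorème de Lucas appliqué aux écritures $p$-adiques $2p-2 = 1\cdot p + (p-2)$ et $p-1 = 0\cdot p + (p-1)$.

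Soit alors $\alpha \in \overline{\mathbb{F}_p}$ une racine commune de $P_{1,p}$ et $P_{2,p}$. Puisque $P_{1,p}(0) = P_{2,p}(0) = 1$, on a $\alpha \neq 0$. Je montrerais ensuite que $\alpha \neq 1/16$: en remarquant, par le même raisonnement que pour l'identité ci-dessus (en appliquant $\mathcal{L}_{1,p}$ à $f_{1\mid p} = P_{1,p}(f_{1\mid p})^p$ et en utilisant $\delta(u^p)=0$), que $\mathcal{L}_{1,p}(P_{1,p}) = 0$, où $\mathcal{L}_1 := z(1-16z)\frac{d^2}{dz^2} + (1-32z)\frac{d}{dz} - 4$ est l'opérateur hypergéométrique annulé par $f_1$. Écrivant $P_{1,p}(z) = b_0(z-1/16)^m + \cdots$ avec $b_0 \neq 0$ et $m\geq 1$, un développement local autour de $1/16$, analogue à celui du lemme~\ref{2}, identifie le coefficient de $(z-1/16)^{m-1}$ dans $\mathcal{L}_{1,p}(P_{1,p})$ comme valant $-b_0 m^2$ (la partie quadratique de $z(1-16z)$ ne contribuant pas à ce degré), d'où $p \mid m$, ce qui contredit $1 \leq m \leq p-1$.

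Reste donc $\alpha \notin \{0, 1/16\}$. Évaluant l'identité polynomiale de la première étape en $\alpha$, on obtient $0 = P_{2,p}(\alpha) = (1-16\alpha)\cdot 2\alpha\, P_{1,p}'(\alpha)$; puisque $p\geq 3$ et que $\alpha$, $1-16\alpha$ et $2$ sont tous non nuls dans $\overline{\mathbb{F}_p}$, on en déduit $P_{1,p}'(\alpha) = 0$. Ainsi $\alpha$ serait une racine de multiplicité au moins $2$ de $P_{1,p}$. Mais l'analyse locale du lemme~\ref{2}, appliquée cette fois à $\mathcal{L}_{1,p}(P_{1,p}) = 0$ au point $\alpha$ (où $\alpha(1-16\alpha) \neq 0$), donne au terme de plus bas degré la relation $\alpha(1-16\alpha) b_0 m(m-1) = 0$, forçant $m = 1$: contradiction, ce qui achève la preuve. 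L'obstacle principal se situe dans la première étape, où il est crucial d'obtenir une identité polynomiale \emph{exacte} (et non seulement modulo $z^p$), ce qui repose de manière essentielle sur la divisibilité $p \mid \binom{2p-2}{p-1}$ fournie par Lucas.
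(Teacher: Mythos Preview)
Your proof is correct, but it takes a longer route than the paper's. The paper exploits the \emph{other} direction of the relation between $f_1$ and $\mathfrak{f}_2$: from $f_1 = \mathfrak{f}_2 - 2\delta\mathfrak{f}_2$ one gets, coefficient by coefficient, the \emph{exact} polynomial identity
\[
P_{1,p}(z) = P_{2,p}(z) - 2zP_{2,p}'(z)
\]
in $\mathbb{Z}[z]$ (hence in $\mathbb{F}_p[z]$), with no need to control a $z^p$-coefficient. If $\alpha$ were a common root, this identity would give $2\alpha P_{2,p}'(\alpha)=0$, whence $P_{2,p}'(\alpha)=0$ since $\alpha\neq 0$ and $p\neq 2$; this contradicts the separability of $P_{2,p}$ already established in Lemma~\ref{2}. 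That is the entire proof.

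You instead use the inverse identity $P_{2,p}=(1-16z)(P_{1,p}+2\delta P_{1,p})$, which forces you to (i) verify via Lucas that the $z^p$-coefficient vanishes, (ii) exclude $\alpha=1/16$ by a separate local argument, and (iii) re-prove the analogue of Lemma~\ref{2} for $P_{1,p}$ (separability away from $0$ and $1/16$) from the hypergeometric operator $\mathcal{L}_1$. All three steps are carried out correctly, and your approach has the side benefit of making the separability of $P_{1,p}$ explicit. But since Lemma~\ref{2} is already available, the paper's choice of direction yields a two-line argument where yours needs a page. The moral: when two inverse identities are available, it pays to pick the one that truncates trivially and lands on the polynomial whose separability you already know.
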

\begin{proof}
Soit $p$ un nombre premier différent de 2. Montrons que $P_{1,p}(z)$ et $P_{2,p}(z)$ n'ont pas de racine en commun. Comme $p$ est différent de 2, alors les polynômes $P_{1,p}(z)$ et $P_{2,p}(z)$ sont différents. L'égalité $P_{1,p}=P_{2,p}-2zP'_{2,p}$ entraîne tout de suite que $P_{1,p}(z)$ et $P_{2,p}(z)$ sont premier entre car, d'après le lemme~\ref{2}, $P_{2,p}$ et $P'_{2,p}$ n'ont pas de racine en commun.
\end{proof}

\begin{lem}
	Soit $\mathcal{S}$ un ensemble infini de nombres premiers. Alors la série $\mathfrak{f}_2(z)$ n'appartient pas à $\mathcal{L}(\mathcal{\mathcal{S}})$.
\end{lem}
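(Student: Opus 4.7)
Je procéderai par l'absurde. Supposons que $\mathfrak{f}_2(z)\in\mathcal{L}(\mathcal{S})$. Il existe alors une constante $C>0$ telle que, pour tout $p\in\mathcal{S}$, on dispose d'un entier $l_p>0$ et d'une fraction rationnelle $A_p(z)\in\mathbb{F}_p(z)\cap\mathbb{F}_p[[z]]$, de hauteur $\leq Cp^{l_p}$, vérifiant $\mathfrak{f}_{2\mid p}(z)=A_p(z)\,\mathfrak{f}_{2\mid p}(z^{p^{l_p}})$. Comme $\mathbb{F}_p$ est de caractéristique $p$, on a $\mathfrak{f}_{2\mid p}(z^{p^{l_p}})=\mathfrak{f}_{2\mid p}(z)^{p^{l_p}}$, et puisque $\mathfrak{f}_{2\mid p}(z)$ a pour terme constant $1$, la fraction $A_p(z)$ est \emph{uniquement déterminée} par cette égalité.

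Mon idée centrale est d'obtenir une expression explicite de $A_p(z)$ en itérant la deuxième identité du lemme~\ref{P_0}. En appliquant $l_p$ fois la relation $\mathfrak{f}_{2\mid p}(z)=\bigl(P_{1,p}(z)^p/P_{2,p}(z)^{p-1}\bigr)\mathfrak{f}_{2\mid p}(z)^p$, un calcul télescopique direct donnera
\[
\mathfrak{f}_{2\mid p}(z)\;=\;\frac{P_{1,p}(z)^{\,p(p^{l_p}-1)/(p-1)}}{P_{2,p}(z)^{\,p^{l_p}-1}}\,\mathfrak{f}_{2\mid p}(z)^{p^{l_p}},
\]
les exposants provenant des sommes $p+p^2+\cdots+p^{l_p}$ et $(p-1)(1+p+\cdots+p^{l_p-1})$. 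L'unicité mentionnée plus haut force alors $A_p(z)$ à être exactement cette fraction.

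Le deuxième ingrédient est que, pour $p\geq 3$, les polynômes $P_{1,p}$ et $P_{2,p}$ sont premiers entre eux (démontré juste avant dans le texte), de sorte que cette écriture de $A_p(z)$ est sous forme irréductible; la hauteur de $A_p(z)$ vaut donc précisément $\max\bigl(\tfrac{p(p^{l_p}-1)}{p-1}\deg P_{1,p},\;(p^{l_p}-1)\deg P_{2,p}\bigr)$. Je vérifierai par le théorème de Lucas (et en examinant le $p$-ordre du facteur $(2n-1)^{-1}$ au cran $n=(p+1)/2$ pour $P_{2,p}$) que $\deg P_{1,p}=\deg P_{2,p}=(p-1)/2$ pour tout premier $p\geq 3$; la hauteur de $A_p(z)$ vaut alors $\tfrac{p(p^{l_p}-1)}{2}$.

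Je conclurai en comparant à la borne imposée: $\tfrac{p(p^{l_p}-1)}{2}\leq Cp^{l_p}$ équivaut à $p-p^{1-l_p}\leq 2C$, ce qui entraîne $p\leq 2C+1$. Comme $\mathcal{S}$ est infini, on peut choisir $p\in\mathcal{S}$ arbitrairement grand, d'où la contradiction. La partie la plus délicate sera le calcul effectif des degrés de $P_{1,p}$ et $P_{2,p}$ — en particulier pour $P_{2,p}$, où il faut contrôler l'annulation de $(2n-1)^{-1}\binom{2n}{n}^2$ modulo $p$ au rang critique $n=(p+1)/2$ — alors que l'itération formelle et le décompte des hauteurs sont essentiellement mécaniques.
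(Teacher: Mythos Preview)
Your approach is essentially the same as the paper's: both iterate the relation $\mathfrak{f}_{2\mid p}(z)=\bigl(P_{1,p}(z)^p/P_{2,p}(z)^{p-1}\bigr)\mathfrak{f}_{2\mid p}(z)^p$ from Lemma~\ref{P_0}, use the coprimality of $P_{1,p}$ and $P_{2,p}$ to conclude the resulting fraction is reduced, and compute its height as $\tfrac{p}{2}(p^{l_p}-1)$, which cannot be bounded by $Cp^{l_p}$ for $p$ large. You are in fact slightly more thorough than the paper: you make the uniqueness of $A_p(z)$ explicit, you spell out the final inequality $p\leq 2C+1$, and you flag the verification of $\deg P_{1,p}=\deg P_{2,p}=(p-1)/2$ (the paper uses $\deg P_{1,p}=(p-1)/2$ without comment and leaves the concluding contradiction to the reader).
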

\begin{proof}

Soit $p$ un nombre premier différent de 2. Nous posons $B_{0,p}=\frac{P_{1,p}^p}{P_{2,p}^{p-1}}$ et $B_{k,p}=B_0(z)B_0(z^p)\cdots B_0(z^{p^k})$. D'après le lemme~\ref{P_0}, il découle que $\mathfrak{f}_{2\mid p}(z)=B_k(z)\mathfrak{f}_{2\mid p}(z^{p^{k+1}}).$
On vérifie facilement que $B_k=\frac{P_{1,p}^{p+p^2+\cdots+p^{k+1}}}{P_{2,p}^{p^{k+1}-1}}$. Montrons que la hauteur de $B_k$ est $\frac{p}{2}(p^{k+1}-1)$. Comme $P_{1,p}(z)$ et $P_{2,p}(z)$ sont premiers entre eux, alors la fraction $B_k$ est réduite et sa hauteur est égale à $\frac{p-1}{2}(p+p^2+\cdots+p^{k+1})=\frac{p-1}{2}p(1+p+\cdots+p^{k})=\frac{p-1}{2}\frac{p}{p-1}(p^{k+1}-1)=\frac{p}{2}(p^{k+1}-1)$. Donc, pour tout nombre premier $p\geq3$, il existe une fraction rationnelle $A_k\in\mathbb{F}_p(z)$ réduite de hauteur $\frac{p}{2}(p^{k}-1)$ telle que $\mathfrak{f}_2(z)=A_k(z)\mathfrak{f}_2(z^{p^k})$. Par conséquent, $\mathfrak{f}_2(z)\notin\mathcal{L}(\mathcal{\mathcal{S}})$.
\end{proof}

\section{Équations de Calabi-Yau}\label{eqcalabi}
Dans \cite{calabi} les auteurs donnent une liste de plus de 400 opérateurs de type Calabi-Yau. Ces opérateurs vérifient certaines conditions algébriques dont: zéro est un point singulier régulier et les exposants en z\'ero sont tous égaux à zéro et chaque opérateur admet une solution dans $\mathbb{Z}[[z]]$ dont le terme constant est \'egal \`a 1. Notamment, tous ces opérateur sont MOM en zéro. Dans la plupart des cas, la série solution de ces op\'erateurs est donn\'ee dans \cite{calabi}. Grâce à la deuxième partie du théorème~\ref{practico}, nous avons montré dans la partie~\ref{210} que la série 210 de \cite{calabi} appartient à $\mathcal{L}(\mathcal{P}\setminus\mathcal{J})$, où $\mathcal{J}$ est un ensemble fini de nombres premiers. Nous soulignons que la stratégie utilisée pour montrer que la série 210 est dans $\mathcal{L}(\mathcal{P}\setminus\mathcal{J})$ peut être employée pour montrer que d'autres séries qui apparaissent dans \cite{calabi} appartiennent à $\mathcal{L}(\mathcal{S})$, où $\mathcal{S}$ est un ensemble de nombres premiers tel que $\mathcal{P}\setminus\mathcal{S}$ est fini. Illustrons encore une fois cette stratégie avec la série 26 de \cite{calabi}. Cette série est donnée par 
$$f(z)=\sum_{j\geq0}\left(\binom{2j}{j}\left(\sum_{k=0}^j\binom{j}{k}^2\binom{j+k}{k}\binom{2k}{j}\right)\right)z^j\in 1+z\mathbb{Z}[[z]].$$
D'après le théorème~3.5 de \cite{sb}, $f(z)$ est une diagonale d'une fraction rationnelle à coefficients dans $\mathbb{Q}$, alors de \cite{picardfuchs}, la série $f(z)$ annule un opérateur différentiel $\mathcal{H}\in\mathbb{Q}(z)[d/dz]$ muni d'une structure de Frobenius forte pour presque tout $p$. De plus, comme on l'a déjà mentionné, l'opérateur annulé par $f(z)$ décrit dans \cite{calabi} est MOM en zéro. Maintenant nous utilisons le th\'eor\`eme de Lucas pour montrer que pour tout nombre premier $p$, $\Lambda_p(f)_{\mid p}(z)=f_{\mid p}(z)$. En effet, d'après le théorème de Lucas $\binom{2jp}{jp}\equiv\binom{2j}{j}\mod p.$ Écrivons $k=s+lp$ avec $0\leq l\leq j$ et $0\leq s<p-1$. Supposons $s>0$, d'après le théorème de Lucas,
\begin{footnotesize}
$$\binom{jp}{s+lp}^2\binom{(j+l)p+s}{lp+s}\binom{2lp+2s}{pj}\equiv\binom{j}{l}^2\binom{0}{s}^2\binom{j+l}{l}\binom{s}{s}\binom{2lp+2s}{pj}\mod p=0$$
\end{footnotesize} car $s>0.$ Dans le cas $s=0$, d'apr\`es le the\'or\`eme de Lucas on obtient, $$\binom{jp}{lp}^2\binom{(j+l)p}{lp}\binom{2lp}{pj}\equiv\binom{j}{l}^2\binom{j+l}{l}\binom{2l}{j}\mod p.$$
Ainsi, 
\begin{small}
\begin{align*}
&\Lambda_p(f)_{\mid p}(z)=\sum_{j\geq0}\left(\binom{2jp}{jp}\left(\sum_{k=0}^{jp}\binom{jp}{k}^2\binom{jp+k}{k}\binom{2k}{jp}\right)\mod p\right)z^j\\
&=\sum_{j\geq0}\left(\binom{2jp}{jp}\left(\sum_{s=0}^{p-1}\sum_{l=0}^{j}\binom{jp}{s+lp}^2\binom{jp+s+lp}{s+lp}\binom{2lp+2s}{jp}\right)\mod p\right)z^j\\
&=\sum_{j\geq0}\left(\binom{2jp}{jp}\left(\sum_{l=0}^{j}\binom{jp}{lp}^2\binom{jp+lp}{lp}\binom{2lp}{jp}\right)\mod p\right)z^j\\
&=\sum_{j\geq0}\left(\binom{2j}{j}\left(\sum_{l=0}^{j}\binom{j}{l}^2\binom{j+l}{l}\binom{2l}{j}\right)\mod p\right)z^j\\
&=f_{\mid p}(z).
\end{align*}
\end{small}
Donc, la deuxième partie du théorème~\ref{practico} entraîne que la série $f(z)\in\mathcal{L}(\mathcal{P}\setminus\mathcal{J})$, où $\mathcal{J}$ est un ensemble fini de nombres premiers. En appliquant la même stratégie dont nous nous sommes servis pour montrer que les séries 26 et 210 de \cite{calabi} appartiennent à $\mathcal{L}(\mathcal{S})$, où $\mathcal{S}$ est un ensemble infini de nombres premiers tel que $\mathcal{P}\setminus\mathcal{S}$ est fini, nous obtenons qu'entre les séries qui apparaissent dans \cite{calabi}, il y en a 242 qui appartiennent \`a $\mathcal{L}(\mathcal{S})$, o\`u $\mathcal{P}\setminus\mathcal{S}$ est fini. \`A savoir: 

\textit{1-16}, \textit{18-25}, \textbf{26-28}, \textit{29}, \textit{\^1-$\widehat{\textit{14}}$}, \textit{30}, \textbf{32, 33}, \textit{35-41}, \textbf{42}, \textit{43-46}, \textit{48-55},  \textit{57-60}, \textit{62-70}, \textit{75-82}, \textit{85-92}, \textit{94, 95}, \textit{99-108}, \textbf{109}, \textit{110-115}, \textit{119-122}, \textbf{123}, \textit{124-129}, \textit{131,132}, \textit{146-153}, \textbf{154}, \textit{156-158}, \textit{160-165}, \textit{185}, \textbf{186}, \textbf{189}, \textit{190-192}, \textbf{193-196}, \textit{197}, \textbf{198}, \textbf{202}, \textit{208}, \textit{209}, \textbf{210}, \textit{212}, \textbf{213-219}, \textbf{222, 223}, \textbf{226}, \textit{232}, \textbf{234, 235}, \textit{238-240}, \textit{243}, \textbf{251}, \textit{284}, \textbf{286, 287}, \textit{292}, \textbf{293, 297, 298, 300, 301, 303, 304, 306}, \textit{307}, \textbf{308-316, 318, 320-322}, \textit{323}, \textbf{328, 329, 331-335, 336}, \textit{337, 338}, \textbf{339, 341-345, 348, 349, 359}, \textit{367}, \textit{369-371}, \textbf{373}, \textit{377}, \textbf{378, 379}, \textit{380}, \textbf{381, 385}, \textbf{394-396}, \textit{398}, \textbf{399-401}.  

Les cas en italique repr\'esentent les cas montr\'es dans \cite{Borisgfonct} et les autres sont les nouveaux cas.

\section{Indépendance algébrique}\label{independaalgebrique}
Rappelons que $f_1(z),\ldots, f_r(z)\in\mathbb{Q}[[z]]$ sont \emph{algébriquement dépendantes} sur $\mathbb{Q}(z)$ s'il existe un polynôme non nul $P(x_1,\ldots,x_r)$ à coefficients dans $\mathbb{Q}(z)$ tel que $P(f_1,\ldots, f_r)=0$. Et, $f_1(z),\ldots, f_r(z)$ sont \emph{algébriquement indépendantes} sur $\mathbb{Q}(z)$ si pour tout polynôme non nul $P(x_1,\ldots, x_r)$ à coefficients dans $\mathbb{Q}(z)$, $P(f_1,\ldots, f_r)\neq0$.
Notre but dans cette partie est de démontrer les deux théorèmes suivants. 
\begin{theoreme}\label{exempleind}
Soit la famille $\mathfrak{F}=\left\{\sum_{n\geq0}\frac{-1}{2n-1}\binom{2n}{n}^{r}z^n:r\geq2\right\}.$ 
	Alors tous les éléments de  $\mathfrak{F}$ sont algébriquement indépendants sur $\mathbb{Q}(z)$.
\end{theoreme}

\begin{theoreme}\label{melange}
	Les séries
	\begin{align*}
	\mathfrak{f}_2(z)=\sum_{n\geq0}\frac{-1}{2n-1}\binom{2n}{n}^{2}z^n\quad\textup{et}\quad \mathfrak{t}(z)=\sum_{n\geq0}\left(\sum_{k=0}^{n}\binom{n}{k}^2\binom{n+k}{k}^2\right)z^n
	\end{align*}
	sont algébriquement indépendantes sur $\mathbb{Q}(z)$.
\end{theoreme}
Nous avons montré dans la partie~\ref{exemple} que la série $\mathfrak{f}_2(z)$ n'appartient pas à $\mathcal{L}(\mathcal{P})$. Donc, le critère d'indépendance algébrique donné dans \cite{Borisgfonct} ne peut pas être appliqué aux séries données dans les théorèmes~\ref{exempleind} et \ref{melange}. La démonstration de ces deux théorèmes repose sur la proposition~\ref{indalg}. De plus, celle-ci nous fournit une stratégie pour montrer l'indépendance algébrique de certaines séries dans $\mathcal{L}^2(\mathcal{S})$.

 \begin{prop}\label{indalg}
	Soient $f_1(z),\ldots, f_r(z)\in\mathcal{F}(\mathcal{S})$, où $\mathcal{S}$ est un ensemble infini de nombres premiers. Soient $g_1(z),\ldots, g_r(z)\in 1+z\mathbb{Q}[[z]]$ telles que, pour tout $p\in\mathcal{S}$, $g_i(z)\in\mathbb{Z}_{(p)}[[z]]$. Supposons que $f_1(z),\ldots, f_r(z)$ annulent chacune un opérateur différentiel MOM en zéro à coefficients dans $\mathbb{Q}(z)$ et supposons que pour tout $p\in\mathcal{S}$ et tout $i\in\{1,\ldots,r\}$, il existe une entier strictement positif $l_{i,p}$ tel que $\Lambda^{l_{i,p}}_p(f_i(z))_{\mid p}=g_{i\mid p}(z)=\Lambda^{2l_{i,p}}_p(f_{i})_{\mid p}$. Si $g_1(z),\ldots, g_r(z)$ sont algébriquement indépendantes sur $\mathbb{Q}(z)$ alors $f_1(z),\ldots, f_r(z)$ sont algébriquement indépendantes sur $\mathbb{Q}(z)$.  
\end{prop}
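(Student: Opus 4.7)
Je procède par contraposée : supposant $f_1,\ldots,f_r$ algébriquement dépendantes sur $\mathbb{Q}(z)$, le but est de prouver que $g_1,\ldots,g_r$ le sont également.
Commençons par unifier les exposants : posons $L_p:=\operatorname{lcm}(l_{1,p},\ldots,l_{r,p})$. De l'hypothèse $\Lambda_p^{l_{j,p}}(f_j)_{\mid p}=g_{j\mid p}=\Lambda_p^{2l_{j,p}}(f_j)_{\mid p}$ découle $\Lambda_p^{l_{j,p}}(g_{j\mid p})=g_{j\mid p}$, d'où $\Lambda_p^{L_p}(f_j)_{\mid p}=g_{j\mid p}$ et $\Lambda_p^{L_p}(g_{j\mid p})=g_{j\mid p}$ pour tout $j$ et tout $p\in\mathcal{S}$.

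Appliquons ensuite le lemme~\ref{ordre1} à chaque $f_j$. Le choix $i=0,\,m=L_p$ produit une fraction rationnelle $B_{j,p}\in\mathbb{F}_p(z)\cap\mathbb{F}_p[[z]]$ de hauteur $\leq Cp^{L_p}$ vérifiant
\[
f_{j\mid p}(z)=B_{j,p}(z)\,g_{j\mid p}(z)^{p^{L_p}}=B_{j,p}(z)\,g_{j\mid p}(z^{p^{L_p}}),
\]
tandis que le choix $i=m=L_p$, combiné à $\Lambda_p^{2L_p}(f_{j\mid p})=g_{j\mid p}$, fournit $g_{j\mid p}(z)=T_{j,p}(z)\,g_{j\mid p}(z^{p^{L_p}})$ avec $T_{j,p}$ de hauteur $\leq Cp^{L_p}$. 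Ce dernier point montre que $g_j\in\mathcal{L}(\mathcal{S}')$ pour un $\mathcal{S}'\subset\mathcal{S}$ cofini, ce qui rend le théorème~\ref{5.1} applicable à la famille $(g_j)$.

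Soit maintenant $P(z,x)=\sum_\alpha c_\alpha(z)\,x^\alpha\in\mathbb{Z}[z][x_1,\ldots,x_r]$ non nul annulant $(f_1,\ldots,f_r)$. Quitte à diviser par une puissance adéquate de $z$, on peut supposer $c_\alpha(0)\neq 0$ pour au moins un $\alpha$. Pour presque tout $p\in\mathcal{S}'$, la réduction $P_p$ est non nulle et conserve cette propriété. Substituant $f_{j\mid p}=B_{j,p}\,g_{j\mid p}(z^{p^{L_p}})$ dans $P_p(z,f_{1\mid p},\ldots,f_{r\mid p})=0$ on obtient, en posant $\widetilde{c}_{\alpha,p}:=c_{\alpha,p}\prod_j B_{j,p}^{\alpha_j}$ et $G_\alpha:=\prod_j g_{j\mid p}^{\alpha_j}$,
\[
\sum_\alpha \widetilde{c}_{\alpha,p}(z)\,G_\alpha(z^{p^{L_p}})=0.
\]
Appliquant $\Lambda_p^{L_p}$ et utilisant la règle $\Lambda_p^{L_p}(A(z)H(z^{p^{L_p}}))=\Lambda_p^{L_p}(A)(z)\cdot H(z)$, on aboutit à
\[
Q_p(z,g_{1\mid p},\ldots,g_{r\mid p})=0,\qquad Q_p(z,y):=\sum_\alpha q_{\alpha,p}(z)\,y^\alpha,\quad q_{\alpha,p}:=\Lambda_p^{L_p}(\widetilde{c}_{\alpha,p}).
\]
Comme $B_{j,p}(0)=1$, il vient $q_{\alpha,p}(0)=c_{\alpha,p}(0)$, donc $Q_p$ est non nul pour presque tout $p$ et son degré en $y$ est majoré par $\deg_x P$, uniformément en $p$. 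Ainsi, pour une infinité de $p\in\mathcal{S}'$, les séries $g_{1\mid p},\ldots,g_{r\mid p}$ vérifient une dépendance algébrique sur $\mathbb{F}_p(z)$ de degré en $y$ borné.

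Le dernier pas, et c'est là l'obstacle principal, consiste à remonter ces dépendances algébriques en caractéristique $p$ (de degré en $y$ uniformément borné) en une véritable dépendance algébrique de $g_1,\ldots,g_r$ sur $\mathbb{Q}(z)$. Comme $g_j\in\mathcal{L}(\mathcal{S}')$, le théorème~\ref{5.1} ramène cette dépendance algébrique à l'existence d'une relation multiplicative $\prod_j g_j^{m_j}\in\mathbb{Q}(z)$. Un argument de tiroir sur les supports (finis) des $Q_p$ permet de fixer un sous-ensemble $S$ de multi-indices commun à une infinité de $p$, et l'équation de Mahler uniforme $g_{j\mid p}=T_{j,p}(z)\,g_{j\mid p}(z^{p^{L_p}})$ (avec $T_{j,p}$ de hauteur contrôlée par $Cp^{L_p}$) fournit la rigidité nécessaire pour propager ces relations à travers les différents $p$ et en extraire, via les mêmes techniques qui sous-tendent la démonstration du théorème~\ref{5.1} dans \cite{Borisgfonct}, une relation algébrique non triviale sur $\mathbb{Q}(z)$. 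Ceci contredit l'indépendance algébrique supposée des $g_j$ et achève la démonstration.
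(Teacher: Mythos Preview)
Your sketch follows essentially the same strategy as the paper --- contrapose, reduce the dependence of the $f_j$ modulo $p$ to one among the $g_{j\mid p}$ over $\mathbb{F}_p(z)$ of bounded degree, then lift to $\mathbb{Q}(z)$ using the machinery behind \cite{Borisgfonct} --- and it is correct in outline. Two points are worth noting.

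First, your passage from the $f_{j\mid p}$ to the $g_{j\mid p}$ takes an unnecessary detour. You write $f_{j\mid p}=B_{j,p}\,g_{j\mid p}(z^{p^{L_p}})$ and then apply $\Lambda_p^{L_p}$ to the whole relation. The paper observes more directly (this is the content of the lemme~\ref{periode}) that dividing your two equations gives $f_{j\mid p}=(B_{j,p}/T_{j,p})\,g_{j\mid p}$, a relation in $\mathbb{F}_p(z)\cap\mathbb{F}_p[[z]]$. Substituting this into $P_{\mid p}(f_{1\mid p},\ldots,f_{r\mid p})=0$ immediately yields a nonzero polynomial relation of total degree $\le d$ among the $g_{j\mid p}$ over $\mathbb{F}_p(z)$, with no Cartier operator needed.

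Second, your final paragraph is where the actual content lies, and here you should not invoke le théorème~\ref{5.1} as a black box: its hypothesis is a dependence over $\mathbb{Q}(z)$, which is precisely what you are trying to establish. The paper instead unpacks the proof, applying Proposition~5.3 of \cite{Borisgfonct} in characteristic $p$ to obtain, for each relevant $p$, integers $m_1,\ldots,m_r$ with $|m_i|<d$ and a rational $r(z)$ of height $<2Crd$ such that $g_{1\mid p}^{-m_1}\cdots g_{r\mid p}^{-m_r}=r(z)$. A pigeonhole argument freezes $(m_1,\ldots,m_r)=(t_1,\ldots,t_r)$ on an infinite set of primes, so that $h:=g_1^{-t_1}\cdots g_r^{-t_r}\in\mathbb{Q}[[z]]$ has $h_{\mid p}$ rational of uniformly bounded height; Lemma~5.4 of \cite{Borisgfonct} then lifts this to $h\in\mathbb{Q}(z)$. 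Your sketch gestures at exactly this, but the paper carries it out.
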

 Sous les hypothèses de la proposition~\ref{indalg}, le théorème~\ref{practico} entraîne que pour chaque $i\in\{1,\ldots,r\}$, il existe un ensemble $\mathcal{S}_i\subset\mathcal{S}$ infini tel que $f_i(z)\in\mathcal{L}^2(\mathcal{S}_i)$ et $\mathcal{S}\setminus\mathcal{S}_i$ est fini. Donc, si $\mathcal{S}'=\mathcal{S}_1\cap\ldots\cap\mathcal{S}_r$ alors l'ensemble $\mathcal{S}'$  est infini, $\mathcal{S}\setminus\mathcal{S}'$ est fini et pour tout $i\in\{1,\ldots,r\}$, $f_i(z)\in\mathcal{L}^2(\mathcal{S}')$. Une observation importante dans la proposition~\ref{indalg} est que les séries $g_1(z),\ldots, g_r(z)$ sont dans $\mathcal{L}(\mathcal{S}_0)$, où $\mathcal{S}_0\subset\mathcal{S}'$ est infini. Cette observation est montré dans le lemme~\ref{periode} ci-dessous. Donc, la proposition~\ref{indalg} nous fournit un critère de transfert d'indépendance algébrique des séries qui appartiennent à $\mathcal{L}(\mathcal{S}_0)$ aux séries qui sont dans $\mathcal{L}^2(\mathcal{S}_0)$.

\begin{lem}\label{periode}
	Soit $\mathcal{S}$ un ensemble infini de nombres premiers et soient $f(z)\in\mathcal{F}(\mathcal{S})$ et $g(z)\in 1+z\mathbb{Q}[[z]]$ tels que, pour tout $p\in\mathcal{S}$, $g(z)\in\mathbb{Z}_{(p)}[[z]]$. Supposons que $f(z)$ annule un opérateur différentiel MOM en zéro à coefficients dans $\mathbb{Q}(z)$. Si, pour tout $p\in\mathcal{S}$, il existe un entier strictement positif $l_p$ tel que $\Lambda^{2l_p}_p(f_{\mid p}(z))=g_{\mid p}(z)=\Lambda^{l_p}_p(f_{\mid p}(z))$ alors, il existe un ensemble infini $\mathcal{S}_0\subset\mathcal{S}$ tel que l'ensemble $\mathcal{S}\setminus\mathcal{S}_0$ est fini et la série $g(z)$ appartient à $\mathcal{L}(\mathcal{S}_0)$. De plus, pour tout $p\in\mathcal{S}_0$, il existe une fraction rationnelle $B_p(z)\in\mathbb{F}_p(z)\cap\mathbb{F}_p[[z]]$ telle que $f_p(z)=B_p(z)g_{\mid p}(z)$.
\end{lem}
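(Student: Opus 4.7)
La stratégie naturelle consiste à appliquer le lemme~\ref{ordre1} à la série $f(z)$, puis à spécialiser l'identité fondamentale qu'il fournit à deux couples d'indices $(i,m)$ bien choisis, et enfin à exploiter la double hypothèse $\Lambda^{l_p}_p(f_{\mid p}) = g_{\mid p} = \Lambda^{2l_p}_p(f_{\mid p})$ pour traduire ces identités en équations fonctionnelles pour $g_{\mid p}$. Puisque $f(z)\in\mathcal{F}(\mathcal{S})$ et annule un opérateur différentiel MOM en zéro, le lemme~\ref{ordre1} produira un ensemble $\mathcal{S}_0\subset\mathcal{S}$ infini, tel que $\mathcal{S}\setminus\mathcal{S}_0$ soit fini, ainsi qu'une constante $C>0$ indépendante de $p$ telle que, pour tout $p\in\mathcal{S}_0$ et tout couple d'entiers positifs $(i,m)$, il existe $A_{p,i,m}(z)\in\mathbb{F}_p(z)\cap\mathbb{F}_p[[z]]$ de hauteur au plus $Cp^m$ vérifiant
\[
\Lambda^i_p(f_{\mid p}(z)) = A_{p,i,m}(z)\bigl(\Lambda^{i+m}_p(f_{\mid p}(z))\bigr)^{p^m}.
\]
C'est cet ensemble $\mathcal{S}_0$ que nous prendrons dans l'énoncé.

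Pour établir que $g(z)\in\mathcal{L}(\mathcal{S}_0)$, la première étape sera de spécialiser l'identité ci-dessus au couple $(i,m)=(l_p,l_p)$. L'hypothèse centrale transforme immédiatement l'égalité obtenue en $g_{\mid p}(z) = A_{p,l_p,l_p}(z)\, g_{\mid p}(z)^{p^{l_p}}$, que la caractéristique $p$ réécrit sous la forme $g_{\mid p}(z) = A_{p,l_p,l_p}(z)\, g_{\mid p}(z^{p^{l_p}})$. Comme la hauteur de $A_{p,l_p,l_p}$ est inférieure ou égale à $Cp^{l_p}$, les conditions b) et c) de la définition~\ref{pluca} seront satisfaites avec l'entier $l_p$ et la fraction $A_p := A_{p,l_p,l_p}$, tandis que la condition a) est directement incluse dans les hypothèses du lemme. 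D'où $g(z)\in\mathcal{L}(\mathcal{S}_0)$.

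Pour la seconde assertion, nous appliquerons à nouveau l'identité du lemme~\ref{ordre1}, cette fois au couple $(i,m)=(0,l_p)$, ce qui donnera $f_{\mid p}(z) = A_{p,0,l_p}(z)\, g_{\mid p}(z^{p^{l_p}})$. Il suffira alors de combiner cette égalité avec l'équation fonctionnelle précédente réécrite $g_{\mid p}(z^{p^{l_p}}) = A_{p,l_p,l_p}(z)^{-1}\, g_{\mid p}(z)$ pour obtenir $f_{\mid p}(z) = B_p(z)\, g_{\mid p}(z)$ avec $B_p(z) := A_{p,0,l_p}(z)/A_{p,l_p,l_p}(z)$. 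Le seul point technique à contrôler, et vraisemblablement le passage le plus délicat, sera l'appartenance de $B_p$ à $\mathbb{F}_p(z)\cap\mathbb{F}_p[[z]]$, autrement dit l'inversibilité de $A_{p,l_p,l_p}$ dans $\mathbb{F}_p[[z]]$. Celle-ci s'obtiendra en évaluant l'équation $g_{\mid p}(z) = A_{p,l_p,l_p}(z)\, g_{\mid p}(z)^{p^{l_p}}$ en $z=0$: comme $g(0)=1$, on aura $A_{p,l_p,l_p}(0)=1 \neq 0$, ce qui conclura la démonstration. L'argument se présente donc essentiellement comme une spécialisation habile du lemme~\ref{ordre1}, sans obstacle conceptuel supplémentaire.
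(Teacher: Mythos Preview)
Your proposal is correct and follows essentially the same argument as the paper's own proof: apply lemma~\ref{ordre1}, specialize at $(i,m)=(l_p,l_p)$ to get $g(z)\in\mathcal{L}(\mathcal{S}_0)$, specialize at $(i,m)=(0,l_p)$ and combine to obtain $B_p=A_{p,0,l_p}/A_{p,l_p,l_p}$, and check $A_{p,l_p,l_p}(0)=1$ from $g(0)=1$. The structure, the key specializations, and the treatment of the only technical point (invertibility of $A_{p,l_p,l_p}$ in $\mathbb{F}_p[[z]]$) all match.
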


\begin{proof}
	Comme $f(z)$ est dans $\mathcal{F}(\mathcal{S})$ et annule un opérateur différentiel MOM en zéro à coefficients dans $\mathbb{Q}(z)$, d'après le lemme~\ref{ordre1}, il existe un ensemble infini $\mathcal{S}_0\subset\mathcal{S}$ tel que: l'ensemble $\mathcal{S}\setminus\mathcal{S}_0$ est fini et pour tout $p\in\mathcal{S}_0$ et tout couple d'entiers positifs $(i,m)$, il existe une fraction rationnelle $A_{p,i,m}(z)\in\mathbb{F}_p(z)\cap\mathbb{F}_p[[z]]$ de hauteur inférieure ou égale à $Cp^m$ telle que $\Lambda^i_p(f_{\mid p}(z))=A_{p,i,m}(z)(\Lambda^{i+m}_p(f_{\mid p}(z)))^{p^m}$, où $C$ ne dépend pas de $p$. En particulier, pour $p\in\mathcal{S}_0$ et les entiers strictement positifs $i=l_p=m$ il existe une fraction rationnelle $A_{p,l_p,l_p}(z)\in\mathbb{F}_p(z)\cap\mathbb{F}_p[[z]]$ de hauteur inférieure ou égale à $Cp^{l_p}$ telle que $\Lambda^{l_p}_p(f_{\mid p}(z))=A_{p,l_p,l_p}(z)(\Lambda^{2l_p}_p(f_{\mid p}(z))^{p^{l_p}}$. Mais par hypothèse, $\Lambda^{2l_p}_p(f_{\mid p}(z))=g_{\mid p}(z)=\Lambda^{l_p}_p(f_{\mid p}(z))$ donc, $g_{\mid p}(z)=A_{p,l_p,l_p}(z)g_{\mid p}(z^{p^{l_p}})$. Par conséquent, la série $g(z)$ appartient à $\mathcal{L}(\mathcal{S}_0)$. Maintenant, d'après le lemme~\ref{ordre1}, pour $p\in\mathcal{S}_0$ il existe une fraction rationnelle $A_{p,0,l_p}(z)\in\mathbb{F}_p(z)\cap\mathbb{F}_p[[z]]$ de hauteur inférieure ou égale à $Cp^{l_p}$ telle que $f_{\mid p}(z)=A_{p,0,l_p}(z)(\Lambda^{l_p}_p(f_{\mid p}(z)))^{p^{l_p}}$. Mais par hypothèse, $\Lambda^{l_p}_p(f_{\mid p}(z))=g_{\mid p}(z)$ alors $f_{\mid p}(z)=A_{p,0,l_p}(z)g_{\mid p}(z^{p^{l_p}})$. Donc, $f_{\mid p}(z)=\frac{A_{p,0,l_p}(z)}{A_{p,l_p,l_p}(z)}g_{\mid p}(z)$. Le terme constant de $A_{p,l_p,l_p}(z)$ est égal à 1 car le terme constant des séries $\Lambda^{l}_p(f_{\mid p}(z))$, $\Lambda^{2l_p}_p(f_{\mid p}(z))$ est 1. Donc, $\frac{A_{p,0,l_p}(z)}{A_{p,l_p,l_p}(z)}\in\mathbb{F}_p(z)\cap\mathbb{F}_p[[z]]$.
\end{proof}

Maintenant nous appliquerons la proposition~\ref{indalg} pour montrer les théorèmes~\ref{exempleind} et \ref{melange}.
\begin{proof}[Démonstration de \ref{exempleind}]
	 Pour chaque $r\geq2$ on pose
	
	\[\mathfrak{f}_{r}=\sum_{n\geq0}\frac{-1}{2n-1}\binom{2n}{n}^{r}z^n \quad\textup{et}\quad\mathfrak{g}_{r}=\sum_{n\geq0}\binom{2n}{n}^{r}z^n.\]
	Soit $\mathcal{P}$ l'ensemble des nombres premiers. Remarquons d'abord que $\mathfrak{f}_{r}(z)$ et $\mathfrak{g}_r(z)$ appartiennent à $1+z\mathbb{Z}[[z]]$. D'après le théorème de Lucas, pour tout nombre premier $p$ et tout $r\geq2$, $\Lambda_p(\mathfrak{f}_{r}(z))_{\mid p}=\mathfrak{g}_{r\mid p}(z)=\Lambda^2_p(\mathfrak{f}_{r}(z))_{\mid p}$. Nous allons d'abord montrer que pour $r\geq2$ la série $\mathfrak{f}_{r}(z)$ est dans $\mathcal{F}(\mathcal{P}\setminus\{2\})$. La série $\mathfrak{f}_{r}(z)$ annule l'opérateur différentiel $$\mathcal{L}_{r}=\delta^{r}-4^{r}z(\delta-1/2)(\delta+1/2)^{r-1},$$ où $\delta=z\frac{d}{dz}$. Grâce au théorème~6.2 de \cite{vmsff}, l'opérateur $\mathcal{L}_{r}$ est muni d'une structure de Frobenius forte pour tout nombre premier $p$ différent de 2. Ainsi, $\mathfrak{f}_{r}(z)\in\mathcal{F}(\mathcal{P}\setminus\{2\})$. De plus, l'opérateur $\mathcal{L}_r$ est MOM en zéro. Soit $l\geq2$ et montrons que $\mathfrak{f}_{2}(z),\ldots,\mathfrak{f}_{l}(z)$ sont algébriquement indépendantes sur $\mathbb{Q}(z)$. Notons que $\mathfrak{f}_{2}(z),\ldots,\mathfrak{f}_{l}(z)\in\mathcal{F}(\mathcal{P}\setminus\{2\})$ et, pour tout $p\in\mathcal{P}\setminus\{2\}$ et tout $r\in\{2,\ldots, l\}$, on a que $\Lambda_p(\mathfrak{f}_{r}(z))_{\mid p}=\mathfrak{g}_{r\mid p}(z)=\Lambda^2_p(\mathfrak{f}_{r}(z))_{\mid p}$. Donc, d'après la proposition~\ref{indalg}, pour montrer que $\mathfrak{f}_{2}(z),\ldots,\mathfrak{f}_{l}(z)$ sont algébriquement indépendantes sur $\mathbb{Q}(z)$, il suffit de montrer que $\mathfrak{g}_{2}(z),\ldots,\mathfrak{g}_{l}(z)$ sont algébriquement indépendantes sur $\mathbb{Q}(z)$. En effet, à la suite du théorème~2.1 de \cite{Borisgfonct}, les séries $\mathfrak{g}_{1}(z),\ldots,\mathfrak{g}_{l}(z)$ sont algébriquement indépendantes sur $\mathbb{Q}(z)$. Par conséquent, pour tout $l\geq2$ les séries $\mathfrak{f}_{2}(z),\ldots,\mathfrak{f}_{l}(z)$ sont algébriquement indépendantes sur $\mathbb{Q}(z)$, d'où tous les éléments de $\mathfrak{F}$ sont algébriquement indépendants sur $\mathbb{Q}(z)$. 
\end{proof}

Avant de faire la démonstration du théorème~\ref{melange} nous montrons que $\mathfrak{t}(z)$ est dans $\mathcal{F}(\mathcal{S})$, où $\mathcal{S}$ est un ensemble infini de nombres premiers et, annule un opérateur différentiel MOM en zéro. D'après le théorème~3.5 de \cite{sb}, la série $\mathfrak{t}(z)$ est la diagonale d'une fraction rationnelle, alors de \cite{picardfuchs}, la série $\mathfrak{t}(z)$ annule un opérateur $\mathcal{H}\in\mathbb{Q}(z)[d/dz]$ muni d'une structure de Frobenius forte pour presque tout $p$. D'autre part, l'opérateur différentiel $$\mathcal{D}:(1-34z+z^2)z^2\frac{dz}{dz^3}+(3-153z+6z^2)z\frac{d}{dz^2}+(1-112z+7z^2)\frac{d}{dz}-5+z$$
est annulé par la série $\mathfrak{t}(z)$. Cet opérateur est MOM en zéro.

\begin{proof}[Démonstration du théorème \ref{melange}]
	 Considérons $\mathfrak{g}_2(z)=\sum_{n\geq0}\binom{2n}{n}^2z^n$. Donc pour tout $p\in\mathcal{P}$, $\Lambda_p(\mathfrak{f}_2(z))_{\mid p}=\mathfrak{g}_{2\mid p}(z)=\Lambda^2_p(\mathfrak{f}_2(z))_{\mid p}$ et, $\Lambda_p(\mathfrak{t}(z))_{\mid p}=\mathfrak{t}(z)=\Lambda^2_p(\mathfrak{t}(z))_{\mid p}$. Alors, d'après la proposition~\ref{indalg}, il suffit de montrer que $\mathfrak{g}_2(z)$ et $\mathfrak{t}(z)$ sont algébriquement indépendantes sur $\mathbb{Q}(z)$. Il découle du lemme~\ref{periode} qu'il existe un ensemble infini $\mathcal{S}''$ de nombres premiers tel que $\mathfrak{g}_2(z),\mathfrak{t}(z)\in\mathcal{L}(\mathcal{S}'')$. À la suite du théorème~5.1 de \cite{Borisgfonct}, si  $\mathfrak{g}_2(z)$ et $\mathfrak{t}(z)$ sont algébriquement dépendantes sur $\mathbb{Q}(z)$ il existe des entiers $a,b$ non tous nuls et une fraction rationnelle $r(z)\in\mathbb{Q}(z)$ tels que $\mathfrak{g}_2(z)^a\mathfrak{t}(z)^b=r(z).$
	Alors, 
	\begin{equation}\label{t,g}
	\mathfrak{t}(z)^b=r(z)\mathfrak{g}_2(z)^{-a}.
	\end{equation}
	Comme $\mathfrak{t}(z)$ annule l'opérateur différentiel $\mathcal{D}$ alors le rayon de convergence de $\mathfrak{t}(z)$ est égal à $\rho_{\mathfrak{t}}=17-12\sqrt{2}$ et le rayon de convergence de $\mathfrak{g}_2(z)$ est égal à $\rho_{\mathfrak{g}_2}=1/16$ car elle annule l'opérateur $z(1-16z)\frac{d}{dz^2}+(1-32z)\frac{d}{dz}-4$. Comme $\rho_{\mathfrak{t}}<\rho_{\mathfrak{g}_2}$ donc, d'après \eqref{t,g}, la série $\mathfrak{t}(z)^b$ est méromorphe au voisinage de $z_0=17-12\sqrt{2}$. Mais, il est montré dans \cite[p.~555]{Borisgfonct} que pour tout entier $c$ différent de zéro la série $\mathfrak{t}(z)^c$ n'admet pas une continuation méromorphe au voisinage de $z_0$. Par conséquent, $b=0$ et ainsi $\mathfrak{g}_2(z)^a=r(z)$. Notamment, comme $a\neq0$ alors $\mathfrak{g}_2(z)$ est algébrique sur $\mathbb{Q}(z)$. Mais, d'après l'article \cite{transcedencia} de Sharif et Woodcock, on sait que la série $\mathfrak{g}_2(z)$ est transcendante sur $\mathbb{Q}(z)$. Ce qui nous amène à une contradiction. Alors $\mathfrak{g}_2(z)$ et $\mathfrak{t}(z)$ sont algébriquement indépendantes sur $\mathbb{Q}(z)$. Ainsi, la proposition~\ref{indalg} entraîne que $\mathfrak{f}_2(z)$ et $\mathfrak{t}(z)$ sont algébriquement indépendantes sur $\mathbb{Q}(z)$.
\end{proof}
Nous finissons cette partie en démontrant la proposition~\ref{indalg}.
Cette démonstration s'appuie fortement sur les techniques développées par Adamczewski, Bell et Delaygue dans \cite{Borisgfonct}.

\begin{proof}[Démonstration de la proposition~\ref{indalg}]
	D'après le théorème~\ref{practico}, pour chaque $i\in\{1,\ldots,r\}$, la série $f_i\in\mathcal{L}^2(\mathcal{S}_i)$, où  $\mathcal{S}\setminus\mathcal{S}_i$ est fini. Soit $\mathcal{S}'=\mathcal{S}_1\cap\cdots\cap\mathcal{S}_r$, alors $\mathcal{S}'$ est infini et $f_1(z),\ldots, f_r(z)\in\mathcal{L}^2(\mathcal{S}')$. Supposons que $f_1(z),\ldots, f_r(z)$ sont algébriquement dépendantes sur $\mathbb{Q}(z)$. Donc, il existe un polynôme $P(x_1,\ldots,x_r)\in\mathbb{Q}(z)[x_1,\ldots, x_r]$ non nul  de degré total $d$ tel que $P(f_1,\ldots, f_r)=0$. 
D'après le lemme~\ref{periode}, pour chaque $i\in\{1,\ldots,r\}$, la série $g_i(z)\in\mathcal{L}(\mathcal{S}'_i)$, où $\mathcal{S}'\setminus\mathcal{S}'_i$ est fini. Soit $\mathcal{S}''=\mathcal{S}'_1\cap\cdots\cap\mathcal{S}'_r$, alors $\mathcal{S}''$ est infini et $g_1(z),\ldots, g_r(z)\in\mathcal{L}(\mathcal{S}'')$. De plus, encore par le lemme~\ref{periode}, si $p\in\mathcal{S}''$ alors, il existe une fraction rationnelle $B_i(z)\in\mathbb{F}_p(z)\cap\mathbb{F}_p[[z]]$ telle que $f_{i\mid p}(z)=B_i(z)g_{i\mid p}(z)$. 
	
Soit $\mathcal{S}^{(3)}\subset\mathcal{S}''$ tel que pour tout $p\in\mathcal{S}^{(3)}$, le polynôme $P_{\mid p}$ est non nul dans $\mathbb{F}_p[z][x_1,\ldots, x_r]$. L'ensemble $\mathcal{S}^{(3)}$ est infini. Écrivons le polynôme $P_{\mid p}=\sum_{(i_1,\ldots,i_r)\in\mathbb{N}^r}a_{(i_1,\ldots,i_r)}(z)x^{i_1}_1\cdots x^{i_r}_r$, où $a_{(i_1,\ldots,i_r)}(z)\in\mathbb{F}_p[z]$. Mais, $f_{i\mid p}(z)=B_i(z)g_{i\mid p}(z)$, où $B_i(z)\in\mathbb{F}_p(z)\cap\mathbb{F}_p[[z]]$, alors $g_{1\mid p}(z),\ldots, g_{r\mid p}(z)$ annulent le polynôme $$\sum_{(i_1,\ldots,i_r)\in\mathbb{N}^r}a_{(i_1,\ldots,i_r)}(z)B_1(z)^{i_1}\cdots B_r(z)^{i_r}x^{i_1}_1\cdots x^{i_r}_r$$
dont le degré total est inférieur ou égal à $d$ et $a_{(i_1,\ldots,i_r)}(z)B_1(z)^{i_1}\cdots B_r(z)^{i_r}\in\mathbb{F}_p(z)\cap\mathbb{F}_p[[z]]$.  Alors, pour tout $p\in\mathcal{S}^{(3)}$, les séries $g_{1\mid p}(z),\ldots, g_{r\mid p}(z)$ annulent un polynôme non nul à coefficients dans $\mathbb{F}_p(z)$ de degré total inférieur ou égal à $d$. Puisque $\mathcal{S}^{(3)}\subset\mathcal{S}''$ et $g_i(z)\in\mathcal{L}(\mathcal{S}'')$ pour tout $i\in\{1,\ldots,r\}$, alors pour tout $p\in\mathcal{S}^{(3)}$ et tout $i\in\{1,\ldots,r\}$, il existe  $A_i(z)\in\mathbb{F}_p(z)\cap\mathbb{F}_p[[z]]$ et un entier $k_i$ strictement positif tel que $g_{i\mid p}(z)=A_i(z)g_{i\mid p}(z^{p^{k_i}})$, où la hauteur de $A_i(z)$ est inférieure ou égale à $C_ip^{k_i}$. Si $k=k_1\cdots k_r$ et $C=2\max\{C_1,\ldots, C_r\}$ alors, d'après la remarque~4.2 de \cite{Borisgfonct}, pour tout $i\in\{1,\ldots,r\}$ il existe $Q_i(z)\in\mathbb{F}_p(z)\cap\mathbb{F}_p[[z]]$ tel que $g_{i\mid p}(z)=Q_i(z)g_{i\mid p}(z^{p^{k}})$, où la hauteur de $Q_i(z)$ est inférieure ou égale à $Cp^k$. Maintenant, nous allons appliquer la proposition~5.3 de \cite{Borisgfonct} à $L$ le corps de fractions de $\mathbb{F}_p[[z]]$, $M=\mathbb{F}_p(z)$ et l'endomorphisme injectif de $L$ défini par $\sigma(g(z))=g(z^{p^k})=g(z)^{p^k}$. Alors la proposition~5.3 de \cite{Borisgfonct} implique qu'il existe $m_1,\ldots, m_r\in\mathbb{Z}$ non tous nuls et $r(z)\in\mathbb{F}_p(z)$ tels que $$Q_1(z)^{m_1}\cdots Q_r(z)^{m_r}=\frac{r(z)^{p^k}}{r(z)}=r(z)^{p^k-1},$$
avec $|m_1+\cdots+m_r|<d$ et $|m_i|<d$. Comme le terme constant des $Q_i(z)$ est 1 alors $Q_i(z)^{m_i}\in\mathbb{F}_p[[z]]$ et ainsi, $r(z)\in\mathbb{F}_p[[z]]$ et $r(0)=1$. Notons que $Q_1(z),\ldots, Q_r(z)$, $r(z)$ et $m_1,\ldots, m_n$ dépendent de $p$. Comme les $m_i$ appartiennent à un ensemble fini, le principe des tiroirs implique qu'il existe $\mathcal{S}^{(4)}\subset\mathcal{S}^{(3)}$ infini et des entiers $t_1,\ldots, t_r$ tels que, pour tout $p\in\mathcal{S}^{(4)}$, $m_i=t_i$ pour $i\in\{1,\ldots,r\}$. Supposons que $p\in\mathcal{S}^{(4)}$ et écrivons $r(z)=\frac{a(z)}{b(z)}$, où $a(z),b(z)\in\mathbb{F}_p[z]$ sont premiers entre eux. Comme la hauteur de $Q_i$ est inférieure ou égale à $Cp^k$ alors la hauteur de $Q^{t_1}_1\cdots Q^{t_r}_r$ est inférieure ou égale à $Cp^k(|t_1|+\cdots+|t_r|)$. Et comme $Q^{t_1}_1\cdots Q^{t_r}_r=r(z)^{p^k-1}$
	alors la hauteur de $r(z)$ est inférieure ou égale à $$C\frac{p^k}{p^k-1}(|t_1|+\cdots+|t_r|)<2Crd.$$
	D'où les degrés de $a(z)$ et $b(z)$ sont inférieurs ou égaux à $2Crd$. On pose $$h(z)=g_1(z)^{-t_1}\cdots g_{r}(z)^{-t_r}\in\mathbb{Q}[[z]].$$ 
	Donc, pour tout $p\in\mathcal{S}^{(4)}$ on a que \begin{align*}
	h_{\mid p}(z^{p^k})=g_{1\mid p}(z)^{-t_1}\cdots g_{r\mid p}(z)^{-t_r}Q_1^{t_1}\cdots Q_r^{t_r}=h_{\mid p}(z)r(z)^{p^{k}-1}.
	\end{align*}
	Comme $h(0)=1$ alors $h_{\mid p}(z)$ est non nul et ainsi $h_{\mid p}(z)^{p^k-1}=r(z)^{p^{k}-1}$. Par conséquent, $\frac{r(z)}{h_{\mid p}(z)}$ est solution du polynôme $x^{p^k-1}-1$. Comme $r(0)=1$ et $h_{\mid p}(0)=1$ alors $\frac{r(z)}{h_{\mid p}(z)}=1$. Maintenant, on sait que pour tout $p\in\mathcal{S}^{(4)}$, $a(z)=a_0+a_1z+\cdots+a_iz^i$ et $b(z)=b_0+b_1z\cdots+b_jz^j$, où $0\leq i,j\leq 2Crd$. Alors, $$\sum_{j=0}^{2Crd}b_jz^jh_{\mid p}(z)-\sum_{i=0}^{2Crd}a_iz^i=0.$$ Par conséquent, $h_{\mid p}, zh_{\mid p}(z),\ldots, z^{2Crd}h_{\mid p}, z,\ldots, z^{2Crd}$ sont linéairement dépendantes sur $\mathbb{F}_p$. Comme $\mathcal{S}^{(4)}$ est infini, d'après le lemme~5.4 de \cite{Borisgfonct}, les séries $zh(z),\ldots, z^{2Crd}h(z), z,\ldots, z^{2Crd}$ sont linéairement dépendantes sur $\mathbb{Q}$, d'où $h(z)\in\mathbb{Q}(z)$. Alors, les séries $g_1(z),\ldots, g_r(z)$ sont algébriquement dépendantes sur $\mathbb{Q}(z)$.
\end{proof}

 %Si $a>0$ alors la série $\mathfrak{g}_2(16z)=_2F_1(1/2,1/2,1;z)$ est solution de l'opérateur différentiel $$\frac{d}{dz}-\frac{1}{a}\frac{b'(z)}{b(z)}.$$
 %Si $a<0$ alors $\mathfrak{g}_2(16z)^{-a}=c(z)$, où $c(z)=\frac{1}{b(z)}$. Alors la série hypergéométrique $\mathfrak{g}_2(16z)=_2F_1(1/2,1/2,1;z)$ annule l'opérateur différentiel $$\frac{d}{dz}+\frac{1}{a}\frac{c'(z)}{c(z)}.$$
 %Dans le deux cas la série hypergéométrique $_2F_1(1/2,1/2,1;z)$ annule un opérateur d'ordre 1 à coefficients dans $\mathbb{Q}(z)[d/dz]$ ce qui amène a une contradiction, car cette série annule l'opérateur différentiel $$z(1-z)\frac{d}{dz^2}+(1-2z)\frac{d}{dz}-\frac{1}{4}$$
 %qui est irréductible sur $\mathbb{Q}(z)$. Par conséquent, $a=0$ et $b=0$, contradiction. Alors $\mathfrak{g}_2(z)$ et $\mathfrak{t}(z)$ sont algébriquement indépendantes sur $\mathbb{Q}(z)$ et ainsi la partie ii) du théorème \ref{independece} entraîne que $\mathfrak{f}_2(z)$ et $\mathfrak{t}(z)$ sont algébriquement indépendantes sur $\mathbb{Q}(z)$.

\end{document}